\newtheorem{theorem}{Theorem}[section]
\newtheorem{corollary}[theorem]{Corollary}
\newtheorem{lemma}[theorem]{Lemma}
\newtheorem{proposition}[theorem]{Proposition}
\theoremstyle{definition}
\newtheorem{definition}[theorem]{Definition}
\newtheorem{remark}[theorem]{Remark}
\newcommand{\R}{\mathbb{R}}
\newcommand{\Z}{\mathbb{Z}}
\newcommand{\N}{\mathbb{N}}
\newcommand{\C}{\mathbb{C}}
\begin{document}

\title[Strichartz estimates for Lipschitz coefficients]{Strichartz estimates for equations with structured Lipschitz coefficients}

\author{Dorothee Frey \and
        Robert Schippa
}

\address{Fakult\"at f\"ur Mathematik, Karlsruher Institut f\"ur Technologie,
Englerstrasse 2, 76131 Karlsruhe, Germany}
\email{dorothee.frey@kit.edu}
\email{robert.schippa@kit.edu}

\subjclass[2020]{35B45, 35L05, 42B37}
\keywords{Strichartz estimates, wave equation, Schr\"odinger equation, Phillips functional calculus}

\maketitle

\begin{abstract}
Sharp Strichartz estimates are proved for Schr\"odinger and wave equations with Lipschitz coefficients satisfying additional structural assumptions. We use Phillips functional calculus as a substitute for Fourier inversion, which shows how dispersive properties are inherited from the constant coefficient case. Global Strichartz estimates follow provided that the derivatives of the coefficients are integrable. The estimates extend to structured coefficients of bounded variations. As applications we derive Strichartz estimates with additional derivative loss for wave equations with H\"older-continuous coefficients and solve nonlinear Schr\"odinger equations. Finally, we record spectral multiplier estimates, which follow from the Strichartz estimates by well-known means.

\end{abstract}

\section{Introduction and Main results}
In the following we show Strichartz estimates for Schr\"odinger and (half-)wave equations with time-independent Lipschitz coefficients under additional structural assumptions. Let $d \geq 1$, and $a_i \in C^{0,1}(\R)$ satisfy an ellipticity condition for $i=1,\ldots,2d$:
\begin{equation}
\label{eq:EllipticityIntroduction}
\exists \lambda, \Lambda > 0: \, \forall x \in \R: \; \lambda \leq a_i(x) \leq \Lambda, \quad i \in \{1,\ldots,2d\}.
\end{equation}
 We consider the Dirac operators
\begin{equation*}
iD_{a_j} = 
\begin{pmatrix}
0 & - i a_{j+d}(x_j) \partial_j ( \cdot) \\
i a_{j}(x_j) \partial_j & 0
\end{pmatrix}
.
\end{equation*}
We further write 
\begin{equation*}
L = D_a^2 = \sum_{j=1}^d D_{a_j}^2 = 
\begin{pmatrix}
- \sum_{j=1}^d a_{j+d}(x_j) \partial_j (a_j(x_j)  \partial_j) & 0 \\
0 & - \sum_{j=1}^d a_{j}(x_j) \partial_j (a_{j+d}(x_j) \partial_j)
\end{pmatrix}
,
\end{equation*}
$|D_L| = L^{\frac{1}{2}}$, and $|D|= |\nabla|$. We consider the homogeneous Schr\"odinger
\begin{equation}
\label{eq:SEQ}
\left\{ \begin{array}{cl}
i \partial_t u + L u &= 0, \quad (t,x) \in \R \times \R^d, \\
u(0) &= u_0 \in H^s(\R^d;\C^2)
\end{array} \right.
\end{equation}
and half-wave equation
\begin{equation}
\label{eq:HalfWaveEq}
\left\{ \begin{array}{cl}
i \partial_t u + L^{\frac{1}{2}} u &= 0, \quad (t,x) \in \R \times \R^d, \\
u(0) &= u_0 \in H^s(\R^d;\C^2).
\end{array} \right.
\end{equation}

The homogeneous Strichartz estimates quantify dispersive effects by estimates
\begin{equation}
\label{eq:HomogeneousStrichartz}
\| u \|_{L^p([0,T],L^q(\R^d))} \lesssim_{T,d,p,q,s} \| u_0 \|_{H^s(\R^d)}
\end{equation}
for solutions to \eqref{eq:SEQ} or \eqref{eq:HalfWaveEq}. In the constant-coefficient case, i.e., $a_i = 1$ for $i=1,\ldots,2d$ global Strichartz estimates
\begin{equation*}
\| u \|_{L^p(\R, L^q(\R^d))} \lesssim_{d,p,q} \| u_0 \|_{\dot{H}^s(\R^d)}
\end{equation*}
hold true with $s$ determined by scaling, see below. 
Our first result are local-in-time Strichartz estimates for $L$ and $L^{\frac{1}{2}}$.

\begin{theorem}
\label{thm:StrichartzEstimate}
Let $d \geq 1$, $2 \leq p \leq \infty$, $2 \leq q < \infty$, $s_\ell = d \big(\frac{1}{2} - \frac{1}{q} \big) - \frac{\ell}{p} $, $\ell \in \{1,2 \}$. Suppose that $a_i \in C^{0,1}(\R)$, $i=1,\ldots,2d$ satisfy \eqref{eq:EllipticityIntroduction}. Then, we find the half-wave Strichartz estimate to hold
\begin{equation}
\label{eq:WaveStrichartz}
\| |D|^{-s_1} e^{it L^{\frac{1}{2}}} u_0 \|_{L^p([0,T],L^q(\R^d))} \lesssim \mu^{\frac{1}{p}} \| u_0 \|_{L^2(\R^d)}
\end{equation}
with $\mu = T \max_{i} \| a_i \|_{\dot{C}^{0,1}} \geq 1$ provided that $\frac{2}{p} + \frac{d-1}{q} = \frac{d-1}{2}$.\\ 
Furthermore, the Schr\"odinger Strichartz estimate holds true
\begin{equation}
\label{eq:SEQStrichartz}
\| |D|^{-s_2} e^{it L} u_0 \|_{L^p([0,T],L^q(\R^d))} \lesssim \mu^{\frac{1}{p}} \| u_0 \|_{H^{\frac{1}{p}}(\R^d)}
\end{equation}
with $\mu = T \max_{i} \| a_i \|^2_{\dot{C}^{0,1}} \geq 1$ provided that $\frac{2}{p} + \frac{d}{q} = \frac{d}{2}$.
\end{theorem}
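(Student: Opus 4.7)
My plan is to exploit the structural assumption, under which $L$ is a sum of pairwise commuting one-dimensional operators $L_j = D_{a_j}^2$ acting only in the variable $x_j$. The Schr\"odinger group then tensorizes, $e^{itL}=\prod_{j=1}^d e^{itL_j}$, whereas the half-wave group $e^{itL^{1/2}}$ does not, and it is precisely for this second case that Phillips functional calculus enters as a substitute for Fourier inversion.

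\textbf{One-dimensional building block.} Each $L_j$ is block-diagonal with entries of Sturm--Liouville type, $-a_{j+d}(x_j)\partial_j(a_j(x_j)\partial_j\,\cdot)$ and its companion. A bi-Lipschitz change of variables in Liouville normal form conjugates such an operator to $-\partial_y^2$ up to a zeroth order remainder whose $L^\infty$ norm is controlled by $\max_i\|a_i\|_{\dot C^{0,1}}$. The constant-coefficient one-dimensional Strichartz estimates therefore transfer to $L_j$ with a constant depending only on $\lambda,\Lambda$, and a Duhamel iteration against the remainder on a time interval of length $T$ produces the $\mu^{1/p}$ loss (with one Lipschitz power for the half-wave and two for Schr\"odinger, reflecting the order of $L^{1/2}$ versus $L$); the $1/p$ derivative loss on the right-hand side of \eqref{eq:SEQStrichartz} comes from absorbing the lower-order remainder in the Sobolev norm.

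\textbf{From one to $d$ dimensions.} For \eqref{eq:SEQStrichartz}, commutativity gives $e^{itL}u_0=\bigotimes_j e^{itL_j}u_0$, and standard tensorization lifts the one-dimensional dispersive decay to $d$ dimensions, after which the Keel--Tao framework delivers all admissible pairs; admissibility $2/p+d/q=d/2$ forces $s_2=0$, so the only loss is the $H^{1/p}$ norm, matched by the $\mu^{1/p}$ factor. For \eqref{eq:WaveStrichartz}, I would first Littlewood--Paley-localize at frequency $\lambda$ and write the half-wave piece as a superposition of Schr\"odinger-type evolutions,
\[
\psi_\lambda(L)\,e^{itL^{1/2}} \;=\; \int m_{t,\lambda}(\tau)\,\psi_\lambda(L)\,e^{i\tau L}\,d\tau,
\]
where $m_{t,\lambda}$ is the kernel dictated by the flat-case identity $e^{it|\xi|}=\int m_{t,\lambda}(\tau)e^{i\tau|\xi|^2}d\tau$ on the shell $|\xi|\sim\lambda$. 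This is the Phillips representation announced in the abstract. Each factor $e^{i\tau L}$ tensorizes and is estimated by the Schr\"odinger part, and reassembling frequencies through the bounded $H^\infty$-calculus of $L$ (equivalently, Littlewood--Paley square functions inherited from the one-dimensional theory) yields \eqref{eq:WaveStrichartz} with the advertised $\mu^{1/p}$ loss and fractional integration $|D|^{-s_1}$.

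The chief obstacle I anticipate is controlling the Phillips kernel $m_{t,\lambda}$ so that it reproduces the sharp $(d-1)/2$ dispersive decay of the constant-coefficient wave propagator uniformly in $\lambda$, together with the replacement of the flat $|D|^{-s_1}$ by $L^{-s_1/2}$, which requires comparing the $H^\infty$-calculi of $L$ and $-\Delta$. Both points rely crucially on the product structure to reduce the analysis to one dimension, where the bi-Lipschitz change of variables of the first step keeps the comparison tight.
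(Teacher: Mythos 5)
Your high-level scaffolding (exploit the product structure, Littlewood--Paley localize and rescale to unit frequency, feed a unit-frequency dispersive estimate into Keel--Tao, count time intervals to pick up $\mu^{1/p}$) matches the paper's, but the central mechanism you propose for the unit-frequency dispersive estimate is not the one the paper uses, and as described it has two genuine gaps.

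First, your one-dimensional building block does not go through at Lipschitz regularity. You propose to conjugate the second-order operators $L_j$ (of the form $-a_{j+d}\partial_j(a_j\partial_j\cdot)$) to $-\partial_y^2$ ``up to a zeroth-order remainder whose $L^\infty$ norm is controlled by $\max_i\|a_i\|_{\dot C^{0,1}}$.'' The Liouville normal form requires both a change of variables $y=\phi(x)$ and a multiplicative gauge $v=g(x)u$, and the resulting potential involves \emph{second} derivatives of the coefficients. For $a_i\in C^{0,1}$ these second derivatives are only distributions; there is no $L^\infty$-bounded zeroth-order remainder. This is exactly the obstruction the paper circumvents by working with the \emph{first-order} Dirac operator $D_a$: the conjugation in Proposition \ref{prop:TransportGroupPerturbation} produces an error matrix $E$ containing only first derivatives, hence $\|E\|_{L^p\to L^p}\lesssim\|a\|_{\dot C^{0,1}}$ as in \eqref{eq:LpBoundE}. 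Without that first-order factorization your 1D Schr\"odinger step, and hence your tensorized $d$-dimensional Schr\"odinger estimate, is unsupported.

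Second, what you describe as ``the Phillips representation announced in the abstract'' is in fact a subordination formula that writes the half-wave group as a superposition of Schr\"odinger groups. This is a different and more fragile device than the paper's Phillips functional calculus, which instead writes
$e^{itL^{\ell/2}}\psi(\sqrt{D_a^2})u_0 = \int K_t(\xi)\,e^{i\xi.D_a}u_0\,d\xi$, i.e.\ a superposition of the \emph{transport-type} group $e^{i\xi.D_a}$, uniformly for both $\ell=1$ and $\ell=2$. The whole constant-coefficient kernel $K_t$ (wave or Schr\"odinger) is thereby decoupled from the variable-coefficient operator, and the problem collapses to the transport estimate $\int|e^{i\xi.D_a}u_0(x)|\,d\xi\lesssim\|u_0\|_{L^1}$, which is exactly where the first-order structure and the bound on $E$ pay off. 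Under your subordination route you would need uniform control of a kernel $m_{t,\lambda}$ \emph{and} a prior Schr\"odinger dispersive estimate for the variable-coefficient $L$, which you have not established (see the first gap), before you can say anything about the wave side; you are also trying to produce the better-decaying and loss-free wave estimate from the Schr\"odinger one, whose frequency-localized version carries the extra $2^{k/p}$ factor. Relatedly, your explanation that the $H^{1/p}$ loss comes from ``absorbing the lower-order remainder'' is not correct: the $\langle D\rangle^{1/p}$ loss for $\ell=2$ arises because after rescaling to unit frequency the dispersive estimate is good only on a time window of length $\sim 2^k$, and covering $[0,T2^{2k}]$ with such windows costs a factor $(T2^{k})^{1/p}$, whereas for $\ell=1$ the rescaled interval $[0,T2^{k}]$ needs only $T$ windows.

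In short, you have the right outer shell but are missing the key idea that makes Lipschitz regularity sufficient: diagonalize the first-order Dirac system, not the second-order operator, and run the Phillips calculus against the transport group $e^{i\xi.D_a}$ rather than against Schr\"odinger flows. That is what lets the paper prove a single transport estimate and reuse it, via the constant-coefficient kernels $K_t$, for both $e^{itL}$ and $e^{itL^{1/2}}$.
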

\begin{remark}
For $q=\infty$, the above estimates remain true after changing to Besov norms $\dot{B}^{-s_\ell,q}_2$, except at the double endpoints $(p,q,d) \neq (2,\infty,2)$ for the Schr\"odinger equation or $(p,q,d) \neq (2,\infty,3)$ for the wave equation.
\end{remark}

If the coefficients have integrable derivatives with small $L^1$-norm, we can show global estimates:
\begin{theorem}
\label{thm:GlobalEstimates}
Let $d \geq 1$, $2 \leq p,q \leq \infty$, $s_{\ell} = d \big( \frac{1}{2} - \frac{1}{q} \big) - \frac{\ell}{p}$, $\ell \in \{1,2\}$, and $a_i \in C^{0,1}(\R)$, $i=1,\ldots,d$ satisfy \eqref{eq:EllipticityIntroduction}. Let $a_{i} = 1$ for $i=d+1,\ldots,2d$. Suppose that $Var(\log( (a_i )) < 2 \pi$. Then, we find the following estimate to hold
\begin{equation}
\label{eq:GlobalWaveStrichartz}
\| |D|^{-s_1} e^{it L^{\frac{1}{2}}} u_0 \|_{L^p(\R;L^q(\R^d))} \lesssim \| u_0 \|_{L^2(\R^d)}
\end{equation}
provided that $\frac{2}{p} + \frac{d-1}{q} = \frac{d-1}{2}$ and $(p,q,d) \neq (2,\infty,3)$.

Furthermore, the Schr\"odinger Strichartz estimate holds true
\begin{equation}
\label{eq:GlobalSEQStrichartz}
\| |D|^{-s_2}  e^{it L} u_0 \|_{L^p(\R;L^q(\R^d))} \lesssim \| u_0 \|_{L^2(\R^d)}
\end{equation}
provided that $\frac{2}{p} + \frac{d}{q} = \frac{d}{2}$ and $(p,q,d) \neq (2,\infty,2)$.
\end{theorem}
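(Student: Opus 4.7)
The plan is to upgrade the local-in-time estimates of Theorem~\ref{thm:StrichartzEstimate} to global ones by combining the tensor structure of $L = \sum_{j=1}^d D_{a_j}^2$ with a one-dimensional analysis that exploits $\mathrm{Var}(\log a_i) < 2\pi$. With $a_{j+d} = 1$, each diagonal entry of $L$ reduces to a one-dimensional operator of the form $-\partial_j(a_j\partial_j)$ or $-a_j\partial_j^2$, and the small-variation hypothesis is precisely what allows one to construct a bounded generalized Fourier transform conjugating these operators to the flat Laplacian on the whole line.

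First I would decompose by direction. Since the $D_{a_j}$ depend only on $x_j$ they commute pairwise and admit a joint spectral calculus. For the Schr\"odinger propagator this yields the tensor factorization $e^{itL} = \prod_{j=1}^d e^{itD_{a_j}^2}$, which combined with Minkowski's inequality and a Keel--Tao style tensorization reduces the $L^p_tL^q_x$-estimate on $\R^{1+d}$ to a product of one-dimensional Strichartz estimates in the $x_j$ variable. For the half-wave case $L^{1/2}$ does not factor in this way, so here I would invoke Phillips functional calculus -- the substitute for Fourier inversion announced in the abstract -- to represent $e^{itL^{1/2}}$ as a superposition of operators that do respect the tensor structure, for instance via subordination relating the Poisson-type semigroup $e^{-sL^{1/2}}$ to the heat semigroup $e^{-sL}$, thereby again reducing matters to one-dimensional dispersive bounds.

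Next I would carry out the one-dimensional analysis under $\mathrm{Var}(\log a_j) < 2\pi$. The idea is to build generalized eigenfunctions $\phi_\xi$ of $-\partial(a_j\partial)$ of WKB type, namely $\phi_\xi(x) \sim m(x,\xi)\,e^{i\xi y(x)}$ with $y(x) = \int_0^x a_j(s)^{-1/2}\,ds$ and a bounded amplitude $m$. The constant $2\pi$ then enters as the sharp Beurling--Helson/Wiener-algebra threshold ensuring that the phase substitution acts as a uniformly bounded $L^q$-Fourier multiplier, so that the associated generalized Fourier transform is $L^q$-bounded. This transfers the one-dimensional global Strichartz estimates of the constant-coefficient Schr\"odinger and wave equations to the variable-coefficient setting with an $O(1)$ constant, rather than the $\mu^{1/p}$ constant of Theorem~\ref{thm:StrichartzEstimate}. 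Feeding this back into the tensor decomposition of the previous step produces the global estimates on $\R^d$.

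The hardest part is expected to be the half-wave equation. $L^{1/2}$ lacks the clean tensor factorization of $L$, so the Phillips representation must be controlled directly in mixed $L^p_tL^q_x$-norms rather than pointwise; the excluded endpoints $(p,q,d)=(2,\infty,3)$ for the wave and $(2,\infty,2)$ for the Schr\"odinger equation should trace back to the classical failure of the Keel--Tao endpoint after tensorization. A secondary difficulty is to check that the Phillips representation converges absolutely in the relevant norm, which is where the $2\pi$ smallness of $\mathrm{Var}(\log a_i)$ is used quantitatively rather than qualitatively. Once these are settled, the remainder of the argument is a routine combination of one-dimensional Strichartz estimates with the direction-by-direction decomposition.
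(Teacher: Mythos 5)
Your plan correctly identifies two of the paper's central themes -- reduction to one dimension via the tensor structure and the small-variation threshold $\mathrm{Var}(\log a_i)<2\pi$, and Phillips calculus as a substitute for Fourier inversion -- but the implementation has several genuine gaps, and the route through which you combine the pieces would not close.

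First, the tensor factorization is applied at the wrong level. It is true that $e^{itL}=\prod_{j=1}^d e^{itD_{a_j}^2}$ as commuting one-dimensional operators, and this is exploited in the paper, but Minkowski's inequality does not reduce a $d$-dimensional Strichartz estimate on $L^p_tL^q_x(\R\times\R^d)$ to a product of one-dimensional Strichartz estimates: the time variable is shared between all factors, and the admissibility relation $\tfrac{2}{p}+\tfrac{d}{q}=\tfrac{d}{2}$ is incompatible with iterating $\tfrac{2}{p}+\tfrac{1}{q}=\tfrac{1}{2}$ coordinate by coordinate. What does tensorize is the \emph{dispersive} estimate $L^1\to L^\infty$: the paper reduces (Proposition~\ref{prop:GlobalDispersiveEstimate}) to the transport bound $\int_{\R^d}|e^{i\xi.D_a}f(x)|\,d\xi\lesssim\|f\|_{L^1}$, which by commutativity and Fubini iterates over the coordinate directions, and then feeds this pointwise bound into Keel--Tao. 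Your proposal never arrives at a dispersive estimate; instead you express an intention to ``control the Phillips representation directly in mixed $L^p_tL^q_x$-norms rather than pointwise,'' which is a substantially harder problem and not what is done here.

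Second, the mechanism you propose for the half-wave case does not work. There is no subordination formula expressing the unitary group $e^{itL^{1/2}}$ in terms of $e^{isL}$; subordination relates the Poisson semigroup $e^{-sL^{1/2}}$ to the heat semigroup $e^{-tL}$, and the analytic continuation that would be needed to pass to the oscillatory groups is exactly the hard content of the theorem. The paper instead exploits the Phillips representation
$\psi(\sqrt{D_a^2})e^{itL^{\ell/2}}u_0=\int K_t(\xi)\,e^{i\xi.D_a}u_0\,d\xi$
together with the oscillatory-integral bounds on the free unit-frequency kernel $K_t$ from \eqref{eq:UnitFrequencyKernel} (decay $(1+|t|)^{-\sigma(\ell)}$ on $|\xi|\sim|t|$, rapid decay elsewhere): all the $t$-decay comes from the constant-coefficient oscillatory kernel, and the variable coefficients only enter through the $\xi$-integral of $|e^{i\xi.D_a}u_0|$. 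This kernel estimate is absent from your sketch; without it, the decay rate $(1+|t|)^{-\sigma(\ell)}$ has no source.

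Third, the one-dimensional input is misattributed. The paper does not construct generalized WKB eigenfunctions nor appeal to a Beurling--Helson threshold; it cites the theorem of Beli--Ignat--Zuazua (Theorem~\ref{thm:GlobalDispersionWaveEquation}), which establishes $\sup_x\int_\R|u(t,x)|\,dt\lesssim\|u_0\|_{L^1}$ for the 1D wave cosine group under $\mathrm{Var}(\log a)<2\pi$. Their $2\pi$ threshold is obtained from an iteration of reflection/transmission coefficients across jump discontinuities of a BV metric, an argument genuinely different from a Wiener-algebra composition bound; moreover a WKB amplitude $m(x,\xi)e^{i\xi\int_0^x a^{-1/2}}$ presupposes coefficient regularity well beyond $BV$. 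If you wanted a self-contained 1D proof, this step would need to be supplied in full, whereas the paper uses it as a black box that plugs directly into the transport-property reduction. Once these three gaps are repaired, the remaining steps (Littlewood--Paley at the level of $|D_L|$ for the wave case, the scale invariance $\|\partial_x a_i(2^{-k}\cdot)\|_{L^1(\R)}=\|\partial_x a_i\|_{L^1(\R)}$, and the Keel--Tao machinery) do fall into place as you suggest.
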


Recall that $p \geq 2$ and $q \geq 2$ are necessary due to convolution structure. The Knapp counterexample for constant coefficients gives the necessary conditions for the integrability indices. For the Schr\"odinger equation, this reads
\begin{equation}
\label{eq:SEQStrichartzPairs}
\frac{2}{p} + \frac{d}{q} \leq \frac{d}{2},
\end{equation}
and for the half-wave equation, this is
\begin{equation}
\label{eq:WEStrichartz}
\frac{2}{p} + \frac{d-1}{q} \leq \frac{d-1}{2}.
\end{equation}
Estimates \eqref{eq:WaveStrichartz}-\eqref{eq:GlobalSEQStrichartz} follow for the strict inequalities in \eqref{eq:SEQStrichartzPairs} or \eqref{eq:WEStrichartz}, respectively, by Sobolev embedding for $q \neq \infty$.
The double endpoints $(p,q) = (2,\infty)$ in two dimensions for the Schr\"odinger equation and three dimensions for the wave equation are ruled out by more sophisticated counterexamples due to Montgomery--Smith \cite{MontgomerySmith1998} and E. Stein (cf. \cite[p.~81]{Tao2006}), respectively. Tupels $(s,p,q,d)$, for which the necessary conditions hold, will be referred to as Schr\"odinger or wave Strichartz pairs, respectively. If equality holds in \eqref{eq:SEQStrichartzPairs} or \eqref{eq:WEStrichartz}, the pairs are referred to as sharp.

 Clearly, on a finite time interval we can use H\"older in time and Bernstein's inequality to estimate low frequencies. Hence, on a finite time interval we can as well consider inhomogeneous Sobolev spaces. The estimates are named after Strichartz's pioneering work \cite{Strichartz1977} on constant coefficients, where the relation with $L^2$-Fourier restriction was established (cf. \cite{Tomas1975}). Ginibre--Velo \cite{GinibreVelo1979} covered a wider range of integrability indices, and finally Keel--Tao \cite{KeelTao1998} covered the time-integrability $p=2$ endpoints.

\medskip 

It was clarified in \cite{KeelTao1998} that Strichartz estimates follow from a dispersive estimate and an energy estimate, see Theorem \ref{thm:KeelTao}. In the constant-coefficient case the required dispersive estimate reads as
\begin{equation*}
\| P_1 e^{it (-\Delta)^{k/2}} u_0 \|_{L^\infty(\R^d)} \lesssim (1+|t|)^{-\sigma(k)} \| u_0 \|_{L^1(\R^d)} \quad (t \neq 0),
\end{equation*}
where $P_1$ denotes a smooth frequency projection to unit frequencies, $\sigma(k)$ denotes a decay parameter, and the energy estimate is given by
\begin{equation*}
\| P_1 e^{it (-\Delta)^{k/2}} u_0 \|_{L^2(\R^d)} \lesssim \| u_0 \|_{L^2(\R^d)}.
\end{equation*}
By the interpolation arguments due to Keel--Tao \cite{KeelTao1998} these yield Strichartz estimates for unit frequencies. The claim follows by rescaling for any dyadic frequency range and the dyadic frequency pieces are assembled by Littlewood-Paley theory.

We follow this strategy also in the current setup of Lipschitz coefficients as square function estimates and scaling symmetries are still available. The key step remains the proof of the dispersive estimate at unit frequencies. In the constant-coefficient case the crucial kernel estimate is a consequence of stationary phase estimates. Here we can use Phillips functional calculus as substitute for Fourier inversion.

\medskip

Previously, the first author proved fixed-time $L^p$-estimates for wave equations with structured Lipschitz coefficients in joint work with P. Portal \cite{FreyPortal2020}. In \cite{FreyPortal2020} an adapted scale of Hardy spaces was introduced on which the time-evolution is bounded. Wave packet analysis is not required in the following. We believe that it is worthwhile to track the time-evolution of wave packets also in the current setup. 

Without imposing additional structural assumptions, Tataru proved sharp Stri\-ch\-artz estimates for wave equations with rough coefficients in $C^s$, $0<s\leq 2$ in \cite{Tataru2001} (see also \cite{BahouriChemin1998,Klainerman2001,Tataru2002}). As counterexamples due to Smith--Tataru \cite{SmithTataru2002} show, in the general case there is derivative loss for $C^s$-coefficients, $0<s<2$ compared to the constant-coefficient case; for $C^2$-coefficients the usual Strichartz estimates hold true (see also Smith \cite{Smith1998}). In the present work, the additional structural assumptions rule out the trapping examples by Smith--Tataru, which allows us to recover the Strichartz estimates for constant-coefficients.

General Strichartz estimates for Schr\"odinger equations with variable coefficients were firstly derived by Staffilani--Tataru \cite{StaffilaniTataru2002} for $C^2$-coefficients under non-trapping assumptions, see also Burq--G\'erard--Tzvetkov \cite{BurqGerardTzvetkov2004} for estimates on smooth compact manifolds and Marzuola--Metcalfe--Tataru \cite{MarzuolaMetcalfeTataru2008}. 
Moreover, in one spatial dimension Burq--Planchon \cite{BurqPlanchon2006} showed Strichartz estimates only for coefficients with bounded variation; see also Beli--Ignat--Zuazua \cite{BeliIgnatZuazua2016}. It is conceivable that the proof of Burq--Planchon \cite{BurqPlanchon2006} also applies in higher dimensions. There is a huge body of literature on Strichartz estimates for wave or Schr\"odinger equations, local- and global-in-time, for Laplacians associated with a Riemannian metric $g$, and the above list is by no means of exhaustive; see also \cite{BoucletTzvetkov2007,BoucletTzvetkov2008,MetcalfeTataru2012}. However, we are not aware of sharp Strichartz estimates for metrics having regularity below $C^2$ and satisfying structural assumptions, but see \cite{SmithTataru2005} for the quasilinear case.

We shall also discuss inhomogeneous estimates and Strichartz estimates for H\"older coefficients.
A straight-forward consequence of Theorem \ref{thm:StrichartzEstimate} by Duhamel's formula and Minkowski's inequality, which is already useful to handle nonlinear equations, is the following:
\begin{corollary}
\label{cor:StrichartzEnergyEstimate}
Let $\ell \in \{1,2\}$, and $(\rho,p,q,d)$ be sharp wave $(\ell =1)$ or Schr\"odinger $(\ell = 2)$ admissible Strichartz pairs. Then, we find the following estimates to hold:
\begin{equation}
\label{eq:InhomogeneousEstimatesL1L2}
\| |D|^{-\rho} \langle D \rangle^{-\frac{\ell-1}{p}} u \|_{L_t^p([0,T],L^q(\R^d))} \lesssim_{T, \| a_i \|_{\dot{C}^{0,1}}} \| u(0) \|_{L^2(\R^d)} + \| (i \partial_t + L^{\frac{\ell}{2}}) u \|_{L_t^{1}([0,T],L^{2}(\R^d))}.
\end{equation}
\end{corollary}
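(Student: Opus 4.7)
The plan is to derive the corollary from Theorem \ref{thm:StrichartzEstimate} by the standard Duhamel plus Minkowski argument. Setting $F := (i\partial_t + L^{\ell/2})u$, Duhamel's formula reads
\begin{equation*}
u(t) = e^{itL^{\ell/2}}u(0) - i\int_0^t e^{i(t-s)L^{\ell/2}}F(s)\,ds.
\end{equation*}
I would apply $|D|^{-\rho}\langle D\rangle^{-(\ell-1)/p}$ to both sides and take the $L^p_t([0,T];L^q_x)$-norm. The triangle inequality separates the homogeneous piece, and Minkowski's inequality in time moves the $L^p_tL^q_x$-norm inside the $s$-integral, reducing the inhomogeneous contribution to
\begin{equation*}
\int_0^T \bigl\|\mathbf{1}_{[s,T]}(t)\,|D|^{-\rho}\langle D\rangle^{-(\ell-1)/p} e^{i(t-s)L^{\ell/2}}F(s)\bigr\|_{L^p_tL^q_x}\,ds.
\end{equation*}

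For the wave case ($\ell=1$) the factor $\langle D\rangle^{-(\ell-1)/p}$ is trivial. Theorem \ref{thm:StrichartzEstimate} applied to initial datum $u(0)$ at time $0$ controls the homogeneous piece by $\mu^{1/p}\|u(0)\|_{L^2}$, while applied to initial datum $F(s)$ at time $s$ it controls each integrand by $\mu^{1/p}\|F(s)\|_{L^2}$. Integrating in $s\in[0,T]$ produces the desired $\|F\|_{L^1_tL^2_x}$-term, and the factor $\mu^{1/p}$ is absorbed into the implicit constant since $T$ is finite and the coefficients are fixed.

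The Schrödinger case ($\ell=2$) is the main subtlety, since \eqref{eq:SEQStrichartz} furnishes $\|F(s)\|_{H^{1/p}}$ rather than $\|F(s)\|_{L^2}$. The extra $\langle D\rangle^{-1/p}$ on the left of \eqref{eq:InhomogeneousEstimatesL1L2} is precisely designed to absorb this $1/p$-derivative loss, but $\langle D\rangle^{-1/p}$ does not commute with $e^{itL}$. My plan is to replace it by the spectrally defined $\langle L\rangle^{-1/(2p)}$, which does commute with $e^{itL}$ via the Phillips calculus and, by the ellipticity \eqref{eq:EllipticityIntroduction}, gives equivalent $L^2$-norms to $\langle D\rangle^{-1/p}$. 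Moving $\langle L\rangle^{-1/(2p)}$ onto $F(s)$ and applying \eqref{eq:SEQStrichartz} yields $\|\langle L\rangle^{-1/(2p)}F(s)\|_{H^{1/p}}\sim\|F(s)\|_{L^2_x}$; integrating in $s$ closes the estimate.

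The hard part is making the comparison $\langle D\rangle^{-1/p}\leftrightarrow\langle L\rangle^{-1/(2p)}$ rigorous on the mixed spaces involved---on $L^2$ it is immediate from ellipticity, but the version needed here treats these operators as acting inside an $L^p_tL^q_x$-norm, so one must argue that the discrepancy between the two is an operator bounded on the relevant scales of Sobolev spaces. All the remaining ingredients---Duhamel, Minkowski, and a direct appeal to Theorem \ref{thm:StrichartzEstimate}---are entirely routine.
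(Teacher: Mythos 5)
Your high-level plan—Duhamel's formula, the triangle inequality for the free evolution of $u(0)$, Minkowski to pull the $L^p_tL^q_x$-norm inside the time integral, and then the homogeneous Strichartz estimate applied to each slice $F(s)$—is exactly the paper's proof (the authors simply cite Burq--G\'erard--Tzvetkov, Corollary~2.10, for the details). The wave case $\ell=1$ closes immediately from \eqref{eq:WaveStrichartz}, as you say, and you correctly pinpoint the Schr\"odinger obstruction: \eqref{eq:SEQStrichartz} hands you $\|F(s)\|_{H^{1/p}}$, whereas the corollary must produce $\|F(s)\|_{L^2}$, and the compensating factor $\langle D\rangle^{-1/p}$ on the left does not commute with $e^{itL}$.

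However, your proposed resolution—swapping $\langle D\rangle^{-1/p}$ for $\langle L\rangle^{-1/(2p)}$ and invoking $L^2$-equivalence via ellipticity—leaves the key step open, and you concede as much. What is actually needed is that $\langle D\rangle^{-1/p}\langle L\rangle^{1/(2p)}$ and its inverse act as bounded, zeroth-order operators on the homogeneous $L^q$-based Sobolev scale in which the left-hand side of \eqref{eq:InhomogeneousEstimatesL1L2} lives, and this is not "routine" from the $L^2$-equivalence alone. The cleaner way to close the argument, and what the paper (via the BGT reference) implicitly intends, is to run Duhamel and Minkowski at the level of the Littlewood--Paley pieces $\psi_k(|D_L|)$, using the frequency-localized estimate \eqref{eq:FrequencyLocalizationStrichartzReduction} established in the proof of Theorem~\ref{thm:StrichartzEstimate}. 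At each dyadic scale $2^k$ with $k\geq 1$ the loss $2^{k(\ell-1)/p}$ in that estimate is exactly cancelled by $\langle D_L\rangle^{-(\ell-1)/p}\psi_k(|D_L|)\approx 2^{-k(\ell-1)/p}\psi_k(|D_L|)$, so each piece is controlled by $\|\psi_k(|D_L|)u(0)\|_{L^2}+\|\psi_k(|D_L|)F\|_{L^1_tL^2_x}$. Square-summing via the square function estimate (Proposition~\ref{prop:SquareFunctionEstimate}) and another Minkowski inequality ($\ell^2_kL^1_t\hookleftarrow L^1_t\ell^2_k$) then gives the claim in the $D_L$-scale, and the passage to $|D|$, $\langle D\rangle$ is precisely the Sobolev-space equivalence the paper has already established and used in the proof of Theorem~\ref{thm:StrichartzEstimate}. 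This route sidesteps any attempt to commute $\langle D\rangle^{-1/p}$ through the propagator and confines the $L^q$-comparison to the same equivalence of norms the paper already relies on.
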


A standard argument invoking the Christ--Kiselev lemma \cite{ChristKiselev2001} yields more inhomogeneous estimates with precise dependence on time scale and Lipschitz norm of the coefficients. However, this misses endpoint estimates:
\begin{corollary}
\label{cor:InhomogeneousChristKiselev}
Let $\ell \in \{1,2\}$, and $(\rho,p,q,d)$, $(\tilde{\rho},\tilde{p},\tilde{q},d)$ be sharp wave $(\ell = 1)$ or Schr\"odinger $(\ell = 2)$ admissible Strichartz pairs. Suppose that $p<\tilde{p}'$. Then, we find the following estimates to hold:
\begin{equation}
\label{eq:InhomogeneousEstimates}
\begin{split}
&\quad \| |D|^{-\rho} \langle D \rangle^{-\frac{\ell-1}{p}} \int_0^t e^{i(t-s) L^{\frac{\ell}{2}}} F(s) ds \|_{L_t^p([0,T],L^q(\R^d))} \\
 &\lesssim \mu^{\frac{1}{p}+\frac{1}{\tilde{p}}} \| |D|^{\tilde{\rho}} \langle D \rangle^{\frac{\ell-1}{\tilde{p}}} F \|_{L_t^{\tilde{p}'}([0,T],L^{\tilde{q}'}(\R^d))}
 \end{split}
\end{equation}
with $\mu = \max_{i} T \| a_i \|^{\ell}_{\dot{C}^{0,1}}$.
\end{corollary}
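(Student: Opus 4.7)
The plan is the standard Keel--Tao $TT^*$ duality combined with the Christ--Kiselev lemma \cite{ChristKiselev2001}. I first establish the non-retarded version of \eqref{eq:InhomogeneousEstimates}, with $\int_0^t$ replaced by $\int_0^T$, and then use Christ--Kiselev to reinstate the retardation under the stated exponent condition.

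Set $A^L = |D|^{-\rho}\langle D\rangle^{-(\ell-1)/p}$ and $A^R = |D|^{\tilde\rho}\langle D\rangle^{(\ell-1)/\tilde p}$. Theorem \ref{thm:StrichartzEstimate} (equivalently, the homogeneous part of Corollary \ref{cor:StrichartzEnergyEstimate} with $F = 0$) gives $\|A^L e^{itL^{\ell/2}} u_0\|_{L^p_t L^q_x} \lesssim \mu^{1/p}\|u_0\|_{L^2}$. Since $L$ is self-adjoint and $A^L$ is a self-adjoint Fourier multiplier, taking the adjoint produces
\[
\Big\|\int_0^T e^{-isL^{\ell/2}} \tilde A^L G(s)\, ds\Big\|_{L^2} \lesssim \mu^{1/\tilde p}\|G\|_{L^{\tilde p'}_t L^{\tilde q'}_x},
\]
where $\tilde A^L$ is the analogous multiplier with tilde exponents. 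Substituting $G = A^R F$, and noting that $\tilde A^L A^R = I$ since all the spatial Fourier multipliers involved are radial in frequency and hence mutually commute, yields
\[
\Big\|\int_0^T e^{-isL^{\ell/2}} F(s)\, ds\Big\|_{L^2} \lesssim \mu^{1/\tilde p}\|A^R F\|_{L^{\tilde p'}_t L^{\tilde q'}_x}.
\]
Writing $\int_0^T e^{i(t-s)L^{\ell/2}} F(s)\, ds = e^{itL^{\ell/2}} h$ for $h := \int_0^T e^{-isL^{\ell/2}} F(s)\, ds$, applying the homogeneous estimate to the outer propagator, and chaining with the dual bound just derived produces the non-retarded version of \eqref{eq:InhomogeneousEstimates} with constant $\mu^{1/p+1/\tilde p}$.

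The Christ--Kiselev lemma \cite{ChristKiselev2001} then converts this untruncated estimate to the truncated $\int_0^t$ version under the stated condition on the time exponents, with loss of the double endpoint case. The main potential obstacle would be the non-commutativity of $|D|$ with the variable-coefficient operator $L$, but the structural placement of the Fourier multiplier weights outside both time integrals in each of the two uses of the homogeneous/dual bound sidesteps this entirely: at no point is a commutator between $|D|^\alpha$ or $\langle D\rangle^\beta$ and $e^{itL^{\ell/2}}$ required.
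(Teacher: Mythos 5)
Your proposal is correct and follows the route the paper intends: the homogeneous Strichartz estimate and its $L^2$-dual are chained through the factorization $e^{i(t-s)L^{\ell/2}} = e^{itL^{\ell/2}}e^{-isL^{\ell/2}}$, together with the identity $\tilde A^L A^R = \mathrm{Id}$ among the Fourier multipliers, to give the untruncated inhomogeneous estimate, after which Lemma~\ref{lem:ChristKiselev} reinstates the retardation. (Note that for admissible pairs $p \geq 2 \geq \tilde p'$, so the condition in the statement should read $\tilde p' < p$, as in Proposition~\ref{prop:InhomogeneousWaveEstimatesHoelder}, which is the direction the Christ--Kiselev lemma requires and the one your argument implicitly uses.)
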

More inhomogeneous estimates are available by the bilinear interpolation due to Keel--Tao \cite{KeelTao1998}. The estimates were further refined by Foschi \cite{Foschi2005} (see also \cite{Vilela2007,Taggart2010,Schippa2017,LeeSeo2014}). For the sake of brevity, we only record the estimates due to Keel--Tao, but remark that the estimates from \cite{Foschi2005} hold in a wider range.

\begin{theorem}
\label{thm:GlobalInhomogeneousStrichartzEstimates}
Let $\ell \in \{ 1, 2 \}$, and let $(\rho,p,q,d)$, $(\tilde{\rho},\tilde{p},\tilde{q},d)$ be sharp wave $(\ell = 1)$ or Schr\"odinger $(\ell = 2)$ admissible Strichartz pairs. Let $a_i = 1$ for $i=d+1,\ldots,2d$. Suppose that $Var(\log(a_i)) < 2 \pi$. Then, we find the following inhomogeneous Strichartz estimates to hold:
\begin{equation}
\label{eq:InhomogStrichartzEstimate}
\| |D|^{-\rho} \int_0^t e^{i(t-s) L^{\frac{\ell}{2}}} F(s) ds \|_{L_t^p(\R, L^q_x(\R^d))} \lesssim \| |D|^{\tilde{\rho}} F \|_{L^{\tilde{p}'}_t(\R, L^{\tilde{q}'}_x(\R^d))}.
\end{equation}
\end{theorem}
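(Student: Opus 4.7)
The strategy is to rerun the proof of Theorem \ref{thm:GlobalEstimates}, invoking the \emph{inhomogeneous} half of the Keel--Tao bilinear interpolation in addition to the homogeneous one. Under the hypothesis $Var(\log a_i) < 2\pi$, the proof of Theorem \ref{thm:GlobalEstimates} provides, at unit frequency, the global dispersive estimate
$$\| P_1 e^{it L^{\ell/2}} P_1 u_0 \|_{L^\infty(\R^d)} \lesssim (1 + |t|)^{-\sigma(\ell)} \| u_0 \|_{L^1(\R^d)}, \qquad t \neq 0,$$
with $\sigma(1) = (d-1)/2$ and $\sigma(2) = d/2$, alongside the trivial energy estimate $\| P_1 e^{it L^{\ell/2}} u_0 \|_{L^2} \lesssim \|u_0\|_{L^2}$. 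The abstract Keel--Tao machinery produces both homogeneous and inhomogeneous Strichartz bounds from exactly this pair of inputs, so I would simply retain the inhomogeneous output to obtain
$$\Big\| P_1 \int_0^t e^{i(t-s) L^{\ell/2}} P_1 F(s) \, ds \Big\|_{L^p_t L^q_x(\R \times \R^d)} \lesssim \| F \|_{L^{\tilde p'}_t L^{\tilde q'}_x(\R \times \R^d)}$$
for all sharp-admissible pairs $(p,q)$, $(\tilde p, \tilde q)$. The Keel--Tao endpoint $p = \tilde p' = 2$ is covered by the bilinear interpolation step, except at the Schr\"odinger $(p,q,d) = (2,\infty,2)$ and wave $(2,\infty,3)$ double endpoints, which are ruled out by the admissibility hypothesis in any case. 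Bypassing Christ--Kiselev in this way is exactly what permits the removal of the $p < \tilde p'$ constraint appearing in Corollary \ref{cor:InhomogeneousChristKiselev}.

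Next I transfer the unit-frequency estimate to a dyadic frequency $N$ by the parabolic scaling $x \mapsto N^{-1} x$, $t \mapsto N^{-\ell} t$. This replaces $L$ by an operator of the same type with coefficients $a_i(N^{-1} \cdot)$; crucially $Var(\log a_i(N^{-1} \cdot)) = Var(\log a_i) < 2\pi$, so the smallness condition is scale invariant and the global dispersive bound is inherited verbatim at frequency $N$. The resulting frequency-$N$ inhomogeneous Strichartz inequality carries the factor $N^{\rho + \tilde\rho}$, precisely matching the $|D|^{-\rho}$ on the left and $|D|^{\tilde\rho}$ on the right of the claimed estimate.

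The remaining task is to assemble the dyadic pieces by Littlewood--Paley theory: Minkowski's inequality in $L^p_t L^q_x$ combined with the standard square-function estimates for $|D|$ in $L^q$ and $L^{\tilde q'}$ (for $q = \infty$ one replaces the spatial norm by $\dot{B}^{-s_\ell,q}_{2}$ as in the remark following Theorem \ref{thm:StrichartzEstimate}) collapse the sum. The principal obstacle is that the flat frequency projections $P_N$ do not commute with $e^{it L^{\ell/2}}$, so off-diagonal terms $P_M e^{it L^{\ell/2}} P_N$ with $M \neq N$ must be controlled. This is handled using the Phillips functional calculus kernel bounds developed earlier in the paper, which exhibit rapid decay in $|\log_2(M/N)|$ and are therefore summable; the same mechanism underlies the assembly step in the proof of Theorem \ref{thm:GlobalEstimates}, and only minor bookkeeping is needed to accommodate the bilinear form of the inhomogeneous estimate.
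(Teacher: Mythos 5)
Your overall route — Keel--Tao's inhomogeneous bilinear interpolation applied to the global dispersive and energy estimates, scale invariance of $Var(\log a_i)$ under dyadic dilations, and a Littlewood--Paley assembly, thereby bypassing the Christ--Kiselev constraint $p < \tilde p'$ — is the same route as the paper's, which merely records that Theorem \ref{thm:GlobalInhomogeneousStrichartzEstimates} follows from Theorem \ref{thm:KeelTao}, the dispersive estimate, and a Littlewood--Paley decomposition.

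The gap is in your assembly step. You take the \emph{flat} projections $P_N = \psi_N(|D|)$, which do not commute with $e^{itL^{\ell/2}}$, and then assert that the off-diagonal blocks $P_M e^{itL^{\ell/2}} P_N$, $M\ne N$, decay rapidly in $|\log_2(M/N)|$ by ``Phillips calculus kernel bounds,'' adding that this is the same mechanism underlying the proof of Theorem \ref{thm:GlobalEstimates}. Neither claim is established in the paper: no such almost-orthogonality estimate is proved anywhere, and the proofs of Theorems \ref{thm:StrichartzEstimate} and \ref{thm:GlobalEstimates} in fact decompose with the \emph{operator-adapted} projections $\psi_k(\sqrt{D_a^2})=\psi_k(|D_L|)$ of Proposition \ref{prop:SquareFunctionEstimate}, which \emph{do} commute with $e^{itL^{\ell/2}}$ and hence pass through the Duhamel integral without any off-diagonal terms; the $|D|^{\pm\rho}$ factors are then traded for $|D_L|^{\pm\rho}$ via the equivalence of homogeneous Sobolev/Besov norms in the range $-1<s<1$ (Proposition \ref{prop:EquivalenceBesovNorms} and the surrounding discussion). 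Replacing $P_N$ by $\psi_k(|D_L|)$ closes your argument cleanly: Minkowski in $\ell^2_k$ over $L^p_tL^q_x$ (using $p,q\ge 2$) on the left, reverse Minkowski over $L^{\tilde p'}_tL^{\tilde q'}_x$ (using $\tilde p',\tilde q'\le 2$) on the right, and finally the norm equivalence. You also miss a simplification: for $\ell=2$, the dispersive estimate \eqref{eq:GlobalDispersiveEstimateSEQ} holds without any frequency localisation and $\rho=\tilde\rho=0$ for sharp Schr\"odinger pairs, so no Littlewood--Paley decomposition is needed in the Schr\"odinger case at all.
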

We remark that in case of the half-wave equation, we can also apply the Keel-Tao argument for finite times. In Section \ref{section:Applications} we give further applications of the analysis. Since the dispersive estimate hinges on integrals of the derivatives, we can extend the results to coefficients with (locally) bounded variation. We refer to Subsection \ref{subsection:BVCoefficients} for details. Next, we use a refined version of Corollary \ref{cor:StrichartzEnergyEstimate} to derive wave Strichartz estimates for H\"older coefficients. We truncate the coefficients in Fourier space, which allows to use Strichartz estimates for Lipschitz coefficients for frequency localized functions and use commutator estimates. Similar arguments were previously used by Tataru \cite{Tataru2001}; see also \cite{BahouriChemin1998}. Below $(S_N)_{N \in 2^{\mathbb{N}_0}}$ denotes an inhomogeneous Littlewood--Paley decomposition in spatial frequencies.
\begin{theorem}
\label{thm:HolderCoefficients}
Let $d \geq 2$, $s \in (0,1)$, $T>0$, $P = \partial_t^2 - \sum_{i=1}^d \partial_{x_i} a_i(x_i) \partial_{x_i}$, and $a_i \in \dot{B}^s_{\infty,1}(\R)$ satisfy \eqref{eq:EllipticityIntroduction} for $i=1,\ldots,d$. Suppose that $T^s \| a_i \|_{\dot{B}^s_{\infty,1}(\R)} \leq \mu$ for some $\mu \geq 1$. Then, we find the following estimate to hold:
\begin{equation}
\label{eq:DyadicEstimateHolderCoefficients}
\sup_{N \in 2^{\mathbb{N}_0}} N^{1-\rho-\frac{\sigma}{p}} \| S_N u \|_{L^p([0,T],L^q(\R^d))} \lesssim \mu^{\frac{1}{p}} \| \nabla u \|_{L^\infty([0,T], L^2(\R^d))} + \mu^{-\frac{1}{p'}} \| |D|^{-\sigma} P u \|_{L^1([0,T],L^2(\R^d))}
\end{equation}
provided that $(\rho,p,q,d)$ is a wave Strichartz pair and $\sigma = 1-s$.
\end{theorem}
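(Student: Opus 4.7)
The plan is to combine Corollary \ref{cor:StrichartzEnergyEstimate} with a frequency-adapted smoothing of the H\"older coefficients, in the spirit of \cite{Tataru2001, BahouriChemin1998}. For each dyadic $N \in 2^{\mathbb{N}_0}$, let $a_i^{(N)}$ denote the spatial low-frequency projection $S_{\leq N} a_i$, and set $L_N := -\sum_i \partial_{x_i}(a_i^{(N)} \partial_{x_i})$, $P_N := \partial_t^2 + L_N$. Bernstein inequalities in $\dot B^s_{\infty,1}$ give the Lipschitz norm $\|a_i^{(N)}\|_{\dot C^{0,1}} \lesssim N^{1-s}\|a_i\|_{\dot B^s_{\infty,1}}$ together with the truncation error $\|a_i - a_i^{(N)}\|_{L^\infty} \lesssim N^{-s}\|a_i\|_{\dot B^s_{\infty,1}}$, so by the hypothesis $T^s\|a_i\|_{\dot B^s_{\infty,1}} \leq \mu$ the effective Strichartz parameter of $P_N$ on $[0,T]$ is $T\|a_i^{(N)}\|_{\dot C^{0,1}} \lesssim \mu(NT)^{\sigma}$.

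Since $\partial_{x_i}$ commutes with $S_N$, the Littlewood--Paley piece $S_N u$ satisfies $P_N S_N u = S_N P u + \mathcal E_N$ with
\[
\mathcal E_N = -\sum_{i=1}^d \partial_{x_i}[a_i, S_N]\partial_{x_i} u + \sum_{i=1}^d \partial_{x_i}(a_i - a_i^{(N)}) \partial_{x_i} S_N u.
\]
A standard paraproduct decomposition of $a_i$ (splitting $S_M a_i$ into the regimes $M \ll N$, $M \sim N$, $M \gg N$) yields the commutator bound $\|[a_i, S_N] f\|_{L^2} \lesssim N^{-s}\|a_i\|_{\dot B^s_{\infty,1}} \|f\|_{L^2}$, which combined with the $L^\infty$-estimate on $a_i - a_i^{(N)}$ and Bernstein ($\|\partial_{x_i} S_N u\|_{L^2} \sim \|\nabla S_N u\|_{L^2}$) gives $\||D|^{-1}\mathcal E_N\|_{L^2} \lesssim N^{-s}\|a_i\|_{\dot B^s_{\infty,1}}\|\nabla u\|_{L^2}$.

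Applying the wave-equation form of Corollary \ref{cor:StrichartzEnergyEstimate} to $S_N u$ regarded as a solution of the Lipschitz wave equation driven by $P_N$, on subintervals of length $T_N := (N^{1-s}\|a\|_{\dot B^s_{\infty,1}})^{-1}$ on which the Strichartz parameter equals $1$, and summing the resulting estimates in $\ell^p$ (using $\ell^p \hookrightarrow \ell^1$ for the $L^1_t$-forcing), one obtains
\[
\||D|^{-\rho} S_N u\|_{L^p_T L^q} \lesssim \mu^{1/p}(NT)^{\sigma/p} N^{-1}\|\nabla u\|_{L^\infty_T L^2} + \||D|^{-1}(S_N Pu + \mathcal E_N)\|_{L^1_T L^2},
\]
after invoking the elliptic equivalence $L_N^{-1/2} \sim |D|^{-1}$. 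The main forcing term is converted via the Bernstein identity $\||D|^{-1} S_N Pu\|_{L^2} \sim N^{\sigma - 1}\||D|^{-\sigma}Pu\|_{L^2}$ into the sought $\||D|^{-\sigma}Pu\|_{L^1_T L^2}$; multiplying the previous inequality by $N^{1-\sigma/p}$ and taking the supremum over $N$ yields the asserted bound, once the error $\mathcal E_N$ is absorbed into the $\mu^{1/p}\|\nabla u\|_{L^\infty L^2}$-summand.

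The main technical obstacle is the careful bookkeeping of the competing powers of $\mu$, $N$ and $T$ across the coefficient truncation, the time subdivision, and the paraproduct/commutator decomposition; in particular, extracting the sharp coefficient $\mu^{-1/p'}$ on the forcing term (rather than the naive $\mu^{1/p}$) requires invoking the refined inhomogeneous estimate of Corollary \ref{cor:InhomogeneousChristKiselev} in its Christ--Kiselev form for $P_N$, which produces a dual pairing in $(\tilde p, \tilde q)$ more favourable in $\mu$ than the trivial $L^1_t L^2_x$--$L^p_t L^q_x$ endpoint of Corollary \ref{cor:StrichartzEnergyEstimate}.
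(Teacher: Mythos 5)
Your overall strategy mirrors the paper's: truncate the coefficients at spatial frequency $\sim N$, apply Strichartz estimates for the resulting Lipschitz operator $P_N$, and control the discrepancy $P_N S_N u - S_N P u$ via commutator and paraproduct estimates. That part is sound and matches the paper's Theorem~\ref{thm:HolderCoefficients} proof in structure.

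However, there is a genuine gap in the crucial step that produces the factor $\mu^{-1/p'}$ on the forcing. After rescaling ($T=1$, $\|a_i\|_{\dot B^s_{\infty,1}}\leq \mu$) and truncation, one has $\|a_{i,\leq N}\|_{\dot C^{0,1}}\lesssim \mu N^\sigma$, and the estimate \eqref{eq:DyadicEstimateHolderCoefficients} is equivalent (by $|D|^{-\sigma}\sim N^{-\sigma}$ on $S_N$-localized data) to a localized wave Strichartz inequality whose forcing coefficient must be $(\mu N^\sigma)^{-1/p'}$, i.e.\ one must produce a compensating factor $N^{-\sigma/p'}$. Your time-subdivision followed by "$\ell^p\hookrightarrow\ell^1$ for the $L^1_t$-forcing" only yields a forcing coefficient of $1$, leaving an uncontrolled $N^{\sigma/p'}$ after unwinding the scaling, so the estimate does not close. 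Deferring to Corollary~\ref{cor:InhomogeneousChristKiselev} cannot repair this: that corollary requires $p<\tilde p'$, which for an $L^1_t L^2_x$ forcing means $\tilde p'=1$ and hence $p<1$, impossible; and even ignoring that, its $\mu$-factor is $\mu^{1/p+1/\tilde p}=\mu^{1/p}$ for $\tilde p=\infty$, which is worse, not better, than $\mu^{-1/p'}$.

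The paper's actual device is Proposition~\ref{prop:MuDependence} (and its divergence-form consequence Corollary~\ref{cor:WaveEstimateDivergence}): one subdivides $[0,T]$ into $\sim\mu$ intervals chosen \emph{simultaneously} to have admissible length \emph{and} to carry an equal share $\mu^{-1}\|Pu\|_{L^1L^2}$ of the total forcing. On each such interval the inhomogeneous Strichartz estimate with constant $O(1)$ applies; summing in $\ell^p$ over the $\sim\mu$ intervals gives $\mu^{1/p}$ on the energy and $\mu^{1/p}\cdot\mu^{-1}=\mu^{-1/p'}$ on the forcing. It is this equidistribution of the forcing that is missing from your argument; without it the proof as written does not establish \eqref{eq:DyadicEstimateHolderCoefficients}.
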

We also apply Strichartz estimates to nonlinear Schr\"odinger equations. With the same Strichartz estimates as in case of constant coefficients at hand, the arguments to show analytic well-posedness for (sub-)critical nonlinear Schr\"odinger equations are standard by now (cf. \cite{Tsutsumi1987,Tao2006}). We refer to Subsection \ref{subsection:NonlinearEquations} for details. In Section \ref{section:BochnerRiesz} we point out how the global Schr\"odinger Strichartz estimates of Theorem \ref{thm:GlobalEstimates} yield Bochner--Riesz and spectral multiplier estimates for the scalar operator $L= - \sum_{i=1}^d \partial_{x_i} (a_i(x_i) \partial_{x_i})$ with $a_i \in BV(\R)$ satisfying ellipticity conditions. For this we apply the abstract results due to Chen \emph{et al.} \cite{ChenOuhabazSikoraYan2016,ChenLeeSikoraYan2020} and Sikora--Yan--Yao \cite{SikoraYanYao2018}. 

\medskip

\emph{Outline of the paper.} In Section \ref{section:Preliminaries} we recall basic facts about elliptic operators with Lipschitz coefficients as resolvent estimates and the Phillips functional calculus. In Section \ref{section:StrichartzEstimates} we show Strichartz estimates for Lipschitz coefficients, and in Section \ref{section:Applications} we record applications. We extend the Strichartz estimates to $BV$-coefficients, derive wave Strichartz estimates for H\"older coefficients, and also record well-posedness results for nonlinear Schr\"odinger equations with $BV$-coefficients. In Section \ref{section:BochnerRiesz} we show Bochner--Riesz means and spectral multiplier estimates.

\section{Preliminaries}
\label{section:Preliminaries}

We recall the basic setup of \cite{FreyPortal2020} with an emphasis on the Phillips functional calculus. We also collect basic resolvent estimates and the square function estimate.
Recall that for $i \in \{1,\ldots, 2d\}$, $a_i \in C^{0,1}(\R)$, which satisfy the ellipticity condition
\begin{equation}
\label{eq:Ellipticity}
\exists \lambda, \Lambda > 0: \, \forall x \in \R: \; \lambda \leq a_i(x) \leq \Lambda.
\end{equation}
The ellipticity constants will be fixed for the rest of the paper. 
\begin{definition}
\label{def:DiracOperators}
For $j \in \{1,\ldots,2d\}$, let $a_j \in C^{0,1}(\R)$ satisfy \eqref{eq:Ellipticity}. For $\xi = (\xi_1,\ldots,\xi_d) \in \R^d$, we define
\begin{equation*}
\begin{split}
\xi. D_a &= \sum_{j=1}^d \xi_j \begin{pmatrix}
0 & - a_{j+d}(x_j) \partial_j (  \cdot) \\
a_j(x_j) \partial_j & 0
\end{pmatrix}, \\
\xi. \sqrt{D_a^2} &= \sum_{j=1}^d \xi_j 
\begin{pmatrix}
\sqrt{- a_{j+d}(x_j) \partial_j ( a_j(x_j) \partial_j)} & 0 \\
0 & \sqrt{- a_j(x_j) \partial_j (a_{j+d}(x_j) \partial_j)}
\end{pmatrix}
,
\end{split}
\end{equation*}
which we view as unbounded operator in $L^2(\R^d;\C^2)$ with domain $W^{1,2}(\R^d;\C^2)$.
\end{definition}
The Dirac operators $D_{a_i}$ are not commuting. The remedy is to use the cosine group, which is as well generated by $\sqrt{D_{a_i}^2}$, and the operators $\sqrt{D_{a_i}^2}$ and $\sqrt{D_{a_j}^2}$ are commuting. Recall the following about the generator of the transport group:

\begin{proposition}[{\cite[Proposition~3.1]{FreyPortal2020}}]
\label{prop:PropertiesTransport}
The operator $a \frac{d}{dx}$ generates a bounded $C_0$-group in $L^p(\R)$ for all $p \in [1,\infty)$. Moreover, $\phi(x) = \int_0^x \frac{1}{a(y)} dy$ is a global $C^1$-diffeomorphism, and we have
\begin{equation*}
(e^{ta \partial_x} f)(x) = f(\chi(t,x)) \text{ for } f \in C^\infty_c(\R)
\end{equation*}
with $\chi(t,x) = \phi^{-1}(t+\phi(x))$, $\chi \in C^1(\R^2;\R)$.
\end{proposition}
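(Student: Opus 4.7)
The plan is to build the flow explicitly from the potential $\phi$ and then derive all the required properties from the ellipticity bounds on $a$. First I would observe that by \eqref{eq:Ellipticity} the map $\phi(x) = \int_0^x \frac{1}{a(y)}\,dy$ is well-defined, $C^1$, and satisfies $\Lambda^{-1} \leq \phi'(x) \leq \lambda^{-1}$. Hence $\phi: \R \to \R$ is strictly increasing, surjective (as a bi-Lipschitz map with non-vanishing derivative), and the inverse function theorem provides an inverse $\phi^{-1} \in C^1(\R)$ with $(\phi^{-1})'(s) = a(\phi^{-1}(s))$. Setting $\chi(t,x) = \phi^{-1}(t + \phi(x))$ then gives a $C^1$-function on $\R^2$, and a direct computation yields the ODE identities
\begin{equation*}
\partial_t \chi(t,x) = a(\chi(t,x)), \qquad \partial_x \chi(t,x) = \frac{a(\chi(t,x))}{a(x)},
\end{equation*}
together with the group law $\chi(t+s,x) = \chi(t,\chi(s,x))$, which is immediate from the definition.

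Next I would define $(T_t f)(x) = f(\chi(t,x))$ for $f \in C_c^\infty(\R)$ and establish uniform $L^p$-boundedness by change of variables. Using the Jacobian computed above, together with $\lambda/\Lambda \leq a(x)/a(y) \leq \Lambda/\lambda$, one obtains
\begin{equation*}
\| T_t f \|_{L^p(\R)}^p = \int_\R |f(y)|^p \, \frac{a(\chi(-t,y))}{a(y)}\, dy \leq \frac{\Lambda}{\lambda} \| f \|_{L^p(\R)}^p,
\end{equation*}
uniformly in $t \in \R$. Density of $C_c^\infty$ in $L^p$ for $p < \infty$ then lets me extend $T_t$ to a bounded operator on $L^p(\R)$, and the group property transfers by approximation.

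Finally, strong continuity at $t=0$ follows because $\chi(t,x) \to x$ locally uniformly as $t \to 0$, so $T_t f \to f$ in $L^p$ first for $f \in C_c^\infty(\R)$ by dominated convergence, and then for general $f \in L^p(\R)$ by the uniform bound and a standard density argument. Differentiating $T_t f$ at $t = 0$ for $f \in C_c^\infty(\R)$ produces $a(x) f'(x)$, which identifies $a \, d/dx$ as the generator on a core. The main obstacle I anticipate is the transition from pointwise identities (which use that $\chi$ is jointly $C^1$) to the $L^p$-statements, in particular verifying that strong continuity does not fail at $t=0$ for non-smooth data; this is resolved by the uniform bound via change of variables, which is why the ellipticity assumption is essential throughout and not only for the existence of $\phi^{-1}$.
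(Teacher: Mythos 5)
The paper states this proposition without proof, simply citing \cite[Proposition~3.1]{FreyPortal2020}, so there is no internal proof to compare against. Your argument is the standard (and correct) one: build the flow $\chi(t,x)=\phi^{-1}(t+\phi(x))$, verify the group law and the Jacobian identity $\partial_y\chi(-t,y)=a(\chi(-t,y))/a(y)$ from the ellipticity bounds, obtain the uniform $L^p$-bound $\|T_t\|_{L^p\to L^p}\leq(\Lambda/\lambda)^{1/p}$ by change of variables, and deduce strong continuity and the generator computation by density. One minor caveat: since $a$ is only Lipschitz, $\chi(t,\cdot)$ is $C^1$ but not smoother, so $C^\infty_c(\R)$ is not invariant under $T_t$; to identify the generator of $T_t$ precisely as the closure of $a\,\frac{d}{dx}$ you should work with the $T_t$-invariant core $C^1_c(\R)$ (or directly with $W^{1,p}(\R)$) rather than with $C^\infty_c(\R)$. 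This does not affect the substance of the argument.
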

We can now prove the following basic property of the Dirac operator:

\begin{proposition}
\label{prop:TransportGroupPerturbation}
Let $a$, $b \in C^{0,1}(\R)$ satisfy \eqref{eq:Ellipticity}. Then, the operators
\begin{equation*}
A: W^{1,p}(\R;\C^2) \to L^p(\R;\C^2), \quad A =
\begin{pmatrix}
0 & b \partial_x ( \cdot) \\
a \partial_x & 0
\end{pmatrix}, \; i \tilde{D}_a = i \begin{pmatrix}
0 & - b \partial_x(\cdot) \\
a \partial_x & 0
\end{pmatrix}
\end{equation*}
with the same domain generate $C_0$-groups in $L^p(\R;\C^2)$ for $1 \leq p < \infty$.
\end{proposition}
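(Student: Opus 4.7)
The plan is to diagonalize the principal symbol of $A$ (and $i\tilde D_a$) via conjugation by a matrix-valued Lipschitz multiplier, reducing to the scalar transport result of Proposition~\ref{prop:PropertiesTransport} combined with a bounded perturbation. The symbol of $A$ is $M_0(x) = \begin{pmatrix} 0 & b(x) \\ a(x) & 0 \end{pmatrix}$, whose eigenvalues $\pm c(x)$ with $c := \sqrt{ab}$ inherit the ellipticity $\lambda \leq c \leq \Lambda$ from $a,b$ and lie in $C^{0,1}(\R)$ (since $a,b$ are Lipschitz and bounded below). A pointwise diagonalizer is
\[
M(x) = \begin{pmatrix} \sqrt{b(x)} & \sqrt{b(x)} \\ \sqrt{a(x)} & -\sqrt{a(x)} \end{pmatrix}, \qquad M(x)^{-1} M_0(x) M(x) = \begin{pmatrix} c(x) & 0 \\ 0 & -c(x) \end{pmatrix}.
\]
The entries of $M$ and $M^{-1}$ lie in $C^{0,1}(\R) \cap L^\infty(\R)$, so pointwise multiplication by $M^{\pm 1}$ is a bounded isomorphism of $L^p(\R;\C^2)$ that preserves the domain $W^{1,p}(\R;\C^2)$.

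Next, the product rule gives the key identity
\[
M^{-1} A M = \begin{pmatrix} c\,\partial_x & 0 \\ 0 & -c\,\partial_x \end{pmatrix} + V,
\]
where $V$ is multiplication by an explicit $L^\infty$ matrix-valued function built from $\sqrt a$, $\sqrt b$ and their (bounded) derivatives divided by $c$. The diagonal part generates a $C_0$-group on $L^p(\R;\C^2)$ by applying Proposition~\ref{prop:PropertiesTransport} componentwise (the group generated by $-c\partial_x$ is obtained from the one generated by $c\partial_x$ by time reversal). The bounded perturbation theorem then lifts this to a $C_0$-group for $M^{-1} A M$, and conjugating back by the bounded invertible similarity $M$ yields the $C_0$-group property for $A$ itself with domain $W^{1,p}(\R;\C^2)$. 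The claim for $i\tilde D_a$ is entirely analogous: its symbol $\begin{pmatrix} 0 & -ib \\ ia & 0 \end{pmatrix}$ again has the real eigenvalues $\pm c$, and the pointwise diagonalizer $\widetilde M(x) = \begin{pmatrix} \sqrt b & \sqrt b \\ i\sqrt a & -i\sqrt a \end{pmatrix}$ has the same regularity and boundedness as $M$, so the identical scheme applies.

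The main obstacle is the conjugation computation together with the verification that all commutator contributions indeed collapse to a bounded multiplication operator. This relies crucially on $a, b \in C^{0,1}$ being bounded below, which ensures that $\sqrt a$, $\sqrt b$ and their reciprocals are Lipschitz, so their distributional derivatives lie in $L^\infty$; without this, the zeroth-order remainder $V$ would fail to be bounded and the perturbation step would break down. Everything else is a standard application of semigroup theory and does not interact with the Lipschitz (as opposed to smooth) nature of the coefficients.
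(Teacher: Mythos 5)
Your proposal is correct and follows essentially the same route as the paper: both diagonalize the principal symbol via the same Lipschitz matrix multiplier $M$, reduce to the scalar transport operator $(ab)^{1/2}\partial_x$ of Proposition~\ref{prop:PropertiesTransport} plus a bounded multiplication remainder, and conclude by a bounded-perturbation argument. The only cosmetic difference is that the paper spells out Duhamel's formula for the perturbed diagonal system rather than citing the bounded perturbation theorem abstractly, which amounts to the same thing.
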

\begin{proof}
It will be enough to show the claim for $A$ as we shall see that the proof extends to $i \tilde{D}_a$. For the proof we diagonalize $A$ up to an $L^p$ bounded operator. The principal symbol is given by
\begin{equation*}
i \xi
\begin{pmatrix}
0 & b(x) \\
a(x) & 0
\end{pmatrix}
.
\end{equation*}
The eigenvalues are given by $\pm i\xi (ab)^{1/2}(x)$, and it suffices to note a matrix $M$ of eigenvectors is given by
\begin{equation*}
M = \begin{pmatrix}
b^{1/2} & b^{1/2} \\
a^{1/2} & -a^{1/2}
\end{pmatrix},
\quad M^{-1} = \frac{1}{2} \begin{pmatrix}
b^{-1/2} & a^{-1/2} \\
b^{-1/2} & - a^{-1/2}
\end{pmatrix}
.
\end{equation*}
With this it is straightforward that
\begin{equation*}
\begin{split}
&
\frac{1}{2}
\begin{pmatrix}
b^{-1/2}(x) & a^{-1/2}(x) \\
b^{-1/2}(x) & - a^{-1/2}(x)
\end{pmatrix}
\begin{pmatrix}
0 & \partial_x (b(x) \cdot) \\
a(x) \partial_x & 0
\end{pmatrix}
\begin{pmatrix}
b^{1/2}(x) & b^{1/2}(x) \\
a^{1/2}(x) & -a^{1/2}(x)
\end{pmatrix}
\\
&=
\begin{pmatrix}
(ab)^{1/2}(x) \partial_x & 0 \\
0 & -(ab)^{1/2}(x) \partial_x
\end{pmatrix}
+ E(x),
\end{split}
\end{equation*}
where $E$ is a $2 \times 2$ matrix consisting of linear combinations of derivatives of the coefficients, which are bounded. Hence, we can transform the equation
\begin{equation*}
\partial_t
\begin{pmatrix}
u_1 \\ u_2
\end{pmatrix}
= 
\begin{pmatrix}
0 & b \partial_x (\cdot) \\
a \partial_x & 0
\end{pmatrix}
\begin{pmatrix}
u_1 \\ u_2
\end{pmatrix}
\end{equation*}
to
\begin{equation*}
\partial_t
\begin{pmatrix}
v_1 \\ v_2
\end{pmatrix}
=
\left(
\begin{pmatrix}
(ab)^{1/2} \partial_x & 0 \\
0 & - (ab)^{1/2} \partial_x
\end{pmatrix}
+E \right)
\begin{pmatrix}
v_1 \\ v_2
\end{pmatrix}
\end{equation*}
with
\begin{equation}
\label{eq:LpBoundE}
\begin{pmatrix}
v_1 \\ v_2
\end{pmatrix}
= \frac{1}{2}
\begin{pmatrix}
b^{-1/2} & a^{-1/2} \\
b^{-1/2} & -a^{-1/2}
\end{pmatrix}
\begin{pmatrix}
u_1 \\ u_2
\end{pmatrix},
\quad \| E \|_{L^p \to L^p} \leq C(\lambda,\Lambda) \max_{c=a,b} (\| c \|_{\dot{C}^{0,1}}).
\end{equation}
For $v$ we have the following representation by Duhamel's formula:
\begin{equation}
\label{eq:SolutionV}
v = 
\begin{pmatrix}
e^{t(ab)^{1/2}(x) \partial_x} v_1 \\
e^{-t(ab)^{1/2}(x) \partial_x} v_2
\end{pmatrix}
+ \int_0^t T_{t-s} (Ev)(s) ds,
\end{equation}
with $(T_t)$ denoting the $C_0$-transport group generated by 
\begin{equation}
B = \text{diag}((ab)^{1/2}(x) \partial_x, -(ab)^{1/2}(x) \partial_x).
\end{equation}
From \eqref{eq:SolutionV} is immediate that $v_0 \mapsto v(t)$ is a $C_0$-group in $L^p(\R;\C^2)$ for $1 \leq p < \infty$, and so is $u_0 \mapsto u(t)$.
In a similar spirit, we can show that $i \tilde{D}_a$ generates a $C_0$-group in $L^p(\R^2;\C)$ for $1\leq p < \infty$ because the diagonalization still applies, and after change of basis, $i \tilde{D}_a$ generates the transport group up to an $L^p$-bounded error.
\end{proof}
\noindent We find a representation in terms of solutions to wave equations by squaring the transport group. Let
\begin{equation*}
\begin{pmatrix}
u \\
v
\end{pmatrix}
(t) = e^{it D_a} 
\begin{pmatrix}
u_0 \\ v_0
\end{pmatrix}.
\end{equation*}
By taking two time derivatives, we find
\begin{equation*}
\left\{
\begin{array}{cl}
\partial_t^2 u &= b \partial_x ( a \partial_x) u, \\
\partial_t^2 v &= a \partial_x (b \partial_x v).
\end{array} \right.
\end{equation*}
Let $L_1 = - b \partial_x( a \partial_x)$, $L_2 = -a \partial_x (b \partial_x)$. Then we can write the solution as
\begin{equation*}
\left\{ \begin{array}{cl}
u(t) &= \cos(t L_1) u_0 - i \frac{\sin(t L_1)}{L_1} b \partial_x  v_0, \\
v(t) &= \cos(t L_2) v_0 + i \frac{\sin(t L_2)}{L_2} a \partial_x u_0,
\end{array} \right.
\end{equation*}
or, concisely,
\begin{equation*}
e^{it D_a} = 
\begin{pmatrix}
\cos(t L_1) & -i \frac{\sin(t L_1)}{L_1} (b \partial_x) \\
i \frac{\sin(t L_2)}{L_2} a \partial_x & \cos(t L_2)
\end{pmatrix}
.
\end{equation*}
A similar computation yields for
\begin{equation*}
\begin{pmatrix}
u \\ v
\end{pmatrix}
(t) = e^{ it \sqrt{D_a^2}}
\begin{pmatrix}
u_0 \\ v_0
\end{pmatrix}
,
\end{equation*}
the representation
\begin{equation*}
\left\{ \begin{array}{cl}
u(t) &= \cos(t L_1) u_0 + i \sin(t L_1) u_0, \\
v(t) &= \cos(t L_2) v_0 + i \sin(t L_2) v_0,
\end{array} \right.
\end{equation*}
or, concisely,
\begin{equation*}
\begin{pmatrix}
u(t) \\ v(t)
\end{pmatrix}
=
\begin{pmatrix}
\cos(t L_1) + i \sin(t L_1) & 0 \\
0 & \cos(tL_2) + i \sin(tL_2)
\end{pmatrix}
\begin{pmatrix}
u_0 \\ v_0
\end{pmatrix}
= e^{it \sqrt{D_a^2}} 
\begin{pmatrix}
u_0 \\ v_0
\end{pmatrix}
.
\end{equation*}
The Dirac operators $(D_a)$ are bisectorial operators (cf. \cite[Definition~2.4]{FreyPortal2020}, \cite[Chapter~10]{HytoenenNeervenVeraarWeis2017}), but not commuting. However, the cosine groups of the operators coincide:
\begin{equation*}
e^{it \sqrt{D_a^2}} + e^{-it \sqrt{D_a^2}} = e^{it D_a} + e^{-it D_a}.
\end{equation*}
This allows us to recover a Phillips functional calculus, for which we follow the argument in \cite{FreyPortal2020}:
\begin{definition}
\label{def:PhillipsCalculus}
For $\Psi \in \mathcal{S}(\R^d)$, we define $\Psi(\sqrt{D_a^2})$ using Phillips functional calculus associated with the commutative group $\big( \exp (i \xi. \sqrt{D_a^2}) \big)_{\xi \in \R^d}$:
\begin{equation*}
\Psi(\sqrt{D_a^2}) = \frac{1}{(2 \pi)^d} \int_{\R^d} \hat{\Psi}(\xi) \exp(i \xi. \sqrt{D_a^2}) d \xi.
\end{equation*}
We consider functions $\Psi$ which are even in every coordinate, i.e., $\Psi = \Psi^s$ with
\begin{equation*}
\Psi^s(x) = 2^{-d} \sum_{(\delta_j)^d_{j=1} \in \{ -1 , 1 \}^d} \Psi(\delta_1 x_1, \ldots , \delta_d x_d).
\end{equation*}
For such functions, we have that
\begin{equation*}
\begin{split}
&\quad \Psi(\sqrt{D_a^2}) \\
&= \frac{1}{(2 \pi)^d} \int_{\R^{d-1}} \int_{\R} \hat{\Psi}^s(\xi) \frac{1}{2} \big( \exp( i \xi_1 e_1 \sqrt{D_a^2}) + \exp(-i \xi_1 e_1 \sqrt{D_a^2}) \big) \exp(i(\xi - \xi_1 e_1) \sqrt{D_a^2}) \\
&= \frac{1}{(2 \pi)^d} \int_{\R^d} \hat{\Psi}^s(\xi) \prod_{j=1}^d \exp( i \xi_j D_{a_j}) d\xi,
\end{split}
\end{equation*}
since $D_a$ and $\sqrt{D_a^2}$ generate the same cosine family. 
For the sake of brevity, we write
\begin{equation*}
\Psi^s(\sqrt{D_a^2}) = \Psi^s(D_a) = \frac{1}{(2 \pi)^d} \int_{\R^d} \hat{\Psi}^s(\xi) \exp(i \xi. D_a) d\xi.
\end{equation*}
Moreover, we have
\begin{equation*}
L = D_a^2 = \sqrt{D_a^2} \cdot \sqrt{D_a^2} = \begin{pmatrix}
- \sum_{j=1}^d a_{j+d}(x_j)  \partial_{x_j} (a_j(x_j) \partial_{x_j}) & 0 \\
0 & - \sum_{j=1}^d a_j(x_j) \partial_{x_j}  (a_{j+d}(x_j) \partial_{x_j})
\end{pmatrix}.
\end{equation*}
\end{definition}

As another consequence of the close relation with the transport group, the group generated by $i \xi.D_a$ satisfies a strong form of finite speed of propagation (cf. \cite[Remark~4.2]{FreyPortal2020}).
We summarize the finite speed of propagation property in the following lemma:
\begin{lemma}
\label{lem:FiniteSpeedPropagation}
Let $u_0 \in C^\infty_c(\R;\C^2)$, and $\partial_t u  = i \tilde{D}_a u$, $u(0) = u_0$. Then, we find 
\begin{equation*}
u(t,x) = 0 \text{ if dist}(x,\text{supp}(u_0)) \geq C |t|.
\end{equation*}
\end{lemma}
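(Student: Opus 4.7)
The plan is to reduce finite speed of propagation for $i\tilde D_a$ to the explicit characteristic flow of the scalar transport operator from Proposition~\ref{prop:PropertiesTransport}, via the matrix diagonalization used in Proposition~\ref{prop:TransportGroupPerturbation}.

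First I would invoke the proof of Proposition~\ref{prop:TransportGroupPerturbation} to perform the pointwise change of basis $v = M^{-1} u$, where $M$ and $M^{-1}$ are bounded matrix-valued multiplications. In these variables the system becomes $\partial_t v = (B + E) v$ with $B = \mathrm{diag}((ab)^{1/2}(x)\partial_x, -(ab)^{1/2}(x)\partial_x)$ and $E$ the bounded multiplication operator from~\eqref{eq:LpBoundE}. Since $M$ is pointwise invertible, supports are preserved under this change of variables, and it therefore suffices to prove the support bound for $v$.

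Next I would apply Proposition~\ref{prop:PropertiesTransport} with $c = (ab)^{1/2}$: each diagonal component of the group $T_t$ generated by $\pm (ab)^{1/2}(x)\partial_x$ is $(T_{\pm t} f)(x) = f(\chi(\pm t, x))$ with $\chi(t,x) = \phi^{-1}(t + \phi(x))$ and $\phi(x) = \int_0^x (ab)^{-1/2}(y)\,dy$. Ellipticity~\eqref{eq:Ellipticity} gives $(ab)^{1/2} \leq \Lambda$, so the fundamental theorem of calculus yields $|\chi(t,x) - x| \leq \Lambda|t|$. Hence $T_t$ has finite speed of propagation with constant $C = \Lambda$.

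Then I iterate the Duhamel representation~\eqref{eq:SolutionV},
\begin{equation*}
v(t) = T_t v_0 + \int_0^t T_{t-s}(E v)(s)\,ds = \sum_{n \geq 0} V_n(t),
\end{equation*}
with $V_0(t) = T_t v_0$ and $V_{n+1}(t) = \int_0^t T_{t-s}\, E\, V_n(s)\,ds$. Because $E$ is a local operator (multiplication by a bounded matrix whose entries involve only $a, b$ and their derivatives, with no differentiation of $v$) and each $T_\tau$ moves supports by at most $\Lambda|\tau|$, an induction on $n$ shows $\mathrm{supp}\, V_n(t, \cdot) \subseteq \{x \in \R : \mathrm{dist}(x, \mathrm{supp}\, u_0) \leq \Lambda |t|\}$ for every $n$. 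Standard perturbation theory gives convergence of the Neumann series in $L^p$, so the support property is inherited by $v(t)$, and thus by $u(t) = M v(t)$.

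The main obstacle is essentially bookkeeping: one must verify that the error term $E$ really is a zeroth-order operator (so it preserves supports) and that the Neumann series converges in a topology strong enough for the support statement to pass to the limit. Both points are handled by the $L^p$-boundedness of $E$ and $T_t$ already recorded in Proposition~\ref{prop:TransportGroupPerturbation}, so no new analytic input is required beyond what has been set up in the preceding subsection.
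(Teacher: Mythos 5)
Your proof is correct and follows exactly the route the paper intends: the paper itself does not spell out an argument, but only asserts the lemma ``as another consequence of the close relation with the transport group'' and cites [FreyPortal2020, Remark~4.2]. Diagonalizing via the pointwise matrix $M$, using the explicit characteristic flow $\chi(t,x)=\phi^{-1}(t+\phi(x))$ to obtain the speed bound $|\chi(t,x)-x|\le\Lambda|t|$, and propagating the support inclusion through the Neumann/Duhamel iteration (valid because $E$ is a local multiplication operator and $\Lambda|s|+\Lambda|t-s|=\Lambda|t|$ for $s$ between $0$ and $t$) is precisely that argument made explicit. The only point worth stressing in a writeup is that $L^p$-convergence of the series together with the fact that every $V_n(t)$ lives in the same \emph{closed} set $\{\operatorname{dist}(x,\operatorname{supp}u_0)\le\Lambda|t|\}$ yields the support statement for the limit; you note this, so the argument is complete.
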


Moreover, after introducing the scalar product in $L^2(\R;\C^2)$,
\begin{equation*}
\langle \begin{pmatrix}
u_1 \\ v_1
\end{pmatrix}, 
\begin{pmatrix}
u_2 \\ v_2
\end{pmatrix}
\rangle_{(a,b)} :=
\langle \begin{pmatrix}
u_1 \\ v_1
\end{pmatrix}
, \begin{pmatrix}
b^{-1} & 0 \\
0 & a^{-1}
\end{pmatrix}
\begin{pmatrix}
u_2 \\ v_2
\end{pmatrix}
\rangle,
\end{equation*}
we find $iD_a$ to be a self-adjoint operator, which implies the following global $L^2$-estimates.
\begin{proposition}
\label{prop:L2Estimate}
Let $u_0 \in L^2(\R;\C^2)$ and $\partial_t u = i \tilde{D}_a u$, $u(0) = u_0$. Then, we find the following estimate to hold:
\begin{equation}
\label{eq:L2Estimate}
\| u(t) \|_{L^2(\R;\C^2)} \lesssim \| u_0 \|_{L^2(\R;\C^2)}.
\end{equation}
\end{proposition}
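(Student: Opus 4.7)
The plan is to promote the symmetry computation that is hinted at in the passage just before the proposition to a quantitative $L^2$ bound via the equivalence of the weighted inner product $\langle\cdot,\cdot\rangle_{(a,b)}$ with the standard one. Concretely, I would show that $i\tilde D_a$ is skew-symmetric on a suitable core (equivalently, $\tilde D_a$ is symmetric) with respect to $\langle\cdot,\cdot\rangle_{(a,b)}$, upgrade this to skew-self-adjointness using the group generation from Proposition \ref{prop:TransportGroupPerturbation}, and conclude that the group $(e^{it\tilde D_a})_{t\in\R}$ is an isometry in the weighted norm.

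For the symmetry step, take $\phi=(u_1,v_1)^\top$ and $\psi=(u_2,v_2)^\top$ in $W^{1,2}(\R;\C^2)$, let $M=\mathrm{diag}(b^{-1},a^{-1})$, and compute
\begin{equation*}
\langle i\tilde D_a\phi,\psi\rangle_{(a,b)}=\int_\R\bigl[-i(\partial_x v_1)\overline{u_2}+i(\partial_x u_1)\overline{v_2}\bigr]\,dx,
\end{equation*}
where the weights $b^{-1}$ and $a^{-1}$ cancel exactly against the coefficients $b$ and $a$ in $\tilde D_a$. Integrating by parts (boundary terms vanish by density of $C_c^\infty$) and comparing with the analogous expression for $\langle\phi,i\tilde D_a\psi\rangle_{(a,b)}$, which after the same cancellation reads $\int_\R[iu_1\overline{\partial_x v_2}-iv_1\overline{\partial_x u_2}]\,dx$, one gets $\langle i\tilde D_a\phi,\psi\rangle_{(a,b)}=-\langle\phi,i\tilde D_a\psi\rangle_{(a,b)}$. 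Thus $i\tilde D_a$ is skew-symmetric in the weighted scalar product, and since Proposition \ref{prop:TransportGroupPerturbation} already furnishes a $C_0$-group, the closure is skew-self-adjoint (alternatively, by Stone's theorem applied to $\tilde D_a$ on the Hilbert space $(L^2(\R;\C^2),\langle\cdot,\cdot\rangle_{(a,b)})$).

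The remaining step is quantitative. Skew-self-adjointness implies that $\|u(t)\|_{(a,b)}=\|u_0\|_{(a,b)}$ for every $t$. The ellipticity hypothesis \eqref{eq:Ellipticity} gives the two-sided bound
\begin{equation*}
\Lambda^{-1}\|w\|_{L^2}^2\leq \|w\|_{(a,b)}^2\leq \lambda^{-1}\|w\|_{L^2}^2,\qquad w\in L^2(\R;\C^2),
\end{equation*}
which, combined with conservation, yields $\|u(t)\|_{L^2}^2\leq(\Lambda/\lambda)\|u_0\|_{L^2}^2$ uniformly in $t\in\R$, establishing \eqref{eq:L2Estimate} with an implicit constant depending only on $\lambda,\Lambda$.

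The only delicate point is ensuring that the symmetry calculation, carried out formally on smooth compactly supported data, actually upgrades to skew-self-adjointness of the closure and not merely skew-symmetry. Here one can invoke Proposition \ref{prop:TransportGroupPerturbation}, which shows that $i\tilde D_a$ generates a $C_0$-group on $L^p$ for every $p\in[1,\infty)$; specializing to $p=2$ and combining with skew-symmetry identifies the generator as skew-self-adjoint, so no additional domain analysis is needed. Everything else is a direct integration-by-parts computation together with the elementary norm equivalence from \eqref{eq:Ellipticity}.
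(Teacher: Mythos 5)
Your proposal is correct and follows exactly the route the paper gestures at in the paragraph preceding the proposition: introduce the weighted inner product $\langle\cdot,\cdot\rangle_{(a,b)}$, observe that the weights cancel the coefficients so $\tilde D_a$ is symmetric (equivalently $i\tilde D_a$ is skew-symmetric), upgrade to (skew-)self-adjointness via the group generation from Proposition~\ref{prop:TransportGroupPerturbation}, and then use the ellipticity-driven equivalence $\Lambda^{-1}\|\cdot\|_{L^2}^2\leq\|\cdot\|_{(a,b)}^2\leq\lambda^{-1}\|\cdot\|_{L^2}^2$ to convert conservation of the weighted norm into the stated $L^2$ bound. The only cosmetic discrepancy is that the paper writes ``$iD_a$ is self-adjoint'' where the precise assertion (which your computation makes explicit) is that $\tilde D_a$ is self-adjoint and $i\tilde D_a$ is skew-adjoint in the weighted space; your filled-in details are consistent and complete.
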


In subsequent sections we need the following properties of the operators $D_a$ and $L$:
\begin{proposition}
\label{prop:OperatorNorms}
There is $N=N(d)$ such that we find the following estimate to hold:
\begin{equation}
\label{eq:SobolevEmbedding}
\| (1+L)^{-N} \|_{L^1(\R^d) \to L^\infty(\R^d)} \leq C(\lambda,\Lambda).
\end{equation}
\end{proposition}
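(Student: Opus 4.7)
Plan: The strategy is to establish an ultracontractive estimate $\|e^{-tL}\|_{L^1 \to L^\infty} \leq C(\lambda,\Lambda) t^{-d/2}$ for the heat semigroup generated by $L$, and then to recover the required resolvent bound from the subordination identity
\[
(1+L)^{-N} = \frac{1}{\Gamma(N)} \int_0^\infty t^{N-1} e^{-t} e^{-tL} \, dt.
\]
Since $L$ is block-diagonal, $L = \mathrm{diag}(L_1,L_2)$ with $L_1 = -\sum_{j=1}^d a_{j+d}(x_j) \partial_j (a_j(x_j) \partial_j)$, and similarly for $L_2$, it suffices to treat each scalar operator separately.

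The first step is to set up the correct weighted $L^2$ space making $L_1$ self-adjoint. Because each coefficient depends on only one spatial variable, a direct integration by parts shows that with the weight $w(x) = \prod_{j=1}^d a_{j+d}(x_j)^{-1}$ the operator $L_1$ is nonnegative and self-adjoint on $L^2(\R^d, w\,dx)$, with Dirichlet form
\[
\langle L_1 u, u\rangle_{L^2(w)} = \sum_{j=1}^d \int_{\R^d} a_j(x_j)\prod_{k\neq j} a_{k+d}(x_k)^{-1}\, |\partial_j u|^2 \, dx,
\]
which, by the ellipticity assumption \eqref{eq:Ellipticity}, is equivalent to $\|\nabla u\|_{L^2}^2$ with constants depending only on $\lambda,\Lambda$. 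The weight $w$ itself is bounded above and below, so weighted and unweighted $L^p$ spaces are equivalent.

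The second step is the classical Nash argument. The Dirichlet form is a positive sum of squared directional derivatives, hence Markovian, so by Beurling--Deny the semigroup $e^{-tL_1}$ is $L^p$-contractive for every $p\in[1,\infty]$. Combining the quadratic-form lower bound with Nash's inequality $\|u\|_2^{2+4/d} \lesssim \|\nabla u\|_2^{2} \|u\|_1^{4/d}$ in the standard way yields ultracontractivity
\[
\|e^{-tL_1}\|_{L^1 \to L^\infty} \leq C(\lambda,\Lambda)\, t^{-d/2}, \qquad t>0,
\]
and the same argument applies to $L_2$ with weight $\prod_j a_j(x_j)^{-1}$. Inserting into the subordination formula, one gets $\|(1+L)^{-N}\|_{L^1 \to L^\infty} \leq C \int_0^\infty t^{N-1-d/2} e^{-t}\,dt$, finite as soon as $N>d/2$, so taking $N = \lceil d/2\rceil+1$ finishes the proof.

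The main obstacle, and essentially the only non-routine point, is identifying the weight $w$ under which $L_1$ becomes a manifestly divergence-form, symmetric operator; once the product structure $w = \prod_j a_{j+d}(x_j)^{-1}$ is spotted (exploiting crucially that $a_{j+d}$ depends only on $x_j$, so $\partial_j$ does not hit the other factors), the rest is standard ultracontractivity theory and does not require any special Lipschitz control on the coefficients beyond the ellipticity constants.
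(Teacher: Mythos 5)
Your argument is correct and self-contained. The key observation---that $L_1 = -\sum_j a_{j+d}(x_j)\partial_j(a_j(x_j)\partial_j)$ becomes a symmetric, bounded and elliptic divergence-form operator on $L^2(w\,dx)$ with $w=\prod_k a_{k+d}(x_k)^{-1}$, precisely because $a_{j+d}w = \prod_{k\ne j}a_{k+d}(x_k)^{-1}$ has no $x_j$-dependence---is the right structural point, and you correctly note that $w$ is bounded above and below by ellipticity so all $L^p$ statements transfer. From there the Dirichlet form is Markovian, the $L^1$/$L^\infty$-contractivity holds, Nash's inequality gives $\|e^{-tL_1}\|_{L^1\to L^\infty}\lesssim t^{-d/2}$, and the Laplace-transform (subordination) representation of $(1+L)^{-N}$ produces the claim for any integer $N>d/2$; the second diagonal block $L_2$ is identical with the roles of $a_j$ and $a_{j+d}$ interchanged.

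The paper states Proposition~\ref{prop:OperatorNorms} without proof, so there is no official argument to compare against. The route the authors appear to have in mind is the one visible in the proof of Proposition~\ref{prop:EquivalenceBesovNorms}: the summands of $L_1$ (resp.\ $L_2$) in different coordinate directions commute, so the heat kernel factorises into one-dimensional kernels, and the one-dimensional Gaussian bounds of \cite[Appendix~A]{BurqPlanchon2006} give the on-diagonal estimate $\|e^{-tL}\|_{L^1\to L^\infty}\lesssim t^{-d/2}$, after which one subordinates exactly as you do. Your version replaces the appeal to one-dimensional Aronson/Gaussian bounds by the weighted self-adjointness plus Nash. The payoff is a shorter, fully elementary argument that needs no off-diagonal kernel information and no external reference; the cost is that you do not get the pointwise Gaussian kernel bound itself, which is used elsewhere in the paper (e.g.\ for the equivalence of Besov norms and the Davies--Gaffney estimates in Section~\ref{section:BochnerRiesz}), so one cannot simply discard the heat-kernel viewpoint globally. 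For the purpose of Proposition~\ref{prop:OperatorNorms} alone, however, your proof is complete.

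One small remark: when you quote Nash as $\|u\|_2^{2+4/d}\lesssim\|\nabla u\|_2^2\,\|u\|_1^{4/d}$ and conclude the on-diagonal bound, you implicitly need the $L^1$-contraction of $e^{-tL_1}$ along the flow before integrating the differential inequality; you did assert $L^p$-contractivity from Beurling--Deny, so this is covered, but it is worth saying explicitly that the $L^1$-contraction is what freezes $\|u_t\|_1\le\|u_0\|_1$ in the Nash iteration. With that made explicit, nothing is missing.
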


We shall also make use of Littlewood-Paley theory associated with $L^{\frac{1}{2}} = |D_L|$. For this purpose, we consider a radially decreasing function 
\begin{equation*}
\phi(\xi) \equiv 1 \text{ for } |\xi| \leq 1, \quad \text{supp} (\phi) \subseteq B(0,2).
\end{equation*}
Let $\psi(\xi) = \phi(\xi/2) - \phi(\xi)$ and $\psi_k(\xi) = \psi(2^{-k} \xi)$ such that
\begin{align*}
\phi(\xi) + \sum_{k \in \N} \psi_k(\xi) = 1, \quad \sum_{k \in \Z} \psi_k(\xi) \equiv 1 \quad (\xi \neq 0).
\end{align*}
In view of the previous paragraph, we note that $\phi = \phi^s$ and $\Psi = \Psi^s$. We have the following square function estimate:
\begin{proposition}
\label{prop:SquareFunctionEstimate}
Let $1 < p < \infty$. Then, we find the following estimate to hold
\begin{equation}
\label{eq:SquareFunctionEstimate}
\| f \|_{L^p(\R^d)} \sim \| \big( \sum_{k \in \Z} | \psi_k(\sqrt{D_a^2}) f |^2 \big)^{\frac{1}{2}} \|_{L^p(\R^d)}
\end{equation}
with implicit constant only depending on $d$, $p$, and $ \| a \|_{C^{0,1}}$.
\end{proposition}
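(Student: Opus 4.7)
The plan is to combine Khintchine's inequality with a Calder\'on--Zygmund argument. By Rademacher randomization, \eqref{eq:SquareFunctionEstimate} together with its dual reverse inequality reduces to showing the uniform $L^p$-bound
\[
\Bigl\| \sum_{k \in \Z} \epsilon_k \psi_k(\sqrt{D_a^2}) f \Bigr\|_{L^p(\R^d)} \lesssim \| f \|_{L^p(\R^d)}
\]
for every choice of signs $(\epsilon_k)_{k \in \Z} \in \{-1,+1\}^\Z$, with implicit constant independent of the signs; denote this randomized operator by $T_\epsilon$.

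The $L^2$-boundedness of $T_\epsilon$ comes essentially for free: since $iD_a$ is self-adjoint in the weighted inner product $\langle \cdot, \cdot \rangle_{(a,b)}$ introduced before Proposition \ref{prop:L2Estimate}, the spectral theorem supplies a bounded Borel functional calculus; the multiplier $\sum_k \epsilon_k \psi(2^{-k} \cdot)$ is uniformly bounded, and equivalence of the weighted and unweighted $L^2$ norms from \eqref{eq:Ellipticity} yields $\| T_\epsilon \|_{L^2 \to L^2} \lesssim 1$ uniformly in $\epsilon$.

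The core step is to verify a H\"ormander integral condition on the kernel $K_\epsilon$ of $T_\epsilon$. Using the Phillips calculus of Definition \ref{def:PhillipsCalculus} and rescaling,
\[
\psi_k(\sqrt{D_a^2}) = \frac{2^{kd}}{(2\pi)^d} \int_{\R^d} \widehat{\psi}^s(\xi)\, \exp\bigl( i\, 2^{-k} \xi \cdot D_a \bigr)\, d\xi,
\]
and the sharp finite speed of propagation of Lemma \ref{lem:FiniteSpeedPropagation} localizes the kernel of $\exp(i\, 2^{-k} \xi \cdot D_a)$ to $|x-y| \lesssim 2^{-k} |\xi|$. Combined with the rapid decay of $\widehat{\psi}^s$ and a Sobolev-type conversion of $L^2$-bounds into pointwise bounds via Proposition \ref{prop:OperatorNorms} (inserting sufficiently many factors of $(1+L)^{-N}$ and using the $L^2$-boundedness of their inverses on the range of $\psi_k(\sqrt{D_a^2})$), this should yield a pointwise estimate
\[
| K^{(k)}_\epsilon(x,y) | \lesssim 2^{kd} \bigl( 1 + 2^k |x-y| \bigr)^{-M},
\]
for every $M$, where $K^{(k)}_\epsilon$ denotes the kernel of $\epsilon_k \psi_k(\sqrt{D_a^2})$. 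Summing dyadically according to the critical scale $2^{-k} \sim |y-y'|$ - using the pointwise bound on the contribution above scale and an integrated-cancellation estimate below scale - then verifies the H\"ormander regularity
\[
\sup_{y,y' \in \R^d} \int_{|x-y| > 2|y-y'|} | K_\epsilon(x,y) - K_\epsilon(x,y') |\, dx \lesssim 1
\]
uniformly in $\epsilon$. The classical Calder\'on--Zygmund theorem then promotes the $L^2$-bound to $L^p$ for $1 < p < \infty$ with constant independent of the signs, and Khintchine together with duality closes the argument.

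The principal obstacle is the regularity of the kernel in the spatial variables: because the coefficients $a_i$ are only Lipschitz, pointwise gradient bounds on $K_\epsilon$ cannot be obtained by differentiating the symbol. Instead one has to exploit that each frequency-localized kernel $K^{(k)}_\epsilon$ behaves like a Calder\'on--Zygmund mollifier at scale $2^{-k}$ via the finite speed of propagation and integration by parts in the Phillips integral representation, and then sum the scales above and below $|y-y'|$ separately; this is the step where the hypothesis that $a \in C^{0,1}$ enters quantitatively through the speed constant and the $L^2$-norm of the commutator $[\psi_k(\sqrt{D_a^2}), x]$.
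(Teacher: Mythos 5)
Your top-level strategy coincides with the paper's: Khintchine's inequality reduces \eqref{eq:SquareFunctionEstimate} to a uniform $L^p$ bound for the randomized multiplier $T_\epsilon = \sum_k \epsilon_k \psi_k(\sqrt{D_a^2})$, and one closes by duality. Where you diverge is in obtaining the $L^p$ bound: the paper simply cites the Mikhlin-type spectral multiplier theorem of \cite{StrkaljWeis2007} for $F(L^{1/2})$, whereas you unfold a direct Calder\'on--Zygmund verification via the Phillips representation, finite speed of propagation (Lemma \ref{lem:FiniteSpeedPropagation}), and the resolvent estimate of Proposition \ref{prop:OperatorNorms}. This is a legitimate and more self-contained route which, carried out in full, amounts to reproving the cited black box; your inventory of ingredients is the right one, and the $L^2$ step via self-adjointness of $iD_a$ in the weighted inner product and equivalence of the weighted and unweighted norms is fine.

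Two details need tightening. First, the rescaled Phillips formula should read $\psi_k(\sqrt{D_a^2}) = \frac{1}{(2\pi)^d}\int_{\R^d}\hat{\psi}^s(\eta)\,\exp\bigl(i\,2^{-k}\eta\cdot\sqrt{D_a^2}\bigr)\,d\eta$; the prefactor $2^{kd}$ in your formula is spurious, as the factor coming from $\widehat{\psi_k}=2^{kd}\hat{\psi}(2^k\cdot)$ is exactly absorbed by the change of variables $\xi=2^{-k}\eta$. Second, it is worth spelling out which dyadic regime in the H\"ormander integral uses which ingredient: at the fine scales $2^{-k}\lesssim|y-y'|$ the pointwise decay $|K^{(k)}_\epsilon(x,y)|\lesssim 2^{kd}(1+2^k|x-y|)^{-M}$ alone already gives a summable contribution, since integrating $|K^{(k)}_\epsilon(x,y)|+|K^{(k)}_\epsilon(x,y')|$ over $|x-y|>2|y-y'|$ produces a factor of order $(2^k|y-y'|)^{d-M}$; it is the coarse scales $2^{-k}\gtrsim|y-y'|$ that genuinely require a modulus of continuity of the type $|K^{(k)}_\epsilon(x,y)-K^{(k)}_\epsilon(x,y')|\lesssim 2^{k}|y-y'|\cdot 2^{kd}(1+2^k|x-y|)^{-M}$. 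Establishing that continuity estimate is the crux when the coefficients are merely Lipschitz: it does not follow from the pointwise bound and finite propagation speed alone, and your pointer to the commutator $[\psi_k(\sqrt{D_a^2}),x]$ acknowledges the difficulty without closing it. Supplying it---for instance via the commutator structure of $\nabla$ with the transport group, or via Gaussian heat-kernel/Davies--Gaffney bounds combined with a bounded $H^\infty$-calculus, as in the framework cited---is exactly the content that the paper outsources to \cite{StrkaljWeis2007}.
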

This follows from $L^q$-boundedness of $F(L^{\frac{1}{2}})$ provided that $F$ satisfies the usual Mikhlin condition (cf. \cite{StrkaljWeis2007}). The square function estimate is concluded by a Rademacher function argument. 

\medskip

It turns out that the Besov norms for structured $L^\infty$-coefficients remain equivalent. Let $(a_i)_{i=1,\ldots,d} \subseteq L^\infty(\R)$ satisfy \eqref{eq:Ellipticity}. Let $L=- \sum_{i=1}^d \partial_{x_i} (a_i(x_i) \partial_{x_i})$. Let $S_L(t) = e^{t L}$ denote the associated heat kernel and $\Delta_j^L = 4^{-j} L S_L(4^{-j})$ the dyadic projection. We define Besov spaces associated with $L$ as 
\begin{equation*}
\| f \|_{\dot{B}^{s,p}_{q,L}} = \big( \sum_{j \in \Z} 2^{sqj} \| \Delta_j^L f \|^q_{L^p} \big)^{\frac{1}{q}}
\end{equation*}

\begin{proposition}
\label{prop:EquivalenceBesovNorms}
Under the above assumptions, for $1<p<\infty$ and $-1<s<1$, we have $\dot{B}^{s,p}_q = \dot{B}^{s,p}_{q,L}$ with equivalence of the norms only depending on the ellipticity constants.
\end{proposition}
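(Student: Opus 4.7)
The plan is to deduce the equivalence from the heat-kernel characterization of Besov spaces associated to an operator whose semigroup satisfies Gaussian upper bounds and H\"older regularity. Concretely, once the heat kernel $p_t^L(x,y)$ of the semigroup generated by $-L$ is shown to satisfy
\[
|p_t^L(x,y)| \lesssim t^{-d/2}\exp\!\big(-c|x-y|^2/t\big)
\]
together with the H\"older-type bound
\[
|p_t^L(x,y)-p_t^L(x,y')| \lesssim (|y-y'|/\sqrt t)^{\alpha}\, t^{-d/2}\exp\!\big(-c|x-y|^2/t\big)
\]
for $|y-y'| \le \sqrt t$ and some $\alpha$ with $|s|<\alpha<1$, and the conservation property $\int p_t^L(x,y)\,dy \equiv 1$, the equivalence $\dot B^{s,p}_{q,L} \cong \dot B^{s,p}_q$ for $1<p<\infty$ is a general principle (Bui--Duong--Yan, Kerkyacharian--Petrushev): both norms admit Calder\'on reproducing formulas built from $\psi(2^{-j}\sqrt L)$ resp.\ $\psi(2^{-j}|D|)$ with $\psi$ a smooth bump, and the Gaussian/H\"older estimates furnish the off-diagonal bounds needed to identify the two scales.

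The two kernel estimates reduce to one spatial dimension by the product structure. Since $L = \sum_{i=1}^d L_i$ with commuting one-dimensional divergence-form operators $L_i = -\partial_{x_i}(a_i(x_i)\partial_{x_i})$, the heat kernel factorizes as $p_t^L(x,y) = \prod_{i=1}^d p_t^i(x_i,y_i)$, and Gaussian and H\"older bounds for $p_t^L$ follow from the corresponding 1D bounds. Aronson's theorem applied to each $L_i$ yields the 1D Gaussian upper bound. H\"older continuity is derived by combining Nash regularity with the Liouville change of variables $\phi_i(x) = \int_0^x a_i(s)^{-1}\,ds$, which is bi-Lipschitz by \eqref{eq:Ellipticity} and conjugates $L_i$ to a bounded-coefficient model on $\R$; standard one-dimensional parabolic regularity then raises the H\"older exponent to any $\alpha<1$, accounting for the full range $|s|<1$. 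Conservation is immediate from the absence of zeroth-order terms.

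I anticipate the main obstacle to be pushing the H\"older exponent arbitrarily close to $1$: the De~Giorgi--Nash--Moser theorem supplies only an unquantified exponent $\alpha_0>0$, so the improvement crucially relies on the one-dimensional structure. An alternative, equally viable route is real interpolation from the first-order identification $\|\sqrt L f\|_{L^p} \sim \|\nabla f\|_{L^p}$ (which holds for $1<p<\infty$ by a 1D Kato argument via the same Liouville transform) and its $L^p$-dual; interpolation between $\dot H^{-1,p}_L \cong \dot H^{-1,p}$ and $\dot H^{1,p}_L \cong \dot H^{1,p}$ then yields the full range $-1<s<1$. In both approaches, $L^p$-boundedness of the relevant spectral multipliers of $L$ is furnished by the Mikhlin multiplier theorem of \cite{StrkaljWeis2007} cited in Proposition \ref{prop:SquareFunctionEstimate}.
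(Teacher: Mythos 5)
Your main route is essentially the same as the paper's: factorize the heat kernel using the commutativity of the one-dimensional operators $L_i = -\partial_{x_i}(a_i(x_i)\partial_{x_i})$, reduce the heat-kernel regularity to the 1D case, and then appeal to a general principle identifying the $L$-adapted Besov scale with the classical one. Where you diverge in detail: the paper uses the actual first-derivative Gaussian bounds $|\partial_{x_i}K_L|+|\partial_{y_i}K_L|\lesssim t^{-d/2-1/2}e^{-|x-y|^2/t}$ and $|LK_L|\lesssim t^{-d/2-1}e^{-|x-y|^2/t}$, which are what the 1D theory in Burq--Planchon's Appendix~A in fact delivers (in dimension one the flux $a\partial_x u$ is continuous, so one gets a genuine derivative bound, not just $\alpha$-H\"older for $\alpha<1$); it then concludes by the specific argument of Burq--Planchon, Appendix~A.2, rather than through the Bui--Duong--Yan or Kerkyacharian--Petrushev machinery. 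Your concern about pushing the H\"older exponent past $|s|$ is well placed and your resolution via the 1D Liouville transform is the right idea; it is actually overkill, since the Lipschitz bound is available directly and removes the issue entirely.

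One caution about your proposed alternative: the claim that $\|\sqrt{L}f\|_{L^p}\sim\|\nabla f\|_{L^p}$ holds for all $1<p<\infty$ is not an immediate consequence of the Kato square root estimate stated in the paper, which is only at $p=2$. An $L^p$ Riesz-transform bound for divergence-form operators with merely $L^\infty$ coefficients requires a separate argument; in the present 1D-product setting it is plausible via the Liouville change of variables, but it is not a two-line reduction, and the Mikhlin multiplier theorem for $F(\sqrt{L})$ does not by itself give boundedness of $\nabla L^{-1/2}$. The heat-kernel route you lead with is the cleaner path and matches the paper.
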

\begin{proof}
In the present case, the heat kernel factors as a consequence of
\begin{equation*}
[\partial_{x_i} (a_i(x_i) \partial_{x_i}), \partial_{x_j} (a_j(x_j) \partial_{x_j})] = 0.
\end{equation*}
Since the operators $(\partial_{x_i}(a_i(x_i) \partial_{x_i})_{i}$ are commuting, the properties of the one-dimensional heat kernel of $L_i = - \partial_{x_i} (a_i(x_i) \partial_{x_i})$ are inherited for $L$:
\begin{equation*}
K_L(x,y,t) = S_L(t)(x,y) = e^{t L}(x,y) = \prod_{i=1}^d e^{t L_i}(x_i,y_i).
\end{equation*}
The one-dimensional case was discussed in detail in \cite[Appendix~A]{BurqPlanchon2006}.
By the properties of the one-dimensional kernel, there exists $c$ depending only on the ellipticity constants such that
\begin{equation}
\label{eq:HeatKernelEstimate}
|K_L(x,y,t)| \lesssim t^{-\frac{d}{2}} e^{- \frac{|x-y|^2}{t}}.
\end{equation}
Moreover,
\begin{equation*}
|\partial_{y_i} K_L(x,y,t)| + |\partial_{x_i} K_L(x,y,t)| \lesssim t^{-\frac{d}{2}-\frac{1}{2}} e^{-\frac{|x-y|^2}{t}}
\end{equation*}
and
\begin{equation*}
|L K_L(x,y,t)| \lesssim t^{-\frac{d}{2}-1} e^{-\frac{|x-y|^2}{t}}.
\end{equation*}
By the Gaussian bounds, it follows that $S_L(t)$ is continuous on $L^p$, as well as $\Delta_j^L = 4^{-j} L S_L(4^{-j})$. By the kernel estimate, we can derive equivalence of Besov norms as in \cite[Appendix~A.2]{BurqPlanchon2006}.
\end{proof}
\begin{remark}
We remark that the argument strongly hinges upon the structure of the operator. For general elliptic operators with H\"older coefficients the equivalence of Besov norms fails.
\end{remark}
We shall also need the following square function estimate, which even holds for general elliptic $L^\infty$-coefficients (cf. \cite{AuscherHofmannLaceyMcIntoshTchamitchian2002}):
\begin{proposition}[One-dimensional Kato square-root estimate]
\label{prop:SquareRootEstimate}
Under the above assumptions, we have
\begin{equation}
\label{eq:KatoSquareFunctionEstimate}
\| |D_L| f \|_{L^2} \sim_{\lambda,\Lambda} \| f \|_{\dot{H}^1}.
\end{equation}
\end{proposition}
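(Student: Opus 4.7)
\medskip

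\noindent\textbf{Proof plan.} The estimate is purely an $L^{2}$ identity that reduces to a quadratic form computation; the deep AHLMT solution of the Kato conjecture cited in the statement is only needed if one wants the same conclusion without the diagonal structure or the self-adjointness. In the present set-up one can give a direct proof.

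First, I would exploit that $L = \sum_{i=1}^d L_i$ with $L_i := -\partial_{x_i}(a_i(x_i)\partial_{x_i})$ is formally self-adjoint on $C_c^\infty(\R^d)$: for $f,g \in C_c^\infty(\R^d)$ integration by parts yields
\begin{equation*}
\langle L_i f, g\rangle_{L^2} \;=\; \int_{\R^d} a_i(x_i)\,\partial_{x_i} f(x)\,\overline{\partial_{x_i} g(x)}\,dx.
\end{equation*}
Summing in $i$, the associated sesquilinear form $\mathfrak{a}(f,g) = \sum_i \int a_i\,\partial_{x_i}f\,\overline{\partial_{x_i}g}\,dx$ is continuous and, by the ellipticity bound $\lambda \leq a_i \leq \Lambda$, coercive on $\dot H^1(\R^d)\times\dot H^1(\R^d)$. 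Form methods (the KLMN/Kato construction) then produce a non-negative self-adjoint realisation of $L$ whose form domain is exactly $\dot H^1(\R^d)$, and the spectral theorem gives $|D_L|=L^{1/2}$ together with the identity
\begin{equation*}
\||D_L| f\|_{L^2}^2 \;=\; \langle L f, f\rangle \;=\; \mathfrak{a}(f,f).
\end{equation*}

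Second, inserting the ellipticity bounds directly in $\mathfrak{a}(f,f)$ gives
\begin{equation*}
\lambda\,\|\nabla f\|_{L^2}^{2} \;\leq\; \sum_{i=1}^d \int_{\R^d} a_i(x_i)\,|\partial_{x_i} f(x)|^2\,dx \;\leq\; \Lambda\,\|\nabla f\|_{L^2}^{2},
\end{equation*}
with constants depending only on $\lambda,\Lambda$. Combined with the previous identity this is the claimed equivalence $\| |D_L| f\|_{L^2}\sim_{\lambda,\Lambda}\|f\|_{\dot H^1}$, first for $f\in C_c^\infty(\R^d)$ and then for all $f\in\dot H^1(\R^d)$ by density, since $\dot H^1$ is precisely the form domain of $L$.

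There is essentially no obstacle: the only point requiring care is justifying the integration-by-parts formula when $a_i$ is merely $L^\infty$, but this follows routinely by density once one interprets $L$ in the form sense. The one-dimensional Kato square root result of Coifman--McIntosh--Meyer (and the full $d$-dimensional AHLMT theorem) are genuinely deep statements in the non-self-adjoint (accretive) case, but in the self-adjoint diagonal case treated here the equivalence is an immediate consequence of the quadratic form identity plus ellipticity.
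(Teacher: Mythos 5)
Your proof is correct, and your observation cuts to the heart of the matter. The paper does not give its own proof of this statement; it merely cites Auscher--Hofmann--Lacey--McIntosh--Tchamitchian for the estimate, with the remark that it ``even holds for general elliptic $L^\infty$-coefficients.'' You are right that citing the full solution of the Kato conjecture is overkill in the setting at hand: here the coefficients $a_i$ are real-valued and bounded between $\lambda$ and $\Lambda$, so the divergence-form operator $L = -\sum_i \partial_{x_i}(a_i(x_i)\partial_{x_i})$ is non-negative and self-adjoint, and for a non-negative self-adjoint operator the identity $\|L^{1/2}f\|_{L^2}^2 = \langle Lf, f\rangle = \mathfrak{a}(f,f)$ on the form domain is a soft consequence of the spectral theorem (Kato's first representation theorem), not a theorem about singular integrals. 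The ellipticity then gives $\mathfrak{a}(f,f)\sim_{\lambda,\Lambda}\|\nabla f\|_{L^2}^2$ directly, as you write. The deep Kato/Coifman--McIntosh--Meyer/AHLMT results are needed precisely when self-adjointness fails (complex or merely accretive coefficients), which is not the case here. One small point of hygiene: rather than saying the ``form domain is exactly $\dot H^1(\R^d)$'' (which is not a subspace of $L^2$ when $d\ge 3$), it is cleaner to prove the two-sided bound on $C_c^\infty$, where $\|L^{1/2}f\|_{L^2}^2 = \int\sum_i a_i|\partial_{x_i}f|^2\,dx$ holds without ambiguity, and then pass to $\dot H^1$ by density; this does not affect the substance of your argument.
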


\section{Strichartz estimates for Lipschitz coefficients}
\label{section:StrichartzEstimates}
This section is devoted to the proof of Strichartz estimates for structured Lipschitz coefficients. The key ingredient in our proof are as in the constant-coefficient case dispersive estimates. Recall how dispersive estimates imply Strichartz estimates by the following abstract result due to Keel--Tao \cite{KeelTao1998}:
\begin{theorem}[Keel--Tao] \label{thm:KeelTao}
Let $(X,dx)$ be a measure space and $H$ a Hilbert space. Suppose that for each $t \in \R$ we have an
operator $U(t):H \to L^2(X)$ which satisfies the following assumptions for $\sigma>0$:
\begin{itemize}
\item[(i)] For all $t$ and $f \in H$ we have the energy estimate:
\begin{equation*}
\| U(t) f \|_{L^2(X)} \lesssim \|f \|_H.
\end{equation*}
\item[(ii)] For all $t \neq s$ and $g \in L^1(X)$ we have the decay estimate
\begin{equation*}
\| U(s) (U(t))^* g \|_{L^\infty(X)} \lesssim (1+|t-s|)^{-\sigma} \| g \|_{L^1(X)}.
\end{equation*}
\end{itemize}
Then, for $\sigma$\emph{-admissible exponents} $(p,q)$, which satisfy $\frac{1}{p} + \frac{\sigma}{q} \leq \frac{\sigma}{2}$ and $(p,q,\sigma) \neq (2,\infty,1)$, the estimate
\begin{equation*}
\| U(t) f \|_{L^p_{t} L^q_x(\R \times X)} \lesssim \| f \|_H
\end{equation*}
holds. Furthermore, for two $\sigma$-admissible pairs $(p,q)$ and $(\tilde{p},\tilde{q})$, the estimate
\begin{equation}
\label{eq:InhomogeneousEstimateKT}
\| \int_{s<t} U(t) (U(s))^* F(s) ds \|_{L^p_t L_x^q(\R \times X)} \lesssim \| F \|_{L_t^{\tilde{p}'} L_x^{\tilde{q}'}}
\end{equation}
holds true.
\end{theorem}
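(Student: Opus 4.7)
My plan follows the classical $TT^\ast$ scheme. Since $U(t): H \to L^2(X)$, duality makes the desired homogeneous estimate equivalent to a bound on the bilinear form
\[
B(F,G) = \iint_{\R \times \R} \langle U(s)^\ast F(s), U(t)^\ast G(t) \rangle_H \, ds \, dt,
\]
which after one more application of duality reduces to controlling the kernel $U(s) U(t)^\ast : L^{q'}(X) \to L^q(X)$. Hypothesis (i) gives $\|U(s) U(t)^\ast\|_{L^2 \to L^2} \lesssim 1$, while (ii) yields $\|U(s) U(t)^\ast\|_{L^1 \to L^\infty} \lesssim (1+|t-s|)^{-\sigma}$; Riesz--Thorin interpolation then furnishes
\[
\|U(s) U(t)^\ast g\|_{L^q(X)} \lesssim (1+|t-s|)^{-\sigma(1-2/q)} \|g\|_{L^{q'}(X)}, \qquad 2 \le q \le \infty.
\]

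For the non-endpoint regime $p > 2$, I would conclude by Hardy--Littlewood--Sobolev in the time variable: the scaling condition $\tfrac{1}{p} + \tfrac{\sigma}{q} = \tfrac{\sigma}{2}$ is precisely the critical one for convolution with $|t-s|^{-\sigma(1-2/q)}$, and the extra $+1$ in $(1+|t-s|)^{-\sigma}$ only improves matters at large time separation, so Young's inequality handles the tail while HLS takes care of the singular diagonal. This already yields the inhomogeneous estimate \eqref{eq:InhomogeneousEstimateKT} for the non-retarded integral $\int_\R$, and the Christ--Kiselev lemma upgrades it to the retarded integral $\int_{s<t}$ whenever $p \neq \tilde{p}'$.

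The hard part is the double endpoint $p = 2$ (which requires $\sigma > 1$, whence the exclusion $(p,q,\sigma) = (2,\infty,1)$). Here HLS is critical and fails at the boundary, and Christ--Kiselev is likewise unavailable, so I would adopt the Keel--Tao dyadic-in-time decomposition: split $B(F,G) = \sum_{j \in \Z} B_j(F,G)$ according to $|t-s| \sim 2^j$, and estimate each $B_j$ by bilinear interpolation between the $L^2$-energy and $L^1$--$L^\infty$ dispersive bounds, gaining a small exponent on one Lebesgue index at each scale. Summing the $j$-pieces produces a logarithmic divergence at the endpoint, which I would resolve via a Lorentz-space refinement: one proves the bilinear estimate first with $L^{2,1}$--$L^{2,\infty}$ norms on $(F,G)$ and then upgrades to $L^2 \times L^2$ by real interpolation in a scale between two almost-endpoint pairs. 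Applying the same bilinear interpolation scheme directly to the truncated kernel $\mathbf{1}_{s<t} U(s) U(t)^\ast$ yields the retarded inhomogeneous estimate \eqref{eq:InhomogeneousEstimateKT} simultaneously at both endpoints, so no separate Christ--Kiselev step is needed in that case.
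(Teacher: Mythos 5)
This theorem is quoted in the paper as an external result of Keel--Tao \cite{KeelTao1998} with no proof supplied, so there is no in-paper argument to compare against; I will therefore assess your sketch on its own terms. The overall architecture you describe --- $TT^*$ reduction, Riesz--Thorin interpolation of the kernel $U(s)U(t)^*$, Hardy--Littlewood--Sobolev/weak Young for $p>2$, Christ--Kiselev for the retarded non-endpoint estimates (and you correctly note that $\tilde p' < p$ is automatic for admissible pairs except at the double endpoint), and a dyadic-in-time decomposition with bilinear real interpolation at $p=2$ --- is indeed the Keel--Tao scheme.

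There is, however, a substantive inaccuracy in your endpoint paragraph. In \cite{KeelTao1998} the logarithmic divergence from summing the $T_j$ pieces is \emph{not} resolved by weakening the time norm to Lorentz spaces $L^{2,1}_t$--$L^{2,\infty}_t$: the time exponent stays pinned at $2$ throughout, and there is simply no room to move it (at the sharp endpoint the time convolution kernel is $(1+|t-s|)^{-1}$, which convolved against $L^2_t$ only gives weak-type $L^{2,\infty}_t$, and the spatial gain is what rescues the strong-type bound). The actual mechanism is that each $T_j$ obeys $|T_j(F,G)|\lesssim 2^{-j\beta(a,b)}\|F\|_{L^2_t L^{a'}_x}\|G\|_{L^2_t L^{b'}_x}$ for $(a,b)$ in a neighbourhood of $(q,q)$, with $\beta(q,q)=0$ and $\beta$ strictly signed off the diagonal; their abstract bilinear interpolation lemma (Lemma~6.1 in \cite{KeelTao1998}) is then applied in the \emph{spatial} indices $(a,b)$, and the real interpolation produces the Lorentz refinement $L^{q,2}_x$ (resp.\ $L^{q',2}_x$) in space, which is stronger than $L^q_x$ since $q>2$. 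Your last clause (``real interpolation in a scale between two almost-endpoint pairs'') gestures at this, but the $L^{2,1}$--$L^{2,\infty}$ formulation in time is not what makes the argument close and would need to be replaced by the spatial-index version before the sketch could be called a proof. The remaining remarks, including proving the retarded estimate at the double endpoint directly from the same bilinear interpolation applied to the truncated kernel $\mathbf{1}_{s<t}U(s)U(t)^*$, are consistent with \cite{KeelTao1998}.
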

We shall see by Littlewood-Paley decomposition and rescaling that it will be enough to show estimates for unit frequencies. The energy estimate required by Theorem \ref{thm:KeelTao} (i)  is Proposition \ref{prop:L2Estimate}. We have to show the dispersive estimate:
\begin{proposition}
\label{prop:DispersiveEstimateUnitFrequencies}
Let $a_i \in C^{0,1}(\R)$ satisfy \eqref{eq:Ellipticity}. Then, we find the following estimate to hold:
\begin{equation}
\label{eq:DispersiveEstimateUnitFrequencies}
\| e^{it L^{\frac{\ell}{2}}} \psi(\sqrt{D_a^2}) \|_{L^1(\R^d) \to L^\infty(\R^d)} \lesssim (1+|t|)^{-\sigma(\ell)}
\end{equation}
for $\ell \in \{1,2\}$, $0 < t \leq C(\lambda,\Lambda) \max (\| a_i \|_{\dot{C}^{0,1}})$, and
\begin{equation*}
\sigma(\ell) = \begin{cases}
\begin{aligned}
&\frac{d-1}{2}, \quad &&\ell = 1, \\
&\frac{d}{2}, \quad &&\ell = 2.
\end{aligned}
\end{cases}
\end{equation*}
\end{proposition}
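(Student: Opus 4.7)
The plan is to use the Phillips functional calculus of Definition \ref{def:PhillipsCalculus} as a substitute for the Fourier inversion formula that drives the flat dispersive estimate. Since $L^{\ell/2}$ arises from applying $\lambda \mapsto |\lambda|^{\ell}$ to the commuting tuple $\sqrt{D_a^2}=(\sqrt{D_{a_j}^2})_{j=1}^d$, I write
\[
e^{itL^{\ell/2}}\psi(\sqrt{D_a^2}) = m_t(\sqrt{D_a^2}),\qquad m_t(\lambda) = e^{it|\lambda|^\ell}\tilde\psi(\lambda),
\]
where $\tilde\psi$ is a Schwartz function, even in each coordinate, equal to $1$ on the support of $\psi$. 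Definition \ref{def:PhillipsCalculus} then gives the representation
\[
m_t(\sqrt{D_a^2}) = \frac{1}{(2\pi)^d}\int_{\R^d}\hat m_t(\xi)\, e^{i\xi.D_a}\, d\xi,
\]
reducing the problem to an estimate on the scalar kernel $\hat m_t$ paired with a pointwise description of the Dirac transport group $e^{i\xi.D_a}$.

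On the multiplier side, $\hat m_t$ is exactly the flat, unit-frequency half-wave (respectively Schr\"odinger) propagator kernel, so standard stationary phase yields, for any $N\in\N$,
\[
|\hat m_t(\xi)| \lesssim (1+|\xi|)^{-(d-1)/2}\bigl(1+\bigl||\xi|-|t|\bigr|\bigr)^{-N}\qquad(\ell=1),
\]
\[
|\hat m_t(\xi)| \lesssim (1+|t|)^{-d/2}\bigl(1+|\xi|^2/(1+|t|)\bigr)^{-N}\qquad(\ell=2),
\]
reflecting cone concentration in the half-wave case and the stationary point $\lambda=\xi/(2t)$ in the Schr\"odinger case.

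For the transport group, I would invoke the diagonalization of Proposition \ref{prop:TransportGroupPerturbation} factor by factor: each $e^{i\xi_j D_{a_j}}$ is conjugate through an ellipticity-bounded matrix to a pair of variable-speed transport groups with speed $(a_ja_{j+d})^{1/2}(x_j)$, whose flows are the diffeomorphisms $\chi_j$ of Proposition \ref{prop:PropertiesTransport}, modulo an $L^\infty$-bounded zeroth-order perturbation $E_j$. The principal part of $K_\xi(x,y)$ is therefore a matrix-valued product of delta functions concentrated on the characteristics $\chi_j(\pm\xi_j,x_j)=y_j$, with ellipticity-bounded weights. Ellipticity makes the characteristic map a global $C^1$-diffeomorphism with $|\xi(x,y)|\sim|x-y|$, so inserting it into the Phillips representation collapses the $\xi$-integral at the characteristic point and produces
\[
|K_t^{\mathrm{pr}}(x,y)| \lesssim |\hat m_t(\xi(x,y))| \lesssim (1+|t|)^{-\sigma(\ell)}
\]
uniformly in $x,y$, exactly as required. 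The trivial regime $|t|\lesssim 1$, where $(1+|t|)^{-\sigma(\ell)}\sim 1$, follows directly by factoring through $L^2$ via Proposition \ref{prop:OperatorNorms} together with the energy estimate of Proposition \ref{prop:L2Estimate}.

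The main obstacle is the contribution of the zeroth-order perturbations $E_j$ and of the diagonalization conjugators, none of which is supported on the characteristic. I would treat these by a Duhamel expansion in $\xi$-time: each application of $E_j$ gains at most a factor $\max_i\|a_i\|_{\dot C^{0,1}}$, respects finite speed of propagation (Lemma \ref{lem:FiniteSpeedPropagation}), and, once integrated against $\hat m_t$, remains controlled by the rapid off-cone (respectively off-parabolic) decay of $\hat m_t$, so the error kernel is absorbed into a constant times the principal bound. The time restriction $t\le C(\lambda,\Lambda)\max_i\|a_i\|_{\dot C^{0,1}}$ in the statement is precisely what makes this Duhamel series summable with a pointwise-kernel bound uniform in $x,y$; verifying that summability with constants independent of $t$ is the delicate technical step.
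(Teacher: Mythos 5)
Your high-level strategy matches the paper: reduce via the Phillips functional calculus to a superposition $\int \hat m_t(\xi)\,e^{i\xi.D_a}\,d\xi$ of transport-group pieces against the flat unit-frequency kernel, then exploit the diagonalization of $e^{i\xi_j D_{a_j}}$ into variable-speed transport plus a bounded zeroth-order error $E_j$, absorbing the error via a Duhamel iteration that converges on a short $\xi$-window controlled by $\|a_i\|_{\dot C^{0,1}}$. But there is a genuine gap: the step you flag as delicate is precisely where your proposal and the paper diverge, and you have not closed it.

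The paper avoids pointwise delta-function kernel estimates entirely. It pulls out the multiplier by its supremum, $\sup_\xi |K_t(\xi)| \lesssim (1+|t|)^{-\sigma(\ell)}$, and then reduces (after an integration-by-parts localization to $|\xi|\leq T$) to the multiplier-free transport bound
\begin{equation*}
\sup_{x\in\R^d}\int_{|\xi|\leq T}\big|e^{i\xi.D_a}u_0(x)\big|\,d\xi \;\lesssim\; \|u_0\|_{L^1(\R^d)}.
\end{equation*}
Commutativity of $\sqrt{D_{a_j}^2}$ turns this into a one-parameter estimate, which is proved by changing basis and noting that the pure transport part $T_t v_0(x)=v_0(\chi(t,x))$ satisfies $\int_\R |T_t v_0(x)|\,dt\leq C\|v_0\|_{L^1}$ by the diffeomorphism property; the Duhamel error is then absorbed by Fubini once $T$ is taken small relative to $\|E\|_{L^1\to L^1}^{-1}$. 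In contrast, your proposal tries to write the kernel of $e^{i\xi.D_a}$ as a product of delta measures on the characteristics and then evaluate $\hat m_t$ at the characteristic point $\xi(x,y)$. This is not how the paper's argument runs, and it is substantially harder to make rigorous: the iterated Duhamel terms $T_{\xi-s_1}E\,T_{s_1-s_2}E\,\dots$ do not have clean delta-function structure, so a pointwise kernel bound for the error series is a significantly more difficult object than the $L^1_\xi$-integral the paper actually estimates. You acknowledge this is "the delicate technical step," but leave it open; without it your argument is incomplete, while the paper's $L^1_\xi$ route dispenses with exactly this difficulty.

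A secondary inaccuracy: your stated bound for the frequency-localized Schr\"odinger kernel, $|\hat m_t(\xi)|\lesssim (1+|t|)^{-d/2}(1+|\xi|^2/(1+|t|))^{-N}$, has the wrong concentration shape. Since $\psi$ lives at unit frequencies, the stationary point is at $y=-\xi/(2t)$ with $|y|\sim 1$, so rapid decay sets in when $|\xi|\not\sim|t|$, i.e., the bound should be of the form $(1+|t|)^{-d/2}(1+||\xi|-|t||)^{-N}$, as in the paper's \eqref{eq:UnitFrequencyKernel}. This does not affect the supremum in $\xi$, which is all the paper needs, but it would matter if you push the pointwise concentration argument further.
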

\begin{proof}
By Phillips functional calculus, we have to prove
\begin{equation}
\label{eq:PhillipsReduction}
\sup_{x \in \R^d} \big| \int K_t(\xi) e^{i \xi.D_a} u_0(x) d\xi \big| \lesssim (1+|t|)^{-\sigma(\ell)} \| u_0 \|_{L^1(\R^d)},
\end{equation}
where
\begin{equation*}
K_t(\xi) = \frac{1}{(2\pi)^d} \int_{\R^d} e^{it|y|^\ell} \psi(y) e^{i y. \xi} dy
\end{equation*}
denotes the kernel of the half-wave equation for $\ell =1 $ or the Schr\"odinger equation for $\ell = 2$ at unit frequencies. Note that $K_t(\xi) = K_t^s(\xi)$ and Definition \ref{def:PhillipsCalculus} applies. Recall that
\begin{equation}
\label{eq:UnitFrequencyKernel}
|K_t(\xi)| \lesssim
\begin{cases}
\begin{aligned}
&(1+|t|+|\xi|)^{-N}, \quad &&|t| \not \sim |\xi|, \\
&(1+|t|)^{-\sigma(\ell)}, \quad &&|t| \sim |\xi|
\end{aligned}
\end{cases}
\end{equation}
by (non-)stationary phase (cf. \cite[Chapter~1]{Sogge2017}). Hence, to show the dispersive estimate \eqref{eq:DispersiveEstimateUnitFrequencies}, it suffices to prove
\begin{equation*}
\int_{\R^d} \big| e^{i \xi.D_a} f(x) \big| d\xi \lesssim \int_{\R^d} |f(x)| dx.
\end{equation*}
We show this after suitable localization.

By the $L^2$-estimate, Phillips functional calculus, and the rapid decay of the kernel \eqref{eq:UnitFrequencyKernel} for $|\xi| \gg |t|$, we can localize the integral in \eqref{eq:PhillipsReduction} to $|\xi| \leq T$: Let $\chi_T \in C^\infty_c(B(0,2T))$ be a radially decreasing function with $\chi_T(\xi) = 1$ for $\xi \in B(0,T)$. Let $\eta_T = 1-\chi_T$. We estimate by integration by parts, the $L^2$-estimate from Proposition \ref{prop:L2Estimate}, and the resolvent estimate from Proposition \ref{prop:OperatorNorms}:
\begin{equation*}
\begin{split}
\big\| \int_{\R^d} \eta_T(\xi) K_t(\xi) e^{i \xi.D_a} u(x) d\xi \|_{L^\infty(\R^d)} &\lesssim \big\| \int_{\R^d} \eta_T(\xi) K_t(\xi) (1-\Delta_{\xi})^{2N} (1+L)^{-2N} e^{i \xi.D_a} u(x) d\xi \big\|_{L^\infty} \\
&\lesssim \big\| \int_{\R^d} (1-\Delta_{\xi})^N ( \eta_T(\xi) K_t(\xi)) e^{i \xi.D_a} (1+L)^{-N} u(x) d\xi \big\|_{L^2} \\
&\lesssim  \int_{|\xi| \geq T} (1+ |\xi|)^{-N} d \xi \|(1+L)^{-N} u \|_{L^2} \\
&\lesssim (1+T)^{-M} \| u \|_{L^1}.
\end{split}
\end{equation*}
Thus, it is enough to show the estimate
\begin{equation}
\label{eq:DispersiveEstimateFunctionalCalculusRedux}
\sup_{x \in \R^d} \big| \int_{|\xi| \leq T} K_t(\xi) e^{i \xi.D_a} u_0(x) d \xi \big| \lesssim (1+|t|)^{-\sigma(\ell)} \| u_0 \|_{L^1(\R^d)},
\end{equation}
which follows from
\begin{equation}
\label{eq:DispersiveEstimateII}
\sup_{x \in \R^d} \int_{|\xi| \leq T} \big| e^{i \xi.D_a} u_0(x)  \big| d\xi \lesssim \| u_0 \|_{L^1(\R^d)}.
\end{equation}

By Lemma \ref{lem:FiniteSpeedPropagation}, we can suppose that $\text{supp}(u_0) \subseteq B(x,CT)$ to show \eqref{eq:DispersiveEstimateII}. Furthermore, by commutativity of the Dirac operators under the radial frequency constraint, we can write
\begin{equation}
\label{eq:Iteration}
\int_{B(0,T)} \big| e^{i \xi.D_a} u_0(x) d\xi \big| \leq \int_{-T}^T d\xi_2 \ldots \int_{-T}^T d\xi_d \big( \int_{-T}^T d\xi_1 \big| e^{i \xi_1.D_{a_1}} \big( e^{i \xi'.D_{a}'} u_0(x_1,x') \big) \big| \big)
\end{equation}
with $\xi' = (\xi_2,\ldots,\xi_d)$ and $D_a' = (D_{a_2},\ldots,D_{a_d})$.
It suffices to prove the estimate for a one-parameter group as the estimate for the above expression then follows by iteration. Thus, the proof will be complete once we show the following estimate:
\begin{equation}
\label{eq:TransportPropertyDiracOperator}
 \int_{-T}^T |e^{it \tilde{D}_a} u_0(x)| dt \lesssim \| u_0 \|_{L^1(\R)}
\end{equation}
for $u_0 \in L^1(\R)$ and $\text{supp}(u_0) \subseteq B(x,CT)$. We write $u(t,x) = e^{it D_a} u_0(x)$ and with the notations from the proof of Proposition \ref{prop:TransportGroupPerturbation}, we find by change of basis
\begin{equation}
\label{eq:ChangeBasis}
\int_{-T}^T |u(t,x)| dt \lesssim \int_{-T}^T |v(t,x)| dt
\end{equation}
with
\begin{equation*}
v(t,x) = T_t v_0(x) + \int_0^t T_{t-s} Ev(s,x) ds, 
\end{equation*}
where $T_t = \exp \text{diag}(t (ab)^{1/2}(x) \partial_x, -t (ab)^{1/2}(x) \partial_x)$. We can write $T_t v_0(x) = v_0(\chi(t,x))$ with $\chi(x,\cdot)$ a $C^{1}$-diffeomorphism and
\begin{equation*}
\int_{\R} |T_t v_0(x)| dt \leq C(\lambda, \Lambda) \| v_0 \|_{L^1(\R;\C^2)}.
\end{equation*}
Hence, integration in time of $v(t,x)$ yields
\begin{equation}
\label{eq:TimeIntegration}
\int_{-T}^T |v(t,x)| dt \leq C(\lambda,\Lambda) \| v_0 \|_{L^1(\R;\C^2)} + \int_{-T}^T dt \int_{-T}^T ds |Ev(s,\chi(t-s,x))|.
\end{equation}
We estimate the second term by the diffeomorphism property of $\chi(x,\cdot-s)$, Fubini's theorem, and $L^1$-boundedness of $E$:
\begin{equation}
\label{eq:ErrorEstimateI}
\begin{split}
\int_{-T}^T dt \int_{-T}^T ds |Ev(s,\chi(x,t-s))| &\leq C(\lambda,\Lambda) \int_{-T}^T ds \| Ev(s) \|_{L^1} \\
&\leq C(\lambda,\Lambda) \int_{-T}^T ds \| E \|_{L^1 \to L^1} \| v(s) \|_{L^1_x} \\
&\leq C(\lambda,\Lambda) \| E \|_{L^1 \to L^1} \int_{-T}^T ds \int_{-CT}^{CT} dx' |v(s,x+x')| \\
&\leq C(\lambda,\Lambda)\| E \|_{L^1 \to L^1} CT \sup_x \| v(t,x) \|_{L_t^1([-T,T])}.
\end{split}
\end{equation}
We use the bound for $\| E \|_{L^1 \to L^1}$ from \eqref{eq:LpBoundE}: Let $T= \frac{1}{2 \| E \|_{L^1 \to L^1} \cdot C \cdot C(\lambda,\Lambda)}$ such that we can absorb \eqref{eq:ErrorEstimateI} into the right-hand side of \eqref{eq:TimeIntegration} to find
\begin{equation*}
\int_{-T}^T | v(t,x) | dt \lesssim \| v_0 \|_{L^1(\R^2;\C)}.
\end{equation*}
Conclusively, \eqref{eq:TransportPropertyDiracOperator} follows from \eqref{eq:ChangeBasis}, the previous estimate, and a final change of basis. The proof is complete.
\end{proof}

In the following we show the dispersive estimate for arbitrary finite times. We start with the following growth bound as consequence of Gr\o nwall's lemma:
\begin{lemma}
\label{lem:LongDispersiveEstimate}
Let $u(t,x) = e^{it D_a} u_0$. Then, we find the following estimate to hold:
\begin{equation*}
\| u(t) \|_{L^1} \leq e^{C(\| a_i \|_{\dot{C}^{0,1}},\lambda,\Lambda) t} \| u_0 \|_{L^1}.
\end{equation*}
\end{lemma}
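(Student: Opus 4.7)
The plan is to run the Gr\o nwall analogue of the short-time absorption argument that appears in \eqref{eq:TimeIntegration}--\eqref{eq:ErrorEstimateI}, yielding an exponentially growing $L^1$-bound valid for all finite times rather than just for the specific small $T$ chosen there. By the iteration over coordinates of \eqref{eq:Iteration}, it suffices to treat the one-dimensional case.

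Set $v = M^{-1} u$ with the pointwise diagonalizing matrix $M$ from the proof of Proposition \ref{prop:TransportGroupPerturbation}. Since $M$ and $M^{-1}$ are bounded on $L^1(\R;\C^2)$ with constants depending only on $\lambda, \Lambda$, it suffices to bound $\|v(t)\|_{L^1}$. Duhamel's formula gives
\begin{equation*}
v(t) = T_t v_0 + \int_0^t T_{t-s}(E v(s))\, ds,
\end{equation*}
where $T_t$ is the diagonal transport group generated by $B = \text{diag}((ab)^{1/2}\partial_x,-(ab)^{1/2}\partial_x)$ and $E$ is $L^1$-bounded with norm at most $C(\lambda,\Lambda) \max_i \|a_i\|_{\dot{C}^{0,1}}$ by \eqref{eq:LpBoundE}.

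The key point is that $T_t$ is \emph{uniformly} bounded on $L^1$, with constant depending only on $\lambda, \Lambda$: by Proposition \ref{prop:PropertiesTransport}, $T_t v_0(x) = v_0(\chi(\pm t, x))$, and the Jacobian $\partial_x \chi(t,x) = a(\chi(t,x))/a(x)$ lies in $[\lambda/\Lambda, \Lambda/\lambda]$ uniformly in $t$. Taking $L^1$-norms in the Duhamel formula and applying Gr\o nwall's inequality immediately yields
\begin{equation*}
\|v(t)\|_{L^1} \leq C(\lambda,\Lambda)\, e^{C(\|a_i\|_{\dot{C}^{0,1}},\lambda,\Lambda)|t|}\, \|v_0\|_{L^1},
\end{equation*}
and undoing the change of basis finishes the proof.

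I anticipate no substantial new obstacle: the only technical point beyond what was used in Proposition \ref{prop:DispersiveEstimateUnitFrequencies} is the uniform-in-time $L^1$-boundedness of the transport group, which is precisely what prevents the Gr\o nwall constant from depending on $t$ itself and which explains why the constant $C$ in the statement depends on $\|a_i\|_{\dot{C}^{0,1}}$ but the exponent is linear in $t$.
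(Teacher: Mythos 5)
Your argument reproduces the paper's proof: change of basis via $M^{-1}$, Duhamel with the transport group $T_t$, uniform $L^1$-boundedness of $T_t$ from the Jacobian bound $\lambda/\Lambda \leq \partial_x\chi \leq \Lambda/\lambda$ together with the $L^1$-bound on $E$ from \eqref{eq:LpBoundE}, Gr\o nwall, and undoing the change of basis. One small caution: the lemma is already a one-dimensional statement (as in its application inside Lemma \ref{lem:GrowthBound}, where it is invoked for $e^{it\tilde D_a}$), so the preliminary remark that \eqref{eq:Iteration} reduces to one dimension is unnecessary and, more importantly, would not actually be available for a \emph{fixed-time} $L^1$ bound: the factorization there relies on commutativity of the $D_{a_j}$ under integration against coordinate-even kernels (cf.\ Definition \ref{def:PhillipsCalculus}), and the $D_{a_j}$ do not commute as operators.
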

\begin{proof}
Starting with the representation by Duhamel's formula after change of basis
\begin{equation*}
v(t,x) = T_t v_0(x) + \int_0^t T_{t-s} Ev(s,\chi(t-s,x)) ds,
\end{equation*}
we obtain
\begin{equation*}
\begin{split}
\int_{\R} |v(t,x)| dx &\leq \int_{\R} | v_0(\chi(t,x)) | dx + \int_0^t \int_{\R} |(Ev)(s,\chi(t-s,x))| dx ds \\
&\leq C(\lambda,\Lambda) \| u_0 \|_{L^1(\R)} + \int_0^t C(\lambda,\Lambda) \| E \|_{L^1 \to L^1} \| v(s) \|_{L^1} ds.
\end{split}
\end{equation*}
By $\| E \|_{L^1 \to L^1} \leq C(\lambda,\Lambda,\| a_i \|_{\dot{C}^{0,1}})$, the proof is concluded by applying Gr\o nwall's lemma and inverting the change of basis.
\end{proof}

In the following we show that the dispersive estimate remains true on arbitrary time intervals, but with a bad constant.
\begin{lemma}
\label{lem:GrowthBound}
Let $\ell \in \{1,2\}$ and $A = \max_i \| a_i \|_{\dot{C}^{0,1}}$. Then, we find the following estimate to hold:
\begin{equation*}
\| P_1 e^{it L^{\frac{\ell}{2}}} \|_{L^1 \to L^\infty} \leq C_1 e^{C_2 T A} (1+|t|)^{-\sigma(\ell)} \quad (0 < |t| \leq T).
\end{equation*}
\end{lemma}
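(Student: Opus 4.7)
The strategy is to extend the short-time dispersive bound of Proposition \ref{prop:DispersiveEstimateUnitFrequencies} to all $|t| \leq T$, absorbing the iterative loss into the exponential factor $e^{C_2 T A}$. The plan mirrors that proof step by step, but replaces the absorption step that required $T A$ small by a Gronwall iteration driven by the $L^1 \to L^1$ growth bound of Lemma \ref{lem:LongDispersiveEstimate}.

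First I would upgrade the one-parameter integral estimate behind Proposition \ref{prop:DispersiveEstimateUnitFrequencies} to arbitrary $R>0$:
\begin{equation*}
\sup_{x \in \R} \int_{-R}^{R} |e^{it \tilde D_a} u_0(x)| \, dt \leq C e^{C R A} \|u_0\|_{L^1(\R)}.
\end{equation*}
The starting point is the same Duhamel identity $v(t,x) = T_t v_0(x) + \int_0^t T_{t-s}(Ev)(s,x)\, ds$ used in Proposition \ref{prop:DispersiveEstimateUnitFrequencies}; however, rather than absorbing the error term, I would use Fubini, the change of variable $u = \chi(t-s,x)$ (whose Jacobian is controlled by $\lambda, \Lambda$), and the $L^1$-boundedness of $E$ to reduce the double integral to $C \|E\|_{L^1 \to L^1} \int_0^R \|v(s)\|_{L^1}\, ds$. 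Lemma \ref{lem:LongDispersiveEstimate} then gives $\|v(s)\|_{L^1} \lesssim e^{CsA}\|v_0\|_{L^1}$, and since $\|E\|_{L^1 \to L^1} \lesssim A$, the double integral is $\lesssim e^{C R A} \|v_0\|_{L^1}$. Iterating this one-dimensional bound over the $d$ coordinate directions—exploiting the commutativity of the $D_{a_j}$ under the radial frequency constraint, exactly as in Proposition \ref{prop:DispersiveEstimateUnitFrequencies}—yields
\begin{equation*}
\sup_{x \in \R^d} \int_{|\xi| \leq R} |(e^{i\xi.D_a} u_0)(x)|\, d\xi \leq C e^{C R A} \|u_0\|_{L^1(\R^d)}. \quad (\star)
\end{equation*}

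With $(\star)$ in hand, I apply the Phillips representation $P_1 e^{itL^{\ell/2}} u_0(x) = \int K_t(\xi) (e^{i\xi.D_a} u_0)(x)\, d\xi$ and split the $\xi$-integration at $|\xi| = CT$. On the bulk region $\{|\xi| \leq CT\}$, where $K_t$ is essentially concentrated for $|t| \leq T$, the stationary-phase bound $|K_t(\xi)| \lesssim (1+|t|)^{-\sigma(\ell)}$ combined with $(\star)$ at $R = CT$ produces the target estimate $(1+|t|)^{-\sigma(\ell)} e^{C_2 TA}\|u_0\|_{L^1}$. On the tail $\{|\xi| > CT\}$ I would follow the non-stationary phase step from Proposition \ref{prop:DispersiveEstimateUnitFrequencies}: use the commutation identity $(1-\Delta_\xi) e^{i\xi.D_a} = (1+L) e^{i\xi.D_a}$ to integrate by parts in $\xi$ and transfer $N$ derivatives onto $K_t$, combine the resulting rapid decay $|(1-\Delta_\xi)^N K_t(\xi)| \lesssim_M (1+|\xi|)^{-M}$ with the $L^2$-boundedness of $e^{i\xi.D_a}$ (Proposition \ref{prop:L2Estimate}) and the $L^1 \to L^2$ mapping property of $(1+L)^{-N}$ (from Proposition \ref{prop:OperatorNorms} by interpolation with the trivial $L^1 \to L^1$ bound). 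This gives an $L^2$-bound of size $O_M(T^{-M})\|u_0\|_{L^1}$, which Bernstein's inequality transfers to $L^\infty$ because the output is frequency-localized by $P_1$; this tail is then harmlessly dominated by the bulk contribution.

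The main technical obstacle is establishing $(\star)$: the absorption argument used in Proposition \ref{prop:DispersiveEstimateUnitFrequencies} fails once $T A$ is not small, and must be replaced by a Gronwall step that directly invokes the exponential $L^1$-growth of Lemma \ref{lem:LongDispersiveEstimate} to control $\|v(s)\|_{L^1}$ uniformly in $s \in [0,R]$. Once $(\star)$ is in place, the bulk/tail split runs in parallel with the short-time case, and the exponential factor $e^{C_2 TA}$ propagates cleanly from the bulk region into the final bound.
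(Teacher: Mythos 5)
Your proof is correct, and the outer structure (bulk/tail split via Phillips calculus, iteration over coordinate directions using commutativity, stationary/non-stationary phase for $K_t$) tracks the paper's precisely. The genuine departure is in how you obtain the one-dimensional estimate
\begin{equation*}
\sup_{x \in \R} \int_{-R}^{R} |e^{it \tilde D_a} u_0(x)| \, dt \lesssim e^{CRA} \|u_0\|_{L^1(\R)}.
\end{equation*}
You estimate the Duhamel representation directly: after Fubini and the characteristic change of variables, the error term reduces to $C\|E\|_{L^1\to L^1}\int_0^R \|v(s)\|_{L^1}\,ds$, and rather than absorbing it (which requires $RA$ small), you plug in Lemma~\ref{lem:LongDispersiveEstimate} pointwise in $s$ and integrate the exponential, using $\|E\|_{L^1\to L^1}\lesssim A$. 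The paper instead re-uses Proposition~\ref{prop:DispersiveEstimateUnitFrequencies} as a black box: it partitions $[0,T]$ into $O(T/T')$ intervals of length $T'\sim 1/A$ on each of which the short-time estimate holds, pulls back the initial datum on each interval to $u_0$ via Lemma~\ref{lem:LongDispersiveEstimate}, and sums the geometric series. The two arguments encode the same Gr\o nwall propagation and land on the same constant $e^{C TA}$; the paper's version is more modular (it does not re-open the Duhamel identity), while yours is more self-contained at the cost of repeating the transport-group computation. One small caveat: in the tail region the paper passes from $L^\infty$ to $L^2$ and back using Sobolev embeddings for $(1+L)^{-N}$ (Proposition~\ref{prop:OperatorNorms}), not via Bernstein with $P_1$; your version needs $P_1$ to still be present after the $\xi$-integration, which is fine since the spectral cutoff commutes with the evolution, but it is worth stating explicitly that $\|P_1 g\|_{L^\infty}\lesssim \|P_1 g\|_{L^2}$ rests on $\psi(\sqrt{D_a^2})$ satisfying an $L^2\to L^\infty$ bound — in the paper's structure this is subsumed into the $(1+L)^{-N}$ argument.
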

\begin{proof}
As in the proof of Proposition \ref{prop:DispersiveEstimateUnitFrequencies} by the localization and decay of the half-wave or Schr\"odinger kernel at unit frequencies \eqref{eq:UnitFrequencyKernel}, it suffices to prove
\begin{equation*}
| \int_{|\xi| \leq T} e^{i \xi.D_a} u_0 | \leq C_1 e^{C_2 T A} \| u_0 \|_{L^1(\R^d)}.
\end{equation*}
As above, it suffices to prove the estimate for one Dirac operator by their commutativity and iteration.
Let $u(t,x) = e^{it \tilde{D}_a} u_0(x)$. We show
\begin{equation*}
\int_{-T}^T |u(t,x)| dt \leq C_1 e^{C_2 T A} \| u_0 \|_{L^1(\R)}.
\end{equation*}
Proposition \ref{prop:DispersiveEstimateUnitFrequencies} yields $T'(A,\lambda,\Lambda)$ such that
\begin{equation*}
\int_{0}^{T'} |u(t,x)| dt \lesssim \| u_0 \|_{L^1(\R)}.
\end{equation*}
We partition $[0,T]$ into $N=\lfloor T/T' \rfloor + 1$ intervals $I_k=[a_k,b_k]$ of length $T'$ such that $\int_{I'} |u(t,x)| dt \lesssim \| u(a_k) \|_{L^1}$. Hence, by Lemma \ref{lem:LongDispersiveEstimate} we find
\begin{equation*}
\int_{0}^T |u(t,x)| dt \leq \sum_{k=0}^{N+1} \int_{I_k} |u(t,x)| dt \lesssim \sum_{k=0}^{N+1} e^{kT' C} \| u_0 \|_{L^1} \lesssim e^{T C} \| u_0 \|_{L^1(\R)}.
\end{equation*}
\end{proof}

For the proof of the global results and extending the above to coefficients with bounded variation, we use the following result due to Beli--Ignat--Zuazua:
\begin{theorem}[{\cite[Theorem~1.1]{BeliIgnatZuazua2016}}]
\label{thm:GlobalDispersionWaveEquation}
For any $a \in BV(\R)$ satisfying \eqref{eq:Ellipticity} and $Var(\log(a)) < 2 \pi$ there exists a positive constant $C(Var(a),\lambda,\Lambda)$ such that the solution $u$ to
\begin{equation*}
\left\{ \begin{array}{cl}
\partial_{tt} u &= \partial_x (a(x) \partial_x) u, \quad (t,x) \in \R \times \R, \\
u(0) &= u_0 \in L^1(\R), \qquad \dot{u}(0) = 0
\end{array} \right.
\end{equation*}
satisfies
\begin{equation}
\label{eq:TransportPropertyWaveEquation}
\sup_{x \in \R} \int_{\R} |u(t,x)| dt \leq C(Var(a),\lambda,\Lambda) \| u_0 \|_{L^1(\R)}.
\end{equation}
\end{theorem}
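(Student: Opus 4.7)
The plan is to reduce the variable-coefficient one-dimensional wave equation to two coupled transport equations for the Riemann invariants $w^{\pm} = \partial_t u \pm \sqrt{a}\,\partial_x u$. In the constant-coefficient case these two fields decouple and are pure left/right movers, so that the bound $\sup_x \int |u(t,x)|\,dt \lesssim \|u_0\|_{L^1}$ is immediate from the change of variables along characteristics, i.e. from the identity $(e^{\pm t\sqrt{a}\partial_x} f)(x) = f(\chi^{\pm}(t,x))$ of Proposition \ref{prop:PropertiesTransport}. The only new phenomenon for variable $a$ is that the invariants $w^{\pm}$ couple to each other through a reflection coefficient built from $(\log a)'$, which is a finite signed measure under the assumption $a \in BV$ with $Var(\log a) < 2\pi$.

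First I would introduce the travel-time coordinate $\phi(x) = \int_0^x a(s)^{-1/2}\,ds$ and the two transport operators $\partial_t \pm \sqrt{a}\,\partial_x$, whose flows $\chi^{\pm}$ are $C^1$ diffeomorphisms by Proposition \ref{prop:PropertiesTransport}. A direct computation (interpreted distributionally, since $a \in BV$) gives the system
\begin{equation*}
(\partial_t \mp \sqrt{a}\,\partial_x) w^{\pm} = \pm \tfrac{1}{4} (\log a)'(x)\,\sqrt{a}\,(w^{+} - w^{-}),
\end{equation*}
with initial data $w^{\pm}(0,\cdot) = \pm \sqrt{a}\,\partial_x u_0$. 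Duhamel along the characteristics then expresses $w^{\pm}(t,x)$ as its free propagation plus an integral against the measure $(\log a)'$ of $w^{\mp}$ evaluated at $\chi^{\pm}$.

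Next I would iterate this identity to obtain a Neumann series $w^{\pm} = \sum_{n\geq 0} R_n (w^{+}_0, w^{-}_0)$ in which the $n$th term corresponds to $n$ reflections. The key estimate is that a single application of the reflection operator, acting on the norm $\|\cdot\|_{L^1_t L^\infty_x}$, is bounded by $\tfrac{1}{2\pi} Var(\log a)$; this follows from Fubini, the $C^1$-diffeomorphism property of $\chi^{\pm}$, and the fact that integration against $(\log a)'$ of a bounded function is controlled by $Var(\log a)$. The geometric series then converges precisely when $Var(\log a) < 2\pi$, giving a uniform bound on $\sup_x \|w^{\pm}(\cdot,x)\|_{L^1_t}$ in terms of $\|w^{\pm}_0\|_{L^1}$. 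Since $\partial_t u = \tfrac{1}{2}(w^{+} + w^{-})$, integrating in $t$ and using $\dot u(0) = 0$ together with the initial data of $w^{\pm}$ converts this back into the desired bound $\sup_x \int|u(t,x)|\,dt \lesssim \|u_0\|_{L^1}$, after a density argument from $C^\infty_c$ and a final change of basis to eliminate the derivative on $u_0$.

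The main obstacle is giving a rigorous meaning to the Riemann-invariant factorization when $a$ is only of bounded variation: the reflection term involves the measure $(\log a)'$, so the coupled system has to be interpreted in a distributional/mild sense, and one has to justify composing this measure with the characteristic flow (whose $C^1$-regularity from Proposition \ref{prop:PropertiesTransport} is crucial here). A secondary subtlety is tracking constants so that the threshold in the Neumann series is exactly $2\pi$ rather than some smaller quantity: this requires exploiting the fact that in each closed reflection cycle the wave picks up only half of the total variation of $\log a$, which is where the sharp numerical constant $2\pi$ from \cite{BeliIgnatZuazua2016} originates.
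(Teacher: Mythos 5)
The paper offers no proof of this statement at all: it is Beli--Ignat--Zuazua's Theorem 1.1, imported as a black box and used in Propositions \ref{prop:DispersivePropertyLocallyBoundedVariation} and \ref{prop:GlobalDispersiveEstimate}. So there is nothing in the paper to compare your argument against; you are (re)proving the cited external result. A few things about your sketch are worth flagging.

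First, the Riemann invariants $w^{\pm} = \partial_t u \pm \sqrt{a}\,\partial_x u$ are the right diagonalization of the wave operator, but they are derivatives of $u$. Under the hypotheses of the theorem the data is $u_0 \in L^1(\R)$ with $\dot u(0)=0$, so $w^{\pm}(0,\cdot) = \pm\sqrt{a}\,\partial_x u_0$ is only a distribution, not an $L^1$ function or even a measure. Your Neumann series is therefore being seeded with data in the wrong space, and the "final change of basis to eliminate the derivative on $u_0$" at the end is not a trivial step: passing from a bound on $\sup_x \int_{\R} |w^{\pm}(t,x)|\,dt$ to a bound on $\sup_x \int_{\R} |u(t,x)|\,dt$ involves integrating in time, which loses control. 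The natural thing to do for $\dot u(0)=0$ is to write $u = \tfrac12(u^+ + u^-)$ with $u^{\pm}$ themselves (not their derivatives) obeying coupled transport equations, keeping the whole argument at the level of $u_0 \in L^1$; Beli--Ignat--Zuazua essentially do exactly this via piecewise-constant approximation of $a$ with explicit reflection/transmission coefficients at interfaces.

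Second, and more seriously, the numerical threshold. Running the Neumann series you describe, one application of the reflection operator picks up, after the diffeomorphism change of variables along characteristics, a factor of $\tfrac14 \mathrm{Var}(\log a)$ (this comes straight out of the reflection term $\tfrac14(\log a)'\sqrt{a}(w^+ - w^-)$). Geometric summability would then need $\mathrm{Var}(\log a) < 4$, not $< 2\pi$. Since $4 < 2\pi$, the naive series proves a weaker theorem with a smaller admissible variation. Your closing remark that "the wave picks up only half of the total variation in each closed reflection cycle" is an appeal to exactly the cancellation structure you would need in order to reach $2\pi$, but as stated it is an assertion rather than an argument. This is not a minor bookkeeping issue: the sharpness of the $2\pi$ threshold is the main content of the cited theorem (the remark after the theorem emphasizes that Beli \emph{et al.} show $2\pi$ is optimal), and it does not follow from a crude absolute-value Neumann iteration. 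The actual proof tracks the monodromy of the system as the wave crosses jumps of $a$, and the $2\pi$ arises as a rotation threshold in that transfer-matrix picture, not as the radius of convergence of a termwise majorized series.

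So: the picture of the BV wave equation as transport plus reflection is the right one, and your series expansion is a reasonable first step, but as written it neither handles the low regularity of the data correctly nor reaches the sharp constant $2\pi$ that the statement claims.
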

\begin{remark}
We note that Beli \emph{et al.} showed the threshold for $Var(\log(a))$ to be sharp for \eqref{eq:TransportPropertyWaveEquation} to hold.
\end{remark}

We show the corresponding result to Proposition \ref{prop:DispersiveEstimateUnitFrequencies} for functions with locally bounded variation. To this end, let
\begin{equation*}
Var_T(a) = \sup \{ \int_{I} d|a'(x)| \, : \, I \subseteq \R \text{ interval}, \, |I| = T \}.
\end{equation*}
\begin{proposition}
\label{prop:DispersivePropertyLocallyBoundedVariation}
Let $a_i \in BV_{loc}(\R)$, $i=1,\ldots,d$ satisfy \eqref{eq:Ellipticity} such that there is $T>0$ with $Var_T(\log( a_i )) < 2 \pi$ and $a_{i}=1$ for $i=d+1,\ldots,2d$. Then, there is $\tilde{T} = \tilde{T}(T,\lambda,\Lambda)$ such that
\begin{equation*}
\| P_1 e^{it L^{\frac{\ell}{2}}} \|_{L^1 \to L^\infty} \lesssim (1+|t|)^{-\sigma(\ell)} \quad (0<|t| \leq \tilde{T}).
\end{equation*} 
If $Var(\log(a_i)) < 2 \pi$, then we can choose $\tilde{T} = \infty$.
\end{proposition}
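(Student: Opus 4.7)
The plan is to follow the three-step structure of Proposition~\ref{prop:DispersiveEstimateUnitFrequencies}, replacing the change-of-basis-plus-Gr\o nwall argument---which relies on the perturbation $E$ being $L^\infty$ and breaks down when $a_j'$ is only a measure---by a direct invocation of Theorem~\ref{thm:GlobalDispersionWaveEquation}. First, the Phillips functional calculus, the stationary-phase decay \eqref{eq:UnitFrequencyKernel} of the kernel $K_t$ at unit frequencies, and the resolvent bound \eqref{eq:SobolevEmbedding} reduce the dispersive estimate to the transport-type inequality
$$\sup_{x\in\R^d}\int_{|\xi|\leq\tilde T}|e^{i\xi.D_a}u_0(x)|\,d\xi \lesssim \|u_0\|_{L^1(\R^d)}.$$
Commutativity of the one-dimensional Dirac operators $D_{a_j}$ under the radial cutoff and the iterated Fubini step \eqref{eq:Iteration} further reduce this to the one-dimensional bound
$$\sup_{x\in\R}\int_{-\tilde T}^{\tilde T}|e^{it\tilde D_{a_j}}u_0(x)|\,dt \lesssim \|u_0\|_{L^1(\R;\C^2)}.$$

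Next I would connect $e^{it\tilde D_{a_j}}$ to the scalar wave equation. Since $a_{j+d}=1$ we have $\tilde D_{a_j}^2 = \mathrm{diag}(L_1,L_2)$ with $L_1 = -\partial_x(a_j\partial_x)$ and $L_2 = -a_j\partial_x^2$, and the representation recalled in Section~\ref{section:Preliminaries} yields, for $u(t) = e^{it\tilde D_{a_j}}u_0$ with $u_0=(f,g)^T$,
\begin{align*}
u_1(t) &= \cos(t\sqrt{L_1})f - i\,\tfrac{\sin(t\sqrt{L_1})}{\sqrt{L_1}}\partial_x g,\\
u_2(t) &= \cos(t\sqrt{L_2})g + i\,\tfrac{\sin(t\sqrt{L_2})}{\sqrt{L_2}}\,a_j\partial_x f.
\end{align*}
A direct computation gives the intertwinings $a_j\partial_x\cdot L_1 = L_2\cdot a_j\partial_x$ and $\partial_x\cdot L_2 = L_1\cdot\partial_x$, which extend through the Borel calculus and convert the off-diagonal contributions into spatial derivatives of sine-wave evolutions driven by $L_1$; the sine-wave pieces are then controlled either as time primitives of cosine-wave solutions (absorbing a factor of $\tilde T$, harmless in the local case) or through the companion bounds in~\cite{BeliIgnatZuazua2016}. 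The cosine-wave bound $\sup_x\int|\cos(t\sqrt{L_1})h(x)|\,dt\lesssim\|h\|_{L^1}$ is precisely Theorem~\ref{thm:GlobalDispersionWaveEquation}, and assembling the pieces yields the required one-dimensional transport estimate.

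Finally, the local-in-time statement follows by finite speed of propagation (Lemma~\ref{lem:FiniteSpeedPropagation}): one selects $\tilde T=\tilde T(T,\lambda,\Lambda)$ so small that the domain of influence of $e^{it\tilde D_{a_j}}u_0(x)$ on $[-\tilde T,\tilde T]$ fits inside a spatial interval of length $\le T$, on which by hypothesis $Var(\log a_j)<2\pi$; one then extends $a_j$ by a constant outside that interval (this does not increase the logarithmic variation) and invokes Theorem~\ref{thm:GlobalDispersionWaveEquation} applied to the modified coefficient. If $Var(\log a_j)<2\pi$ globally, no truncation is needed and $\tilde T=\infty$ is admissible. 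The delicate technical point---and the main obstacle---is that Theorem~\ref{thm:GlobalDispersionWaveEquation} is stated only for the cosine solution of the divergence-form wave equation, whereas the Dirac evolution produces both cosine and sine waves of $L_1$ together with a non-divergence form operator $L_2$; reconciling these via the two intertwinings above (which route everything back to $L_1$) and keeping careful track of the frequency localization in the pieces involving derivatives is the crucial part of the argument.
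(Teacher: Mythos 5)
Your reduction to a one-dimensional transport-type estimate via Phillips calculus, the stationary-phase decay of $K_t$, Proposition~\ref{prop:OperatorNorms}, commutativity under the radial cutoff, and iterated Fubini is the same as the paper's. But you then set out to prove the full group bound
\[
\sup_{x\in\R}\int_{-\tilde T}^{\tilde T}\bigl|e^{it\tilde D_{a_j}}u_0(x)\bigr|\,dt \lesssim \|u_0\|_{L^1(\R;\C^2)},
\]
and this is where you diverge and run into the difficulty you yourself flag. The paper never proves this in the $BV$ setting. It observes instead that $K_t$ is even (being the inverse Fourier transform of the even function $e^{it|y|^\ell}\psi(y)$), so that in the Phillips integral $\int K_t(\xi)\,e^{i\xi D_a}\,d\xi$ one may replace $e^{i\xi D_a}$ by its even part $\cos(\xi\sqrt{D_a^2})$. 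This kills all off-diagonal (sine) contributions at a stroke, and leaves precisely the two scalar cosine evolutions for $L_1=-\partial_x(a\partial_x)$ and $L_2=-a\partial_x^2$ on the diagonal. For $L_1$ Theorem~\ref{thm:GlobalDispersionWaveEquation} applies directly; for $L_2$ the paper notes that a change of variables brings it to divergence form as in~\cite{BurqPlanchon2006}.

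Your sine-handling has a genuine gap even after the intertwinings. Writing $\frac{\sin(t\sqrt{L_1})}{\sqrt{L_1}}\partial_x g=\int_0^t\cos(s\sqrt{L_1})\partial_x g\,ds$ only converts the problem into bounding $\cos(s\sqrt{L_1})\partial_x g$ in $L^1$ against $\|g\|_{L^1}$; the derivative now sits on $g\in L^1$ and the time-primitive trick does not remove it, so the $\tilde T$ factor is not the issue and the fix is not ``harmless in the local case.'' What is actually needed is that the vector-valued operator $\frac{\sin(t\sqrt{L_1})}{\sqrt{L_1}}\partial_x$ (the ``velocity'' component of the wave propagator) maps $L^1$ to $L^1_tL^\infty_x$-type bounds directly, which is a statement about its kernel, not about integrating a cosine wave in time; Theorem~\ref{thm:GlobalDispersionWaveEquation} says nothing about it. Appealing to unstated ``companion bounds in~\cite{BeliIgnatZuazua2016}'' does not close this. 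The symmetry observation makes the whole question moot, which is why it is the decisive step. Your final localization via finite speed of propagation (shrink $\tilde T$ so the domain of influence fits in a $T$-interval, extend the coefficient by a constant outside) matches the paper's argument, and your global case as well.
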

\begin{proof}
By the reductions from above, it is enough to prove
\begin{equation*}
\int_{-\tilde{T}}^{\tilde{T}} | e^{i \xi.D_a} u_0(x)| d\xi \leq C(Var_T(a), \lambda, \Lambda) \|u_0 \|_{L^1(\R)}.
\end{equation*}
$\tilde{T}$ will be determined from $T$ and $\lambda$, $\Lambda$. It suffices to show the above display in one dimension:
\begin{equation*}
\int_{- \tilde{T}}^{\tilde{T}} | e^{it.D_a} u_0(x)| dt \leq C \| u_0 \|_{L^1(\R)}.
\end{equation*}
By the symmetry of the kernel, we change to the cosine group, and it is enough to prove
\begin{equation*}
\int_{- \tilde{T}}^{\tilde{T}} | \cos(t L) u_0(x)| dt \leq C \| u_0 \|_{L^1(\R)}
\end{equation*}
for $L = \sqrt{- \partial_x (a_i(x) \partial_x)}$. Note that the second entry on the diagonal can be recast into divergence form after change of variables as pointed out in \cite{BurqPlanchon2006}. By finite speed of propagation, we can localize $u_0$ to an interval of length $l = C(\lambda,\Lambda) \tilde{T}$ such that $\cos(tL) u_0(x) = \cos(t L) \tilde{u}_0(x)$ for $|t| \leq \tilde{T}$. Secondly, we can localize the coefficients of $L$ such that
\begin{equation*}
\cos(t \tilde{L}) \tilde{u}_0(x) = \cos(t L) u_0(x).
\end{equation*}
Note that this strictly speaking only works for piecewise constant coefficients, but we can assume this by approximation arguments (cf. \cite{BeliIgnatZuazua2016}). Moreover, the localization of $a$ can be chosen in an interval of length $C(\lambda,\Lambda) \tilde{T}$ around $x$. By hypothesis, for $C(\lambda,\Lambda) \tilde{T} < T$, we have by the localization and Theorem \ref{thm:GlobalDispersionWaveEquation}
\begin{equation*}
\int_{-\tilde{T}}^{\tilde{T}} | \cos(t L) u_0(x)| dt \leq \int_{\R} | \cos(t \tilde{L}) u_0(x) | dt \leq C(Var_T(a), \lambda, \Lambda) \| u_0 \|_{L^1(\R)}.
\end{equation*}
\end{proof}

\begin{remark}
The hypothesis on locally bounded variation is satisfied for $a_i \in C^1(\R)$ with $\| \partial_x a_i \|_{L^1(\R)} < \infty$ or $a_i \in C^1(\R)$ being $\tau$-periodic.
\end{remark}

We begin the proof of Theorem \ref{thm:StrichartzEstimate} in earnest:
\begin{proof}[Proof of Theorem \ref{thm:StrichartzEstimate}]
We have the scaling symmetry
\begin{equation*}
x \to \| a_i \|_{\dot{C}^{0,1}} x, \qquad t \to \| a_i \|_{\dot{C}^{0,1}}^{\ell} t,
\end{equation*}
with $\ell =1$ for the half-wave equation and $\ell=2$ for the Schr\"odinger equation, which reduces the estimate to
\begin{equation*}
\| |D|^{-s_{\ell}} e^{it L^{\frac{\ell}{2}}} u_0 \|_{L^p([0,T],L^q)} \lesssim T^{\frac{1}{p}} \| u_0 \|_{L^2}
\end{equation*}
with $\| a \|_{\dot{C}^{0,1}} \leq 1$. For a sharp Strichartz pair $(s,p,q,d)$ we always have $0 \leq s <1$. Hence, for Lipschitz coefficients the homogeneous Sobolev spaces $\dot{W}^{s,p}$ and $\dot{W}^{s,p}_L$ are equivalent for $-1 < s < 1$, and it suffices to estimate
\begin{equation*}
\| |D_L|^{-s_\ell} e^{it L^{\frac{\ell}{2}}} u_0 \|_{L^p([0,T],L^q)} \lesssim T^{\frac{1}{p}} \| u_0 \|_{L^2}.
\end{equation*}

The estimate for the low frequencies
\begin{equation*}
\begin{split}
\| |D_L|^{-s_\ell} e^{it L^{\frac{\ell}{2}}} \phi(|D_L|) u_0 \|_{L^p([0,T],L^q(\R^d))} &\lesssim T^{\frac{1}{p}} \| |D_L|^{\frac{\ell}{p}} e^{it L^{\frac{1}{2}}} \phi(|D_L|) u_0 \|_{L^\infty([0,T],L^2(\R^d))} \\
&\lesssim T^{\frac{1}{p}} \| u_0 \|_{L^2(\R^d)}
\end{split}
\end{equation*}
follows by H\"older's and Bernstein's inequality and boundedness of the group on $L^2(\R^d)$.

We turn to the estimate for the high frequencies: The square function estimate \eqref{eq:SquareFunctionEstimate} and Minkowski's inequality, recall that $p,q \geq 2$, yield
\begin{equation}
\label{eq:FrequencyLocalization}
\begin{split}
\| |D_L|^{-s_\ell} e^{it L^{\frac{\ell}{2}}} u_0 \|_{L^p([0,T],L^q(\R^d))} &\sim \| \big( \sum_{k \geq 1} | |D_L|^{-s_\ell} \psi_k(|D_L|) e^{it L^{\frac{\ell}{2}}} u_0 |^2 \big)^{\frac{1}{2}} \|_{L^p([0,T],L^q(\R^d))} \\
&\lesssim \big( \sum_{k \geq 1} 2^{-2ks_\ell} \| \psi_k(|D_L|) e^{it L^{\frac{\ell}{2}}} u_0 \|^2_{L^p([0,T],L^q(\R^d))} \big)^{\frac{1}{2}}.
\end{split}
\end{equation}
Hence, it suffices to show the frequency localized estimate
\begin{equation}
\label{eq:FrequencyLocalizationStrichartzReduction}
\| \psi_k(|D_L|) e^{it L^{\frac{\ell}{2}}} u_0 \|_{L^p([0,T],L^q(\R^d))} \lesssim T^{\frac{1}{p}} 2^{k s_\ell} 2^{\frac{k (\ell -1)}{p}} \| \psi_k(|D_L|) u_0 \|_{L^2(\R^d)}
\end{equation}
We use the scaling $x' = 2^k x$, $t' = 2^{\ell k} t$, which yields $\partial_{x} = 2^{k} \partial_{x'}$ and $\frac{D_a}{2^k} = D_{\tilde{a}}$, $\tilde{a}_i(x) = a_i(2^{-k} x_i)$. Note that
\begin{equation}
\label{eq:RescaledDerivatives}
\| a_i(2^{-k} \cdot) \|_{\dot{C}^{0,1}} \lesssim 2^{-k} \| a_i \|_{\dot{C}^{0,1}} \lesssim 2^{-k}. 
\end{equation}
This reduces \eqref{eq:FrequencyLocalizationStrichartzReduction} to unit frequencies:
\begin{equation}
\label{eq:RescaledtoUnitFrequencies}
\| \psi(|D_{\tilde{L}}|) e^{it \tilde{L}^{\frac{\ell}{2}}} u_0 \|_{L^p([0,T 2^{\ell k}], L^q(\R^d))} \lesssim 
\big( T 2^{(\ell-1)k} \big)^{\frac{1}{p}} \| u_0 \|_{L^2(\R^d)}.
\end{equation}
We note that as a consequence of \eqref{eq:RescaledDerivatives} by Proposition \ref{prop:DispersiveEstimateUnitFrequencies} there is $C$ independent of $k \in \mathbb{N}_0$ such that the dispersive estimate
\begin{equation}
\label{eq:DispersiveEstimateLongTime}
\| \psi(|D_{\tilde{L}}|) e^{it \tilde{L}^{\frac{\ell}{2}}} u_0 \|_{L^\infty(\R^d)} \lesssim (1+|t|)^{-\sigma(\ell)} \| u_0 \|_{L^1(\R^d)}
\end{equation}
holds for $0 \leq |t| \leq C 2^k$ with implicit constant independent of $k$ and $t$. Thus, by the energy estimate from Proposition \ref{prop:L2Estimate}, the dispersive estimate \eqref{eq:DispersiveEstimateLongTime}, and Theorem \ref{thm:KeelTao}, we find
\begin{equation}
\label{eq:StrichartzEstimateLongTime}
\| \psi(|D_{\tilde{L}}|) e^{it \tilde{L}^{\frac{\ell}{2}}} u_0 \|_{L^p([0,C 2^k],L^q(\R^d))} \lesssim \| u_0 \|_{L^2(\R^d)}.
\end{equation}
Hence, by partitioning $[0,T2^{\ell k}]$ into $O(T 2^{(\ell-1) k})$ intervals $(I_m)$ of length $C2^{k}$, we conclude
\begin{equation*}
\begin{split}
&\quad \| \psi(|D_{\tilde{L}}|) e^{it \tilde{L}^{\frac{\ell}{2}}} u_0 \|_{L^p([0,T2^{\ell k}],L^q(\R^d))} \\
&= \big( \sum_m \| \psi(|D_{\tilde{L}}|) e^{it \tilde{L}^{\frac{\ell}{2}}} u_0 \|^p_{L^p(I_m,L^q(\R^d))} \big)^{\frac{1}{p}} \lesssim \big( \# I_m \big)^{\frac{1}{p}} \| u_0 \|_{L^2(\R^d)}.
\end{split}
\end{equation*}
We have additionally used that $e^{it \tilde{L}^{\frac{\ell}{2}}}$ conserves the $L^2$-norm independently of $k$.


Thus, by change of variables, we can conclude the Strichartz estimates for high frequencies of the original operators from estimates for unit frequencies of the rescaled operator:
\begin{equation*}
\begin{split}
\| e^{it L^{\frac{\ell}{2}}} \psi_k(|D_L|) u_0  \|_{L^p([0,T],L^q )} &\lesssim 2^{- \frac{\ell k}{p}} 2^{- \frac{dk}{q}} \| e^{it \tilde{L}^{\frac{\ell}{2}}} \psi(|D_{\tilde{L}}|) \tilde{u}_0 \|_{L^p([0,T 2^{\ell k}],L^q)} \\
&\lesssim (T 2^{(\ell -1)k})^{\frac{1}{p}} 2^{-\frac{\ell k}{p}} 2^{- \frac{dk}{q}} \| \psi(|D_{\tilde{L}}|) \tilde{u}_0 \|_{L^2} \\
&\lesssim (T 2^{(\ell -1)k})^{\frac{1}{p}} 2^{- \frac{\ell k}{p}} 2^{- \frac{dk}{q}} 2^{\frac{dk}{2}} \| \psi_k(|D_L|) u_0 \|_{L^2} \\
&= (T 2^{(\ell -1)k})^{\frac{1}{p}} 2^{ks_{\ell}} \| \psi_k(|D_L|) u_0 \|_{L^2}.
\end{split}
\end{equation*}
The proof is concluded by \eqref{eq:FrequencyLocalization} and square summing the above display.
\end{proof}
Next, we turn to the proof of global-in-time Strichartz estimates. The additional hypothesis on small variation allows for dispersive estimates with uniform constant for arbitrary times, from which global Strichartz estimates follow.
\begin{proposition}
\label{prop:GlobalDispersiveEstimate}
Let $a_i \in C^{0,1}(\R)$ satisfy \eqref{eq:Ellipticity} and suppose that $Var(\log(a_i )) < 2 \pi$ for $i=1,\ldots,d$ and $a_{i} =1$ for $i=d+1,\ldots,2d$. Then, we find the following estimates to hold:
\begin{align}
\label{eq:GlobalDispersiveEstimateWave}
\| e^{it L^{\frac{1}{2}}} \psi(|D_L|) \|_{L^1 \to L^\infty} &\lesssim (1+|t|)^{- \frac{d-1}{2}}, \\
\label{eq:GlobalDispersiveEstimateSEQ}
\| e^{itL} \|_{L^1 \to L^\infty} &\lesssim |t|^{-\frac{d}{2}} \quad (t \neq 0)
\end{align}
with implicit constant only depending on the ellipticity constants and $Var(\log(a_i a_{i+d}))$.
\end{proposition}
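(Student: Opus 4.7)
I would prove the two estimates separately. The wave bound \eqref{eq:GlobalDispersiveEstimateWave} is immediate from Proposition \ref{prop:DispersivePropertyLocallyBoundedVariation}: the global assumption $Var(\log a_i) < 2\pi$ is exactly the hypothesis that permits the choice $\tilde T = \infty$ in that proposition, so applying it with $\ell = 1$ directly yields \eqref{eq:GlobalDispersiveEstimateWave} with constant depending only on the ellipticity constants and $Var(\log a_i)$.

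For the Schr\"odinger bound \eqref{eq:GlobalDispersiveEstimateSEQ}, the decisive structural feature is that, since $a_{i+d} = 1$, the scalar operator decomposes as a sum of commuting one-dimensional pieces,
\[
L \;=\; \sum_{i=1}^d L_i, \qquad L_i \;=\; -\partial_{x_i}\bigl(a_i(x_i)\,\partial_{x_i}\bigr),
\]
with $L_i$ acting only in the $i$-th coordinate. Exactly as for the heat semigroup in the proof of Proposition \ref{prop:EquivalenceBesovNorms}, commutativity gives the tensor-product factorization
\[
e^{itL}(x,y) \;=\; \prod_{i=1}^d e^{itL_i}(x_i, y_i), \qquad t \neq 0.
\]
Since the $L^1 \to L^\infty$ norm of an integral operator equals the $L^\infty$ norm of its kernel, this reduces the claim to the one-dimensional dispersive bound
\[
\bigl\| e^{itL_i} \bigr\|_{L^1(\R) \to L^\infty(\R)} \;\lesssim\; |t|^{-1/2}, \qquad t \neq 0,
\]
which, multiplied over $i = 1,\dots,d$, produces exactly \eqref{eq:GlobalDispersiveEstimateSEQ}.

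For the one-dimensional dispersive estimate I would invoke the result of Burq--Planchon \cite{BurqPlanchon2006}, whose hypotheses (scalar divergence-form operator on $\R$ with $\lambda \le a \le \Lambda$, $a \in BV(\R)$, and $Var(\log a) < 2\pi$) are all satisfied in our setting, and whose constant depends only on $\lambda$, $\Lambda$, and $Var(\log a)$.

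The main obstacle, and the reason the two parts of the proposition must be treated differently, is that the Schr\"odinger bound cannot be obtained by Littlewood--Paley summation of the frequency-localized dispersive estimate of Proposition \ref{prop:DispersivePropertyLocallyBoundedVariation}: rescaling gives $\|P_\mu e^{itL_i}\|_{L^1 \to L^\infty} \lesssim \mu (1 + \mu^2 |t|)^{-1/2}$, whose dyadic sum diverges at high frequencies $\mu \ge |t|^{-1/2}$. The tensor-product factorization is therefore essential: it converts a $d$-dimensional Schr\"odinger dispersive estimate into $d$ independent one-dimensional problems, each of which must be handled by the sharper spectral analysis of Burq--Planchon rather than by frequency summation.
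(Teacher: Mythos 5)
Your argument is correct in outline but takes a genuinely different route from the paper for the Schr\"odinger estimate \eqref{eq:GlobalDispersiveEstimateSEQ}. The paper does not factorize the Schr\"odinger kernel; instead it writes
\begin{equation*}
e^{itL}u_0(x) = \int_{\R^d} G_t(\xi)\, e^{i\xi\cdot D_a} u_0(x)\, d\xi, \qquad G_t(\xi) = \frac{1}{(4\pi i t)^{d/2}}\, e^{it\xi^2},
\end{equation*}
by Phillips functional calculus without any frequency cutoff, pulls out $\sup_\xi |G_t(\xi)| = (4\pi|t|)^{-d/2}$, and then controls the remaining integral $\int_{\R^d} |e^{i\xi\cdot D_a}u_0(x)|\,d\xi$ by iterating the one-dimensional \emph{wave}-equation transport property of Theorem \ref{thm:GlobalDispersionWaveEquation} one coordinate at a time (using that $G_t$ is even in each coordinate to pass to the cosine group). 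The paper thus needs only the Beli--Ignat--Zuazua wave result and derives the Schr\"odinger decay as a corollary of the Gaussian multiplier structure --- in keeping with its guiding philosophy that Phillips calculus transfers constant-coefficient decay. Your tensor-product factorization of $e^{itL}$ into commuting one-coordinate pieces is also valid (and relies on the same structural hypothesis $a_{i+d}=1$), and the $L^1\to L^\infty$ norm indeed tensorizes, giving a shorter proof at the cost of invoking the one-dimensional Schr\"odinger $L^1\to L^\infty$ dispersive estimate as a black box. Both routes close the argument, and your remark that a dyadic Littlewood--Paley summation of the frequency-localized estimate diverges at high frequencies is correct --- it is exactly why a frequency-global argument is unavoidable for \eqref{eq:GlobalDispersiveEstimateSEQ}.

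One correction: the one-dimensional Schr\"odinger $L^1\to L^\infty$ dispersive bound under the hypothesis $Var(\log a)<2\pi$, together with the identification of the sharp $2\pi$-threshold, is due to Beli--Ignat--Zuazua \cite{BeliIgnatZuazua2016}, not to Burq--Planchon \cite{BurqPlanchon2006}; the latter obtain one-dimensional Strichartz estimates for $BV$ coefficients via local smoothing and do not isolate this threshold. You should cite \cite{BeliIgnatZuazua2016} for the one-dimensional dispersive estimate.
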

\begin{proof}
As in the proof of Proposition \ref{prop:DispersiveEstimateUnitFrequencies}, by the decay of the half-wave kernel at unit frequencies, to show \eqref{eq:GlobalDispersiveEstimateWave}, it suffices that
\begin{equation}
\label{eq:GlobalTransportProperty}
\int_{\R^d} \big| e^{i \xi.D_a} f(x) \big| d\xi \lesssim \int_{\R^d} |f(x)| dx.
\end{equation}
For the proof of \eqref{eq:GlobalDispersiveEstimateSEQ}, we write by Phillips functional calculus without frequency localization
\begin{equation*}
e^{it L} u_0(x) = \int_{\R^d} G_t(\xi) e^{i \xi.D_a} u_0(x) d\xi
\end{equation*}
with
\begin{equation*}
G_t(\xi) = \frac{1}{(2 \pi)^d} \int_{\R^d} e^{i y.\xi} e^{it |y|^2} dy = \frac{1}{(4 \pi i t)^{\frac{d}{2}}} e^{it \xi^2}.
\end{equation*}
Thus,
\begin{equation*}
|e^{it L} u_0(x) | \lesssim \sup_{\xi \in \R^d} |G_t(\xi) | \int_{\R^d} | e^{i \xi.D_a} u_0(x)| d\xi \lesssim |t|^{-\frac{d}{2}} \int_{\R^d} |e^{i \xi.D_a} u_0(x) | d\xi,
\end{equation*}
and \eqref{eq:GlobalDispersiveEstimateSEQ} follows likewise from \eqref{eq:GlobalTransportProperty}.

As in the proof of Proposition \ref{prop:DispersiveEstimateUnitFrequencies}, by commutativity of the generators and iteration, it suffices to show
\begin{equation}
\label{eq:OneDimensionalTransportProperty}
\int_{\R} \big| e^{it \tilde{D}_a} u(x) \big| dt \lesssim \int_{\R} |u(x)| dx
\end{equation}
for $u \in C^\infty_c(\R)$. We write $u(t,x) = e^{it \tilde{D}_a} u(x)$. Since we can change to the cosine group by radial frequency constraint, \eqref{eq:OneDimensionalTransportProperty} is immediate from Theorem \ref{thm:GlobalDispersionWaveEquation}.
\end{proof}
With global dispersive estimates at hand, we turn to the proof of Theorem \ref{thm:GlobalEstimates}:
\begin{proof}[Proof of Theorem \ref{thm:GlobalEstimates}]
We begin with the proof of \eqref{eq:GlobalSEQStrichartz}, which does not require an additional Littlewood-Paley decomposition. We have to show
\begin{equation*}
\| e^{itL} u_0 \|_{L^p(\R; L^q(\R^d))} \lesssim \| u_0 \|_{L^2(\R^d)}
\end{equation*}
for $\frac{2}{p} + \frac{d}{q} = \frac{d}{2}$, $(p,q,d) \neq (2,\infty,2)$. This is a consequence of Theorem \ref{thm:KeelTao} due to the global energy and dispersive estimates from Proposition \ref{prop:L2Estimate} and \ref{prop:GlobalDispersiveEstimate}.

For the proof of \eqref{eq:GlobalWaveStrichartz}, it suffices again to consider only sharp Strichartz pairs. We use the square function estimate and Minkowski's inequality to find
\begin{equation}
\label{eq:GlobalStrichartzFrequencyLocalization}
\begin{split}
\| |D|^{-s_1} e^{it L^{\frac{1}{2}}} u_0 \|_{L^p(\R;L^q(\R^d))} &\sim \| \big( \sum_{k \in \Z} | |D_L|^{-s_1} \psi_k(|D_L|) e^{it L^{\frac{1}{2}}} u_0 |^2 \big)^{\frac{1}{2}} \|_{L^p(\R;L^q(\R^d))} \\
&\lesssim \big( \sum_{k \in \Z} 2^{-2s_1k} \| \psi_k(|D_L|) e^{it L^{\frac{1}{2}}} u_0 \|^2_{L^p(\R;L^q(\R^d))} \big)^{\frac{1}{2}}.
\end{split}
\end{equation}
We rescale similarly as in the proof of Theorem \ref{thm:StrichartzEstimate} to reduce to unit frequencies. It suffices to prove that
\begin{equation}
\label{eq:GlobalUnitFrequencyStrichartzEstimate}
\| \psi(|D_{\tilde{L}}|) e^{it \tilde{L}^{\frac{1}{2}}} \tilde{u}_0 \|_{L^p(\R;L^q(\R^d))} \lesssim \| \tilde{u}_0 \|_{L^2(\R^d)}
\end{equation}
with implicit constant uniform in the rescalings. This is true as the rescaled coefficients are given by $a_{i,k}(x) = a_i(2^{-k} x)$ and hence, $\| \partial_x a_{i,k} \|_{L^1(\R)} = \| \partial_x a_i \|_{L^1(\R)}$. Thus, the global dispersive estimate derived in Proposition \ref{prop:GlobalDispersiveEstimate} holds uniformly in $k$, so does the energy estimate by Proposition \ref{prop:L2Estimate}, and hence \eqref{eq:GlobalUnitFrequencyStrichartzEstimate} holds true independently of $k \in \Z$. Plugging \eqref{eq:GlobalUnitFrequencyStrichartzEstimate} into \eqref{eq:GlobalStrichartzFrequencyLocalization} and square summing over the spectrally localized pieces finishes the proof.
\end{proof}

Next, we discuss inhomogeneous estimates. The involved arguments are standard by now, so we shall be brief. The most straight-forward estimate is recorded in Corollary \ref{cor:StrichartzEnergyEstimate}. By local well-posedness of the half-wave and Schr\"odinger equation in $L^2$, we can make use of Duhamel's formula:
\begin{equation*}
u = e^{it L^{\frac{\ell}{2}}} u_0 + \int_0^t e^{i(t-s)L^{\frac{\ell}{2}}} (P_{\ell} u)(s) ds
\end{equation*}
with $P_{\ell} = i \partial_t + L^{\frac{\ell}{2}}$. Corollary \ref{cor:StrichartzEnergyEstimate} now follows from Minkowski's inequality and homogeneous estimates. For details we refer to \cite[Corollary~2.10]{BurqGerardTzvetkov2004}.

For the proof of Corollary \ref{cor:InhomogeneousChristKiselev}, we use the following taylored version of the Christ--Kiselev lemma \cite{ChristKiselev2001}:
\begin{lemma}[{\cite[Lemma~8.1]{HassellTaoWunsch2006}}]
\label{lem:ChristKiselev}
Let $X$ and $Y$ be Banach spaces and for all $s,t \in \R$ let $K(s,t): X \to Y$ be an operator-valued kernel from $X$ to $Y$. Suppose we have the estimate
\begin{equation*}
\| \int_{\R} K(s,t) f(s) ds \|_{L^q(\R,Y)} \leq A \| f \|_{L^p(\R,X)}
\end{equation*}
for some $A > 0$ and $1 \leq p < q \leq \infty$, and $f \in L^p(\R;X)$. Then, we have

\begin{equation*}
\| \int_{s<t} K(s,t) f(s) ds \|_{L^q(\R,Y)} \leq C_{p,q} A \| f \|_{L^p(\R,X)}.
\end{equation*}
\end{lemma}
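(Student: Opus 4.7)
The plan is to prove the lemma by the classical Christ--Kiselev dyadic decomposition adapted to the $L^p$-distribution of $f$. The mechanism that makes the truncation harmless is precisely the strict inequality $p<q$, which in the end produces a convergent geometric series.

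After rescaling, I would assume $\|f\|_{L^p(\R;X)}=1$ and introduce the monotone distribution function $F(t)=\int_{-\infty}^t \|f(s)\|_X^p\, ds$, which maps $\R$ onto $[0,1]$. For each $n\geq 0$ and $0\leq k<2^n$ define the intervals
\[
J_{n,k}=F^{-1}\bigl([k 2^{-n},(k+1) 2^{-n})\bigr),
\]
so that $\|f\mathbf{1}_{J_{n,k}}\|_{L^p}^p\leq 2^{-n}$. The key combinatorial observation is that for every $t$ the interval $[0,F(t))\subseteq[0,1)$ admits a canonical greedy decomposition into dyadic subintervals using at most one interval per scale (read off the binary expansion of $F(t)$). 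Pulling this back through $F$, one obtains a disjoint decomposition
\[
(-\infty,t)=\bigsqcup_{n\geq 0} L_n(t),
\]
where each $L_n(t)$ is either empty or equals a single $J_{n,k_n(t)}$.

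Setting $E_{n,k}=\{t:\,L_n(t)=J_{n,k}\}$, which for each fixed $n$ yields pairwise disjoint sets in $t$, Minkowski's inequality gives
\[
\Bigl\|\int_{s<t} K(s,t) f(s)\, ds\Bigr\|_{L^q_t(Y)}\leq \sum_{n\geq 0}\Bigl\| \sum_k \mathbf{1}_{E_{n,k}}(t)\int_{J_{n,k}} K(s,t) f(s)\, ds\Bigr\|_{L^q_t(Y)}.
\]
Since the $E_{n,k}$ are disjoint in $t$ for fixed $n$, the inner norm splits as an $\ell^q$-sum, and the hypothesis applied to each piece $f\mathbf{1}_{J_{n,k}}$ bounds it by
\[
A\Bigl(\sum_k \|f\mathbf{1}_{J_{n,k}}\|_{L^p}^q\Bigr)^{1/q}\leq A\cdot 2^n \cdot 2^{-n/p}\cdot 2^{-n/q(-1)}\;=\;A\cdot 2^{n(1/q-1/p)},
\]
using $\|f\mathbf{1}_{J_{n,k}}\|_{L^p}^q\leq 2^{-nq/p}$ and that there are at most $2^n$ nonempty $k$'s.

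Summing over $n$ gives a convergent geometric series since $1/q-1/p<0$, which yields the stated constant $C_{p,q}=A\cdot(1-2^{1/q-1/p})^{-1}$. The only real delicacy is justifying the disjoint dyadic decomposition of $(-\infty,t)$ with at most one piece at each scale; this is the heart of the Christ--Kiselev trick and hinges on the monotonicity of $F$. Once that is in place, everything else is Minkowski and a geometric-series summation.
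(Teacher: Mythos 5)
The paper does not prove this lemma; it cites it as stated from Hassell--Tao--Wunsch, so there is no internal argument to compare against. Your proposal is a correct and complete rendering of the standard Christ--Kiselev argument: normalizing $\|f\|_{L^p}=1$, pulling back the dyadic partition of $[0,1)$ along the distribution function $F(t)=\int_{-\infty}^t\|f(s)\|_X^p\,ds$, decomposing $(-\infty,t)$ scale by scale via the binary expansion of $F(t)$ (with at most one dyadic interval per scale, since each partial sum of the binary digits is itself a dyadic rational of the appropriate generation), using that for fixed $n$ the sets $E_{n,k}$ are pairwise disjoint in $t$ to split the $L^q_t$-norm as an $\ell^q$-sum, applying the hypothesis to each $f\mathbf{1}_{J_{n,k}}$, and summing the resulting geometric series $\sum_n 2^{n(1/q-1/p)}$, which converges precisely because $p<q$. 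The argument also covers $q=\infty$: the $\ell^q$-sum degenerates to a supremum over disjoint pieces and the series becomes $\sum_n 2^{-n/p}$, still convergent since $p<\infty$. One cosmetic slip: the intermediate display ``$A\cdot 2^n \cdot 2^{-n/p}\cdot 2^{-n/q(-1)}$'' is garbled and should read $A\bigl(2^n\cdot 2^{-nq/p}\bigr)^{1/q}=A\,2^{n(1/q-1/p)}$, which is the bound you then correctly sum.
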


Theorem \ref{thm:GlobalInhomogeneousStrichartzEstimates} is another consequence of Theorem \ref{thm:KeelTao} with the dispersive estimate at hand. Additionally, a Littlewood-Paley decomposition is required. 

\section{Applications}
\label{section:Applications}
In this section we give applications of the preceding analysis. First, we show Strichartz estimates for coefficients of bounded variation by limiting arguments. Next, we prove wave Strichartz estimates for H\"older coefficients under the additional structural assumptions. In this case, they improve the general estimates due to Tataru \cite{Tataru2001}. Finally, we indicate applications to the well-posedness theory of nonlinear Schr\"odinger equations.

\subsection{Strichartz estimates for BV coefficients}
\label{subsection:BVCoefficients}
In the following we see that by approximation arguments due to Burq--Planchon \cite[Proposition~1.3]{BurqPlanchon2006}, we can drop the hypothesis that the coefficients are Lipschitz and allow for $BV_{loc}(T)$-coefficients, which norm is defined by
\begin{equation*}
\| a \|_{BV_{loc}(T)} = \sup \{ \int_I d|a'|(y) \, : \, I \subseteq \R \text{ interval, }  |I| = T \}
\end{equation*}
for $a'$ being a locally finite measure. The backbone is the dispersive estimate only depending on the ellipticity constants and $\| a \|_{BV_{loc}(T)}$.
We have the following:
\begin{theorem}
\label{thm:BVVariant}
Let $(a_i)_{i=1,\ldots,d} \subseteq L^\infty$ satisfy \eqref{eq:Ellipticity} and let $T>0$ such that $\| \log(a_i) \|_{BV_{loc}(T)} < 2 \pi$ and $L = - \sum_{i=1}^d \partial_{x_i} ( a_i(x_i) \partial_{x_i})$. Then, we find the following estimates to hold:
\begin{equation*}
\|  e^{it L^{\frac{\ell}{2}}} u_0 \|_{L^p([0,T], \dot{B}^{-s_\ell,q}_2(\R^d) )} \lesssim \| u_0 \|_{H^{\frac{\ell-1}{p}}(\R^d)}
\end{equation*}
for $\ell \in \{1,2\}$, $s_{\ell} = d \big( \frac{1}{2} - \frac{1}{q} \big) - \frac{\ell}{p}$, $(p,q,d)$ being wave $(\ell =1)$ or Schr\"odinger $(\ell = 2)$ admissible.
\end{theorem}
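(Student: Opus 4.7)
The plan is to reduce to the Lipschitz case by approximation, exploiting the fact that Proposition~\ref{prop:DispersivePropertyLocallyBoundedVariation} gives a dispersive estimate whose constants depend only on the ellipticity bounds and on $\|\log(a_i)\|_{BV_{loc}(T)}$, not on any smoothness norm of the coefficients. Approximation is precisely the method indicated by the reference to Burq--Planchon at the start of Subsection~\ref{subsection:BVCoefficients}.

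First, I would mollify: set $a_i^\epsilon = a_i * \rho_\epsilon$ for a standard mollifier $\rho_\epsilon$. Then $a_i^\epsilon \in C^\infty(\R)$ is Lipschitz, the ellipticity bounds are preserved, and by sub-additivity of total variation under convolution $\|\log(a_i^\epsilon)\|_{BV_{loc}(T)} \leq \|\log(a_i)\|_{BV_{loc}(T)} < 2\pi$. Write $L^\epsilon$ for the corresponding operator and $u^\epsilon = e^{it(L^\epsilon)^{\ell/2}} u_0$.

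Next, I would run the argument of Theorem~\ref{thm:StrichartzEstimate} for each $u^\epsilon$, carefully tracking constants. The square function estimate (Proposition~\ref{prop:SquareFunctionEstimate}) reduces to dyadic frequency-localized pieces, and the rescaling $x \mapsto 2^k x$ reduces each piece to unit frequencies of the operator with coefficients $a_i(2^{-k}\cdot)$. A substitution shows
\begin{equation*}
\|\log(a_i(2^{-k}\cdot))\|_{BV_{loc}(2^k T)} = \|\log(a_i)\|_{BV_{loc}(T)} < 2\pi,
\end{equation*}
so Proposition~\ref{prop:DispersivePropertyLocallyBoundedVariation} supplies a dispersive estimate for the rescaled operator on a time interval of length $\gtrsim 2^k$, with a constant that is uniform in $\epsilon$ and in $k$. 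Combining with the energy estimate (Proposition~\ref{prop:L2Estimate}), applying Keel--Tao (Theorem~\ref{thm:KeelTao}), and partitioning $[0, T 2^{\ell k}]$ into $O(T 2^{(\ell-1) k})$ subintervals of length $\sim 2^k$ yield the frequency-localized Strichartz bound. Undoing the rescaling and using Proposition~\ref{prop:EquivalenceBesovNorms} to pass from $\dot{B}^{-s,q}_{2,L^\epsilon}$ to the standard $\dot{B}^{-s,q}_2$ (the heat kernel bounds in its proof depend only on ellipticity, hence are uniform in $\epsilon$) assembles the dyadic pieces into the full estimate for $u^\epsilon$ with an $\epsilon$-independent constant. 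Using Besov norms on the left-hand side accommodates the $q=\infty$ cases as in the remark after Theorem~\ref{thm:StrichartzEstimate}.

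Finally, let $\epsilon \to 0$. For Schwartz data $u_0$, form convergence of the bilinear forms $\int a_i^\epsilon |\partial_i v|^2 \to \int a_i |\partial_i v|^2$ on $H^1(\R^d)$ yields strong resolvent convergence $L^\epsilon \to L$, and hence strong convergence $u^\epsilon(t) \to u(t)$ in $L^2_{\mathrm{loc}}$ for each $t$. The main obstacle is passing this mode of convergence through the Besov-valued $L^p_t$ norm on the left-hand side; I would handle it by Fatou piece by piece, realizing the Besov norm as an $\ell^2$ sum of $L^q_x$ norms of dyadic frequency projections, each of which is lower semicontinuous under $L^2_{\mathrm{loc}}$ convergence since the projections themselves converge in operator norm on compact sets (by uniformity of the heat kernel bounds). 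Density of Schwartz functions in $H^{(\ell-1)/p}$ together with the uniform $\epsilon$-independent bound then concludes the proof.
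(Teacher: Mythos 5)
Your proposal follows essentially the same route as the paper: mollify the coefficients, run the Keel--Tao machinery using the $BV_{loc}$-dependent (rather than Lipschitz-dependent) dispersive estimate from Proposition~\ref{prop:DispersivePropertyLocallyBoundedVariation} together with the Besov-norm equivalence of Proposition~\ref{prop:EquivalenceBesovNorms} to get $\varepsilon$-uniform bounds, and pass to the limit via strong resolvent convergence. The paper packages the limiting step as an abstract approximation principle (strong resolvent convergence plus weak compactness of the unit ball of the target Besov space, following Burq--Planchon), which is tidier than your pointwise Fatou argument but amounts to the same thing; the one small claim worth tightening is ``$Var(\log(a_i^\varepsilon)) \le Var(\log a_i)$,'' which is true for positive mollifiers but is not literally sub-additivity of total variation under convolution since $\log(a*\rho)\neq(\log a)*\rho$ — mollifying $\log a_i$ instead of $a_i$ avoids the issue entirely.
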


 A straight-forward modification of the proof of \cite[Proposition~1.3]{BurqPlanchon2006} yields the following:
\begin{proposition}
Let $(a_i)^{d}_{i=1} \subseteq C^{0,1}(\R;\R)$ satisfy \eqref{eq:Ellipticity} and $\| a_i \|_{BV_{loc}(T)} < \infty$.\\
Denote $L = - \sum_{i=1}^d \partial_{x_i} (a(x_i) \partial_{x_i})$. Suppose that the $C_0$-group $S_a(t)$ generated by $iL$ satisfies
\begin{equation*}
\| S_a(t) u_0 \|_B \leq C(\lambda,\Lambda, \| a_i \|_{BV_{loc}(T)}) \| u_0 \|_{L^2(\R^d)}
\end{equation*}
with $B$ a Banach space (weakly) continuously embedded in $\mathcal{D}'(\R^{d+1})$, whose unit ball is weakly compact. Then the same result holds for $(a_i)_{i=1}^d \subseteq L^\infty(\R)$ satisfying \eqref{eq:Ellipticity} and $ \| a_i \|_{BV_{loc}(T)} < \infty $.
\end{proposition}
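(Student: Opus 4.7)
The plan is a compactness argument in the spirit of Burq--Planchon \cite{BurqPlanchon2006}: regularize the $L^\infty$ coefficients to Lipschitz ones, apply the hypothesized estimate uniformly in the regularization parameter, and then use the weak compactness of the unit ball of $B$ to pass to the limit. What makes this work is that the constant in the hypothesized bound depends only on ellipticity and the local BV norm, both of which are preserved under a careful mollification.

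First I would set, for each $\epsilon > 0$, $a_i^\epsilon = \rho_\epsilon * a_i$ with $\rho_\epsilon$ a smooth non-negative mollifier of unit mass. Convolution with a probability measure preserves the pointwise inequality $\lambda \leq a_i^\epsilon \leq \Lambda$ and is non-increasing on total variation, hence $\|a_i^\epsilon\|_{BV_{loc}(T)} \leq \|a_i\|_{BV_{loc}(T)}$. Each $a_i^\epsilon$ lies in $C^\infty \subset C^{0,1}(\R)$, so the hypothesis applies to $L^\epsilon = -\sum_i \partial_{x_i}(a_i^\epsilon \partial_{x_i})$ and yields
\[
\|S_{a^\epsilon}(t) u_0\|_B \leq C(\lambda,\Lambda,\|a_i\|_{BV_{loc}(T)}) \|u_0\|_{L^2(\R^d)}
\]
uniformly in $\epsilon$.

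Next I would identify the limit. For $L^\infty$ coefficients under \eqref{eq:Ellipticity}, $L$ is defined as the self-adjoint operator associated with the closed symmetric form $\mathcal{E}(u,v) = \sum_i \int a_i \, \partial_i u \, \overline{\partial_i v}$ on $H^1(\R^d)$, and $S_a(t) = e^{itL}$ is the resulting unitary group. Since $a_i^\epsilon \to a_i$ pointwise a.e.\ with uniform ellipticity, dominated convergence gives $\mathcal{E}^\epsilon(u,v) \to \mathcal{E}(u,v)$ for all $u,v \in H^1(\R^d)$. As the form domain does not depend on $\epsilon$, this upgrades to Mosco convergence, and the Trotter--Kato theorem yields strong resolvent convergence $L^\epsilon \to L$ on $L^2(\R^d)$. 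Consequently $S_{a^\epsilon}(t) u_0 \to S_a(t) u_0$ strongly in $L^2$, locally uniformly in $t$, and in particular in $\mathcal{D}'(\R^{d+1})$.

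Finally, the family $\{S_{a^\epsilon}(\cdot) u_0\}_{\epsilon>0}$ lies in a fixed multiple of the unit ball of $B$, so weak compactness yields a subsequence $S_{a^{\epsilon_k}}(\cdot) u_0 \rightharpoonup v$ in $B$. The continuous embedding $B \hookrightarrow \mathcal{D}'(\R^{d+1})$ forces $v = S_a(\cdot) u_0$, and lower semicontinuity of the $B$-norm under weak convergence delivers the claimed bound. I expect the main technical obstacle to be the form-convergence step: one must verify that the $L^\infty$ operator $L$ is indeed the correct limit object and that the mode of convergence is strong enough so that the weak $B$-limit genuinely coincides with $S_a(\cdot) u_0$ in the sense of distributions.
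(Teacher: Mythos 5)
Your proposal is correct and follows essentially the same approach the paper has in mind (and spells out explicitly for the wave analogue, Proposition \ref{prop:LocalWaveStrichartzBV}): mollify the coefficients, invoke the hypothesized bound uniformly because mollification preserves ellipticity and the local $BV$ norm, pass to a weak limit in $B$, and identify it via strong resolvent convergence of $L^\varepsilon$ to $L$. The only cosmetic difference is that you reach strong resolvent convergence through form/Mosco convergence and Trotter--Kato, whereas the paper's wave argument uses the second resolvent identity together with \cite[Theorem~VIII.21]{ReedSimon1980}; both are standard routes to the same conclusion.
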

This yields the Schr\"odinger Strichartz estimates of Theorem \ref{thm:BVVariant}.
We turn to Strichartz estimates for wave equations locally-in-time with coefficients $\| a_i \|_{BV_{loc}(T)} < \infty$:
\begin{proposition}
\label{prop:LocalWaveStrichartzBV}
Let $d \geq 2$ and $(a_i)_{i=1}^d \subseteq C^{0,1}(\R;\R)$ satisfy \eqref{eq:Ellipticity}. Denote $L = - \sum_{i=1}^d \partial_{x_i} (a_i(x_i) \partial_{x_i})$. Suppose that the $C_0$-group $S_a(t)$ generated by
\begin{equation*}
\begin{pmatrix}
0 & 1 \\ 
L & 0
\end{pmatrix}
\text{ on } H^1 \times L^2
\end{equation*}
satisfies
\begin{equation*}
\| (S_a(t)(u_0, v_0))_1 \|_B \leq C(\lambda,\Lambda,\| a_i \|_{BV_{loc}(T)}) \| (u_0, \partial_t u_0) \|_{H^1 \times L^2} 
\end{equation*}
with $B$ a Banach space (weakly) continuously embedded in $\mathcal{D}'([0,T] \times \R^d)$, whose unit ball is weakly compact. Then the same result holds for $(a_i)_{i=1}^d \subseteq BV_{loc}(T)$ satisfying \eqref{eq:Ellipticity}.
\end{proposition}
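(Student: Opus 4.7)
The plan is to run an approximation argument exactly parallel to the one that Burq--Planchon use for the Schr\"odinger equation in \cite[Proposition~1.3]{BurqPlanchon2006}, replacing their propagator by the wave propagator. Given $(a_i)_{i=1}^d \subseteq L^\infty(\R)$ satisfying \eqref{eq:Ellipticity} with $\| a_i \|_{BV_{loc}(T)} < \infty$, I would introduce the mollifications $a_i^\varepsilon = \rho_\varepsilon * a_i$ with $\rho_\varepsilon$ a standard one-dimensional mollifier. These coefficients are smooth (hence in $C^{0,1}$), satisfy \eqref{eq:Ellipticity} with the same constants, and the $BV_{loc}(T)$-norm is non-increasing under convolution with a positive kernel of unit mass, so $\| a_i^\varepsilon \|_{BV_{loc}(T)} \leq \| a_i \|_{BV_{loc}(T)}$. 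Writing $L^\varepsilon = -\sum_i \partial_{x_i}(a_i^\varepsilon(x_i) \partial_{x_i})$ and $S_{a^\varepsilon}(t)$ for the associated wave propagator on $H^1 \times L^2$, the hypothesis of the proposition yields
\begin{equation*}
\| (S_{a^\varepsilon}(t)(u_0,v_0))_1 \|_B \leq C(\lambda,\Lambda, \| a_i \|_{BV_{loc}(T)}) \| (u_0, v_0) \|_{H^1 \times L^2}
\end{equation*}
with a constant that does not depend on $\varepsilon$.

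The second step is to extract a weak limit. Since the unit ball of $B$ is weakly compact, there is a subsequence $\varepsilon_n \to 0$ along which $(S_{a^{\varepsilon_n}}(t)(u_0,v_0))_1$ converges weakly in $B$ to some element $w$, and by weak lower semicontinuity the limit satisfies the same bound. It remains to identify $w$ with $(S_a(t)(u_0,v_0))_1$, where $L$ is the self-adjoint operator associated by form methods to the quadratic form $q(u,v) = \sum_i \int a_i(x_i) \partial_i u \, \overline{\partial_i v}\, dx$ on $H^1(\R^d)$ (this is well-defined since $a_i \in L^\infty$ and elliptic). By the standard $L^2$-energy identity for the wave group, $U^{\varepsilon_n} := S_{a^{\varepsilon_n}}(t)(u_0,v_0)$ is uniformly bounded in $L^\infty([0,T]; H^1 \times L^2)$, and the evolution equation gives a uniform bound on $\partial_t U^{\varepsilon_n}$ in $L^\infty([0,T]; L^2 \times H^{-1})$.

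For the identification, the key is to pass to the limit in the distributional formulation $\partial_t^2 u^{\varepsilon_n} = -L^{\varepsilon_n} u^{\varepsilon_n}$, i.e., in $\int a_i^{\varepsilon_n} \partial_i u^{\varepsilon_n}\, \partial_i \phi \, dx\, dt$ for test functions $\phi$. An Aubin--Lions argument gives strong convergence of $u^{\varepsilon_n}$ in $L^2_{loc}([0,T] \times \R^d)$, and combined with $a_i^{\varepsilon_n} \to a_i$ in $L^p_{loc}$ for $p<\infty$ and the weak convergence $\partial_i u^{\varepsilon_n} \rightharpoonup \partial_i u$ in $L^\infty([0,T]; L^2)$, one can pass to the limit in the bilinear form. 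The limit $u$ is then the unique solution of $\partial_t^2 u = -L u$ with data $(u_0,v_0)$ in the energy space (uniqueness follows from the $L^2$-energy identity for the limiting equation), so $w = u = (S_a(t)(u_0,v_0))_1$, and the desired bound is inherited.

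The main obstacle is the weak-in-$L^\infty L^2$ times strong-in-$L^p_{loc}$ argument needed to identify the limit of $a_i^{\varepsilon_n}\partial_i u^{\varepsilon_n}$: Lipschitz norms of $a_i^{\varepsilon_n}$ blow up as $\varepsilon_n\to 0$, so one must avoid any estimate that depends on $\| a_i \|_{\dot{C}^{0,1}}$ and rely exclusively on the $L^\infty$ and $BV_{loc}(T)$ control, exactly as in Burq--Planchon. Once this compactness-plus-uniqueness step is executed, weak lower semicontinuity of the $B$-norm along the subsequence and the fact that the full approximating family converges (by uniqueness of the limit) give the estimate for the original $BV_{loc}(T)$ coefficients.
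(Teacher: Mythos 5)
Your proposal is correct in substance, but it takes a genuinely different route from the paper. Both proofs mollify the coefficients, observe that $\|a_i^\varepsilon\|_{BV_{loc}(T)}\leq\|a_i\|_{BV_{loc}(T)}$, and note that the hypothesis gives a uniform bound in $B$; the difference is how the limit $(S_{a_\varepsilon}(t)(u_0,v_0))_1$ is identified with $(S_a(t)(u_0,v_0))_1$. The paper argues functional-analytically: it reduces the question to strong convergence $S_{a_\varepsilon}(t)\to S_a(t)$ on $H^1\times L^2$, which by the Trotter--Kato theorem (\cite[Theorem~VIII.21]{ReedSimon1980}) follows from strong convergence of the resolvents
\begin{equation*}
\mathcal{L}_\varepsilon^{-1}=
\begin{pmatrix}
i & 1\\
L_\varepsilon & i
\end{pmatrix}^{-1}
\;\longrightarrow\;
\mathcal{L}^{-1}=
\begin{pmatrix}
i & 1\\
L & i
\end{pmatrix}^{-1},
\end{equation*}
established via the second resolvent identity $\mathcal{L}^{-1}-\mathcal{L}_\varepsilon^{-1}=\mathcal{L}_\varepsilon^{-1}(\mathcal{L}_\varepsilon-\mathcal{L})\mathcal{L}^{-1}$. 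You instead argue PDE-style: uniform $H^1\times L^2$ energy bounds, Aubin--Lions compactness, and passage to the limit in the weak formulation $\int a_i^{\varepsilon}\partial_i u^{\varepsilon}\,\partial_i\phi$, followed by a uniqueness argument for the limiting energy solution. Both are legitimate; the resolvent route is cleaner (there is no need for Aubin--Lions or a separate uniqueness step, since the semigroup itself converges strongly), whereas your argument is more elementary and closer to what one would do for a nonlinear problem. One small remark: the Aubin--Lions step for $u^{\varepsilon_n}$ is not actually needed for the identification of the limit in the bilinear form --- it suffices that $a_i^{\varepsilon_n}\partial_i\phi\to a_i\partial_i\phi$ strongly in $L^2$ on compact sets (uniform $L^\infty$ bound plus a.e.\ convergence), paired against the weak $L^2$ limit of $\partial_i u^{\varepsilon_n}$. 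So the ``main obstacle'' you flag is in fact not an obstacle at all, and the argument closes without compactness for $u^{\varepsilon_n}$ itself.
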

\begin{proof}
Let $(a_i)_{i=1}^d \subseteq BV_{loc}(T)$ satisfy \eqref{eq:Ellipticity}. Let $(\rho_\varepsilon)_{\varepsilon > 0}$ be a family of positive mollifiers and consider $a_i^\varepsilon = \rho_\varepsilon * a_i$. We have $(a_i^\varepsilon) \subseteq  C^\infty$ and $\| \partial_x (a_i^\varepsilon) \|_{L^1_{loc}(T)} \leq \| a_i \|_{BV_{loc}(T)}$. Hence, by assumption
\begin{equation*}
\| (S_{a_\varepsilon}(t)(u_0, v_0))_1 \|_{B} \leq C(\lambda,\Lambda,\| a_i \|_{BV_{loc}(T)}) \| (u_0,v_0) \|_{H^1 \times L^2}
\end{equation*}
and $(S_{a_\varepsilon}(t)(u_0,v_0))_1$ converges weakly in $\mathcal{D}'([0,T] \times \R^d)$. As in \cite{BurqPlanchon2006}, it suffices to prove that $S_{a_\varepsilon}(t) (u_0,v_0) \to S_a(t) (u_0, v_0)$ in $H^1 \times L^2$ because by energy estimates this yields convergence to $S_a(\cdot) (u_0,v_0)$ in $\mathcal{D}'([0,T] \times \R^d)$. For this purpose it is enough to show the strong convergence of 
\begin{equation*}
\mathcal{L}_\varepsilon^{-1} = \begin{pmatrix}
i & 1 \\
L_\varepsilon & i
\end{pmatrix}
^{-1} \text{ to } \mathcal{L}^{-1} = 
\begin{pmatrix}
i & 1 \\
L & i
\end{pmatrix}^{-1}
\end{equation*}
 by \cite[Theorem~VIII.21]{ReedSimon1980}. Clearly, $\mathcal{L}_\varepsilon^{-1}:L^2 \times H^{-1} \to H^1 \times L^2$ uniformly in $\varepsilon$ as well as $\mathcal{L}^{-1}: L^2 \times H^{-1} \to H^1 \times L^2$. By the resolvent formula,
 \begin{equation*}
 \mathcal{L}^{-1} - \mathcal{L}_\varepsilon^{-1} = \mathcal{L}_\varepsilon^{-1} ( \mathcal{L}_\varepsilon - \mathcal{L}) \mathcal{L}^{-1}.
 \end{equation*}
 Hence, $\mathcal{L}_\varepsilon^{-1}$ converges strongly to $\mathcal{L}^{-1}$ as operators from $L^2 \times H^{-1}$ to $H^1 \times L^2$. Hence, $\mathcal{L}_\varepsilon^{-1}$ converges to $\mathcal{L}^{-1}$ strongly as operators from $H^1 \times L^2$ to $H^1 \times L^2$. The proof is complete.
\end{proof}
Now we can prove the Strichartz estimates for half-wave equations stated in Theorem \ref{thm:BVVariant}.
\begin{proof}[Proof of Theorem \ref{thm:BVVariant}, Half-wave Strichartz estimates]
The half-wave Strichartz estimates for Lipschitz coefficients read
\begin{equation*}
\| e^{it L^{\frac{1}{2}}} u_0 \|_{L_T^p \dot{B}^{-s_1,q}_2} \lesssim \| u_0 \|_{L^2(\R^d)}.
\end{equation*}
These yield wave Strichartz estimates for $C^{0,1}$-coefficients:
\begin{equation*}
\| |D|^{1-s} (S_a(t) (u_0,v_0))_1 \|_{L^p_T L^q} \lesssim \| u_0 \|_{\dot{H}^1} + \| v_0 \|_{L^2}.
\end{equation*}
We have $(S_a(t) (u_0,v_0))_1 = \cos(tL) u_0 + \frac{\sin(tL) v_0}{L^{\frac{1}{2}}}$. Since it is enough to consider sharp Strichartz pairs with $q>2$, we have $0 < 1-s <1$ and have equivalence of Besov norms by Proposition \ref{prop:EquivalenceBesovNorms}. We estimate
\begin{equation*}
\begin{split}
\| (S_a(t) (u_0,v_0))_1 \|_{L^p_T \dot{B}^{1-s,q}_2} &\lesssim \| \cos(tL) u_0 \|_{L_T^p \dot{B}^{1-s,q}_{2,L}} + \| \sin(tL) v_0 \|_{L_T^p \dot{B}^{-s,q}_{2,L}} \\
&\lesssim \| |D_L| u_0 \|_{L^2} + \| v_0 \|_{L^2} \lesssim \| u_0 \|_{\dot{H}^1} + \| v_0 \|_{L^2}.
\end{split}
\end{equation*}
At this point, we invoke Proposition \ref{prop:LocalWaveStrichartzBV}, which yields estimates
\begin{equation*}
\| (S_a(t)(u_0,v_0))_1 \|_{L_T^p \dot{B}^{1-s,q}_2} \lesssim \| u_0 \|_{\dot{H}^1} + \| v_0 \|_{L^2}
\end{equation*}
for $BV_{loc}(T)$-coefficients. We apply this estimate to $v_0 = i L^{\frac{1}{2}} u_0$, which gives
\begin{equation*}
\| e^{it L^{\frac{1}{2}}} u_0 \|_{L_T^p \dot{B}^{1-s,q}_{2,L}} \lesssim \| u_0 \|_{\dot{H}^1}.
\end{equation*}
The half-wave Strichartz estimate now follows from trading derivatives and the substitution $v_0 = |D_L| u_0$.
\end{proof}
As an example for new local-in-time Strichartz estimates for Schr\"odinger equations with $BV_{loc}$-coefficients, we consider the Kronig--Penney model: Let $x_0 \in (0,1)$, and $b_0 \neq b_1 > 0$ with $b_0 x_0 = b_1 (1-x_0)$. Consider the $1$-periodic function $a:\R \to \R$ defined by
\begin{equation*}
a(x) =
\begin{cases}
b_0^{-2} \text{ for } x \in [0,x_0), \\
b_1^{-2} \text{ for } x \in [x_0,1).
\end{cases}
\end{equation*}
Banica \cite[Theorem~1.2]{Banica2003} showed that the dispersive estimate
\begin{equation*}
\| u(t) \|_{L^\infty(\R)} \lesssim |t|^{-\frac{1}{2}} \| u_0 \|_{L^1(\R)} \quad (t \neq 0)
\end{equation*}
fails for solutions
\begin{equation*}
\left\{ \begin{array}{cl}
i \partial_t u + \partial_x (a(x) \partial_x) u &= 0 \quad (t,x) \in \R \times \R, \\
u(0) &= u_0.
\end{array} \right.
\end{equation*}
Still we find the Strichartz estimates with derivative loss
\begin{equation}
\label{eq:StrichartzKroenigPenneyModel}
\| u \|_{L^p([0,1],L^q(\R))} \lesssim \| u_0 \|_{H^{\frac{1}{p}}(\R)}
\end{equation}
for $\frac{2}{p} + \frac{1}{q} \leq \frac{1}{2}$, $p,q \geq 2$ to hold, which indicates dispersive properties on frequency dependent time scales
\begin{equation*}
\| S^A_N u(t) \|_{L^\infty(\R)} \lesssim |t|^{-\frac{1}{2}} \| S^A_N u_0 \|_{L^1(\R)}
\end{equation*}
for $0<|t| \lesssim N^{-1}$ with $S^A_N$ denoting the spectral projection to $[N,2N)$ of $-\partial_x (a(\cdot) \partial_x)$. This is reminiscent of the short-time dispersive estimates on smooth compact manifolds due to Burq--G\'erard--Tzvetkov \cite{BurqGerardTzvetkov2004}. 

\subsection{Strichartz estimates for H\"older coefficients}
\label{subsection:HoelderCoefficients}

To show Strichartz estimates for H\"older coefficients, we firstly derive Strichartz estimates with inhomogeneity in $L^1 L^2$ and with precise dependence on the Lipschitz norm and time interval. We show the following refinement of Corollary \ref{cor:StrichartzEnergyEstimate}:
\begin{proposition}
\label{prop:MuDependence}
Let $(\rho,p,q,d)$ a wave admissible Strichartz pair. Then, we find the following estimate to hold:
\begin{equation}
\label{eq:RefinedL1L2Estimate}
\| |D|^{-\rho} u \|_{L^p([0,T],L^q)} \lesssim \mu^{\frac{1}{p}} \| u \|_{L^\infty L^2(\R^d)} + \mu^{-\frac{1}{p'}} \| P_{1} u \|_{L^1([0,T], L^2)}
\end{equation}
with $\mu = T \| a \|^\ell_{\dot{C}^{0,1}} \geq 1$ and $P_1 = i \partial_t + L^{\frac{1}{2}}$.
\end{proposition}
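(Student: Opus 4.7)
I would partition $[0,T]$ into $N \sim \mu$ consecutive sub-intervals $I_k = [t_k, t_{k+1}]$ of length $T_0 = \|a\|_{\dot{C}^{0,1}}^{-\ell}$, so that on each $I_k$ the effective scaling parameter $\mu_0 = T_0 \|a\|_{\dot{C}^{0,1}}^{\ell}$ is of order one. For $t \in I_k$, Duhamel's formula gives
\[
u(t) = e^{i(t-t_k) L^{\ell/2}} u(t_k) \,-\, i \int_{t_k}^t e^{i(t-s) L^{\ell/2}} (P_1 u)(s)\,ds.
\]
The homogeneous piece is controlled by Theorem~\ref{thm:StrichartzEstimate} at unit scale on $I_k$, yielding a bound by $\|u(t_k)\|_{L^2} \le \|u\|_{L^\infty L^2}$, while the retarded Duhamel integral is handled by the Keel--Tao endpoint inhomogeneous estimate (Theorem~\ref{thm:KeelTao} with $(\tilde p, \tilde q) = (\infty, 2)$; admissible since $(p,q)$ avoids the forbidden double endpoints in two and three dimensions), yielding a bound by $\|P_1 u\|_{L^1(I_k) L^2}$, both with constants independent of $\mu$.

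Raising to the $p$-th power and summing across the $N \sim \mu$ intervals gives
\[
\||D|^{-\rho} u\|_{L^p([0,T]) L^q}^p \lesssim N \|u\|_{L^\infty L^2}^p + \sum_{k=1}^N \|P_1 u\|_{L^1(I_k) L^2}^p,
\]
so that the first sum produces the factor $N^{1/p} \sim \mu^{1/p}$ on the energy term, matching the homogeneous part of the claim. The estimate of Theorem~\ref{thm:StrichartzEstimate} is invoked here with the variant $\|u(t_k)\|_{L^2}$ replacing $\|u_0\|_{L^2}$; this is legitimate because the unit-scale Strichartz estimate on $I_k$ uses only the energy norm at the left endpoint $t_k$.

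The main obstacle is extracting the sharp factor $\mu^{-1/p'}$ on the inhomogeneous piece. The na\"ive embedding $\ell^1 \hookrightarrow \ell^p$ delivers only $(\sum_k \|P_1 u\|_{L^1(I_k) L^2}^p)^{1/p} \le \|P_1 u\|_{L^1 L^2}$, i.e.\ a prefactor of $1$, which is consistent with but weaker than the claimed $\mu^{-1/p'} \le 1$. To sharpen, I would avoid sub-interval summation on the inhomogeneous term and instead apply the Keel--Tao endpoint inhomogeneous machinery to the rescaled problem at unit frequencies: the unit-frequency dispersive decay $(1 + |t|)^{-\sigma(\ell)}$ of Proposition~\ref{prop:DispersiveEstimateUnitFrequencies} (valid on intervals of length of order one) combined with the global $L^2$-energy estimate of Proposition~\ref{prop:L2Estimate} feeds Theorem~\ref{thm:KeelTao}'s interpolation, yielding an $L^1_t L^2 \to L^p_t L^q_x$ bound for the retarded Duhamel operator with constant scaling as $\mu^{-1/p'}$ after reverse-H\"older summation across the $\sim\mu$ unit pieces of the propagator kernel on the rescaled time interval. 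Back-rescaling each frequency block and square-summing through the square function estimate of Proposition~\ref{prop:SquareFunctionEstimate} then lifts the bound from dyadic pieces to the full function, completing the proof.
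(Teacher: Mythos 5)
Your decomposition into $N\sim\mu$ unit-scale intervals, and the identification that an equispaced partition combined with $\ell^1\hookrightarrow\ell^p$ gives only a prefactor of $1$ on the inhomogeneous term, is a correct diagnosis, but the fix you sketch does not close the gap. The dispersive estimate of Proposition~\ref{prop:DispersiveEstimateUnitFrequencies} for variable coefficients is only available on time intervals of length $O(1)$ after rescaling to $\|a\|_{\dot{C}^{0,1}}\lesssim 1$, so one cannot feed it into Theorem~\ref{thm:KeelTao} directly over the full rescaled interval of length $\sim\mu$; and even if one could, the Keel--Tao retarded $L^1_t L^2\to L^p_t L^q_x$ bound would produce a constant of order $1$, not one decaying like $\mu^{-1/p'}$ as the time window lengthens. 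The phrase ``reverse-H\"older summation across the $\sim\mu$ unit pieces'' names no concrete mechanism by which the extra negative power of $\mu$ is supposed to appear, so as written this step is a gap, not a proof.

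The missing idea is to exploit the freedom in placing the cut points $t_j$: instead of equal lengths, choose the partition so that simultaneously $t_{j+1}-t_j\leq T_0$ \emph{and} $\|P_1 u\|_{L^1(I_j,L^2)}\leq \mu^{-1}\|P_1 u\|_{L^1([0,T],L^2)}$; this is always achievable with $O(\mu)$ intervals (equidistribute the $L^1$-mass of $P_1 u$, then further subdivide any interval exceeding length $T_0$). Applying the unit-scale estimate on $I_j$ then bounds the inhomogeneous contribution by $\mu^{-1}\|P_1 u\|_{L^1([0,T],L^2)}$ on every piece, and summing $O(\mu)$ copies of its $p$-th power gives
\begin{equation*}
\bigl(\mu\cdot\mu^{-p}\bigr)^{1/p}\,\|P_1 u\|_{L^1([0,T],L^2)} = \mu^{-1/p'}\,\|P_1 u\|_{L^1([0,T],L^2)},
\end{equation*}
as claimed, while the energy sum produces $\mu^{1/p}\|u\|_{L^\infty L^2}$ exactly as you noted. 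This non-uniform partition adapted to the source is precisely the device used in the paper's proof; your Littlewood--Paley/square-function remarks at the end are superfluous, since Theorem~\ref{thm:StrichartzEstimate} already delivers the local homogeneous estimate at the level of $u$ and Duhamel plus Minkowski finishes the interval-local step.
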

\begin{proof}
We find by Minkowski's inequality and the homogeneous Strichartz estimate for admissible exponents
\begin{equation*}
\begin{split}
\| |D|^{-\rho} u \|_{L^p([0,T],L^q)} &\lesssim \mu^{\frac{1}{p}} \| u_0 \|_{L^2} + \int_0^T \mu^{\frac{1}{p}} \| P_1 u(s) \|_{L^2} ds \\
&\lesssim \mu^{\frac{1}{p}} \| u \|_{L^\infty L^2} + \mu^{\frac{1}{p}} \| P_{1} u \|_{L^1 L^2}.
\end{split}
\end{equation*}
We shall see that we can improve the constants: Suppose that $\| a \|_{\dot{C}^{0,1}} \lesssim 1$ (by rescaling). Divide $[0,T]$ into intervals $I_j= [t_j, t_{j+1}]$ such that 
\begin{equation*}
\| P_{1} u \|_{L^1([t_j,t_{j+1}],L^2)} \leq T^{-1} \| P_{1} u \|_{L^1([0,T],L^2)}
\end{equation*}
and $t_{j+1} - t_j \leq 1$. Then, there are roughly $T$ intervals $I_j$ and for each interval $I_j$, we find by the above argument
\begin{equation*}
\| |D|^{-\rho} u \|_{L^p(I_j,L^q)} \lesssim \| u(t_j) \|_{L^2} + T^{-1} \| P_{1} u \|_{L^1([0,T],L^2)}.
\end{equation*}
Taking the $\ell^p$-sum over intervals $I_j$, we find
\begin{equation*}
\| |D|^{-\rho} u \|_{L^p L^q} \lesssim T^{\frac{1}{p}} \| u \|_{L^\infty L^2} + T^{- \frac{1}{p'}} \| P_{1} u \|_{L^1([0,T],L^2)}.
\end{equation*}
For arbitrary $\| a \|_{\dot{C}^{0,1}}$-norm, we find
\begin{equation*}
\| |D|^{-\rho} u \|_{L^p([0,T],L^q)} \lesssim \mu^{\frac{1}{p}} \| u_0 \|_{L^2} + \mu^{-\frac{1}{p'}} \| P_1 u \|_{L^1 L^2}.
\end{equation*}
\end{proof}
In the following we derive Strichartz estimates for coefficients with lower regularity. The estimates are supposed to be understood as a priori estimates for smooth solutions to equations with smooth coefficients (but only depending on the rough norms). Later it becomes useful that for smooth solutions to wave equations
\begin{equation*}
\partial_t^2 u = \sum_{i=1}^d \partial_{x_i} (a_i(x_i) \partial_{x_i} u), \quad (t,x) \in \R \times \R^d,
\end{equation*}
the energy
\begin{equation}
\label{eq:Energy}
E_a(u) = \int_{\R^d} |\partial_t u|^2 + \sum_{i=1}^d a_i(x_i) (\partial_{x_i} u)^2
\end{equation}
is conserved. The arguments are adapted from Tataru's works \cite{Tataru2001,Tataru2002} to our setting. Dealing with time-independent coefficients simplifies matters. The idea remains the same: After truncating the coefficients in frequency, we arrive at Lipschitz coefficients with large Lipschitz norm. The argument only works for operators in divergence form. Let $P = \partial_t^2 - \sum_{i=1}^d \partial_i ( a_i(x_i) \partial_i )$ and let $(S_N)_{N \in 2^{\mathbb{N}_0}}$ denote inhomogeneous Littlewood-Paley projectors. We record the following consequence of Proposition \ref{prop:MuDependence}:
\begin{corollary}
\label{cor:WaveEstimateDivergence}
Let $d \geq 2$, $T>0$, and $a_i \in C^{0,1}(\R)$, $i=1,\ldots,d$ satisfy \eqref{eq:Ellipticity}. Suppose that
 $T \| a_i \|_{\dot{C}^{0,1}} \leq \mu$ for some $\mu \geq 1$. Then, we find the following estimate to hold:
\begin{equation}
\label{eq:WaveEquationEllipticOperator}
\| |D|^{1-\rho} u \|_{L^p([0,T],L^q(\R^d))} \lesssim \mu^{\frac{1}{p}} \| \nabla u \|_{L^\infty L^2} + \mu^{-\frac{1}{p'}} \| P u \|_{L^1([0,T],L^2(\R^d)}
\end{equation} 
 provided that $(\rho,p,q,d)$ is a sharp wave Strichartz pair.
\end{corollary}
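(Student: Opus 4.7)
The plan is to factor the second-order wave operator $P = \partial_t^2 + L$ into two first-order half-wave operators, apply Proposition \ref{prop:MuDependence} to each piece, and then transfer from the operator-defined derivative $L^{1/2}$ on the left-hand side to the Euclidean derivative $|D|$. Since the coefficients $a_i$ are time-independent, $L^{1/2}$ commutes with $\partial_t$, so the factorization is exact with no commutator errors.

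First I would introduce the half-wave components $w_\pm := L^{1/2} u \mp i \partial_t u$, which satisfy $L^{1/2} u = \tfrac{1}{2}(w_+ + w_-)$, together with the clean identities
\begin{equation*}
(i\partial_t + L^{1/2}) w_+ = Pu, \qquad (i\partial_t - L^{1/2}) w_- = -Pu.
\end{equation*}
Applying Proposition \ref{prop:MuDependence} to $w_+$, and the analogous estimate for the operator $i\partial_t - L^{1/2}$ applied to $w_-$ (justified by the time reversal $t \mapsto -t$, which preserves all hypotheses), I would obtain
\begin{equation*}
\| |D|^{-\rho} w_\pm \|_{L^p([0,T], L^q)} \lesssim \mu^{1/p} \| w_\pm \|_{L^\infty L^2} + \mu^{-1/p'} \| Pu \|_{L^1 L^2}.
\end{equation*}
The $L^\infty L^2$ term is controlled via the Kato square-root estimate (Proposition \ref{prop:SquareRootEstimate}), which gives $\| L^{1/2} u \|_{L^2} \sim \| \nabla_x u \|_{L^2}$, hence $\| w_\pm \|_{L^\infty L^2} \lesssim \| \nabla u \|_{L^\infty L^2}$ when $\nabla u$ is read as the full space-time gradient.

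Summing the two estimates and using $L^{1/2} u = \tfrac{1}{2}(w_+ + w_-)$ then bounds $\| |D|^{-\rho} L^{1/2} u \|_{L^p L^q}$ by the desired right-hand side. The remaining step is to replace $|D|^{-\rho} L^{1/2}$ on the left by $|D|^{1-\rho}$. For any sharp wave Strichartz pair one has $0 \leq 1-\rho < 1$, so Proposition \ref{prop:EquivalenceBesovNorms} identifies $\dot{B}^{1-\rho,q}_2 \sim \dot{B}^{1-\rho,q}_{2,L}$ with constants depending only on the ellipticity bounds; combined with the square function estimate (Proposition \ref{prop:SquareFunctionEstimate}), this converts $\| |D|^{-\rho} L^{1/2} u \|_{L^p L^q}$ into $\| |D|^{1-\rho} u \|_{L^p L^q}$. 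The main obstacle is this final transfer between the Euclidean derivative $|D|$ and the operator-defined derivative $L^{1/2}$ at the $L^q$ level, uniformly across the dyadic rescalings used throughout Section \ref{section:StrichartzEstimates}; it is handled by the fact that the Gaussian heat-kernel bounds underlying Proposition \ref{prop:EquivalenceBesovNorms} have constants depending only on $(\lambda, \Lambda)$, so the argument is compatible with the scaling $\tilde{a}_i(x) = a_i(2^{-k} x)$ that preserves the ellipticity constants.
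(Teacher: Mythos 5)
Your proof is correct and matches the approach implied by the paper, which states the corollary as a direct consequence of Proposition \ref{prop:MuDependence} without further detail: factor $P=\partial_t^2+L$ into the two half-wave operators via $w_\pm = L^{1/2}u \mp i\partial_t u$, apply Proposition \ref{prop:MuDependence} to each branch (the $w_-$ branch by time reversal), control $\|w_\pm\|_{L^\infty L^2}$ by the Kato square-root estimate, and absorb the remaining discrepancy between $|D|^{-\rho}|D_L|$ and $|D|^{1-\rho}$ by the equivalence of $\dot{W}^{s,q}$ and $\dot{W}^{s,q}_L$ (equivalently, the Besov equivalence of Proposition \ref{prop:EquivalenceBesovNorms} together with Proposition \ref{prop:SquareFunctionEstimate}), valid because $0\le\rho<1$ for all admissible sharp wave pairs once $(2,\infty,3)$ is excluded. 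The only minor imprecision is the assertion that $0\le 1-\rho<1$: at the trivial endpoint $q=2$, $p=\infty$ one has $\rho=0$ and $1-\rho=1$, but that case reduces to the $L^\infty L^2$ energy estimate and needs no Besov transfer, so the argument is unaffected.
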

We turn to the proof of Theorem \ref{thm:HolderCoefficients}:

\begin{proof}[Proof of Theorem \ref{thm:HolderCoefficients}]
By scaling invariance, we can suppose that $T=1$ and $\| a_i \|_{\dot{B}^s_{\infty,1}} \leq \mu$. Let $a_{i,\leq N}$ denote the coefficients with Fourier transform smoothly truncated at frequencies $\frac{N}{10}$, and
$P_N = \partial_t^2 - \sum_{i=1}^d \partial_i (a_{i,\leq N} \partial_i)$. We first argue that it is enough to show
\begin{equation}
\label{eq:DyadicEstimate}
N^{1-\rho - \frac{\sigma}{p}} \| S_N u \|_{L^p L^q} \lesssim \mu^{\frac{1}{p}} \| \nabla S_N u \|_{L^\infty L^2} + \mu^{-\frac{1}{p'}} \| |D|^{-\sigma} P_N S_N u \|_{L^1 L^2}.
\end{equation}
Since the spatial frequencies of $P_N S_N u$ are comparable to $N$, \eqref{eq:DyadicEstimate} is equivalent to
\begin{equation}
\label{eq:DyadicEstimateII}
N^{1-\rho} \| S_N u \|_{L^p L^q} \lesssim (N^\sigma \mu)^{\frac{1}{p}} \| \nabla S_N u \|_{L^\infty L^2} + (\mu N^\sigma)^{-\frac{1}{p'}} \| P_N S_N u \|_{L^1 L^2}.
\end{equation}
After observing that $\| a_{i, \leq N} \|_{\dot{C}^{0,1}} \lesssim \mu N^\sigma$,  \eqref{eq:DyadicEstimateII} follows from Corollary \ref{cor:WaveEstimateDivergence}. To see that \eqref{eq:DyadicEstimate} implies \eqref{eq:DyadicEstimateHolderCoefficients}, it suffices to see that
\begin{equation*}
\| |D|^{-\sigma} (S_N P - P_N S_N) u \|_{L^1 L^2} \lesssim \mu \| \nabla u \|_{L^\infty L^2}.
\end{equation*}
Let $v = \nabla_x u$, which is compactly supported in time, and since $P$ is in divergence form, it suffices to show the fixed-time estimate
\begin{equation*}
\| |D|^{1-\sigma} (S_N a_i - a_{i, \leq N} S_N) v(t) \|_{L^2} \lesssim \mu \| v(t) \|_{L^2}
\end{equation*}
or
\begin{equation*}
\| (S_N a_i - a_{i, \leq N} S_N) v(t) \|_{L^2} \lesssim N^{-s} \mu \| v(t) \|_{L^2}.
\end{equation*}
We prove this by considering dyadic blocks $S_K a_i = a_{i,K}$. We have $K^s \| a_{i,K} \|_{L^\infty} \lesssim \mu$. For $K \gg \frac{N}{10}$, the estimate reads
\begin{equation*}
\| S_N (a_{i,K} v(t)) \|_{L^2} \lesssim N^{-s} K^s \| a_{i,K} \|_{L^\infty} \| v(t) \|_{L^2},
\end{equation*}
which is immediate. For $K \lesssim \frac{N}{10}$, we have to prove
\begin{equation*}
\| (S_N a_{i,K} - a_{i,K} S_N) v(t) \|_{L^2} \lesssim N^{-s} K^s \| a_{i,K} \|_{L^\infty} \| v(t) \|_{L^2}.
\end{equation*}
By rescaling $K \to 1$, $N \to \frac{N}{K}$, it suffices to prove
\begin{equation*}
\| [S_N, a_{i,1}] v(t) \|_{L^2} \lesssim N^{-s} \| a_{i,1} \|_{L^\infty} \| v(t) \|_{L^2},
\end{equation*}
or, equivalently,
\begin{equation*}
\| [S_N, a_{i,1}] v(t) \|_{L^2} \lesssim N^{-s} \| \nabla_x a_{i,1} \|_{L^\infty} \| v(t) \|_{L^2}.
\end{equation*}
This is a well-known commutator estimate as the kernel is given by
\begin{equation*}
|K(x,y)| =  |k_N(x,y) (a_{i,1}(x) - a_{i,1}(y))| \lesssim_M (1+N|x-y|)^{-M} |x-y| \| \nabla_x a_{i,1} \|_{L^\infty}.
\end{equation*}
The proof is complete.
\end{proof}

Note that there is slack in the proof as the commutator estimate gives
\begin{equation*}
\| [S_N, a_{i,1}] v(t) \|_{L^2} \lesssim N^{-1} \| \nabla_x a_{i,1} \|_{L^\infty} \| v(t) \|_{L^2}.
\end{equation*}
Moreover, Strichartz estimates for $P$ are suitable to handle lower order perturbations as $\sum_{i=1}^d b_i(t,x) \partial_i + c(t,x)$. However, the above argument cannot deal with Schr\"odinger equations  with rougher coefficients as the free part is estimated in $L^2$ and not in $H^1$. The Duhamel integral does not gain one derivative. 

To obtain more inhomogeneous estimates for the wave equation, we record the following consequence of Theorem \ref{thm:HolderCoefficients} for solutions to
\begin{equation}
\label{eq:FreeSolutions}
\left\{ \begin{array}{cl}
\partial_t^2 u &= P u, \quad (t,x) \in \R \times \R^d, \\
u(0) &= f, \quad \partial_t u(0) = g.
\end{array} \right.
\end{equation}
 combined with conservation of energy \eqref{eq:Energy}.
\begin{corollary}
Let $d \geq 2$, $s \in (0,1)$, $\sigma = 1-s$, and $a_i \in C^{\infty}(\R)$, $i=1,\ldots,d$ satisfy \eqref{eq:Ellipticity}, and let $(\rho,p,q,d)$ be a wave Strichartz pair. Suppose that $u$ is a smooth solution to \eqref{eq:FreeSolutions}. Then, we find the following estimate to hold:
\begin{equation*}
\sup_{N} N^{1-\rho-\frac{\sigma}{p}} \| S_N u \|_{L^p([0,T],L^q)} \lesssim  \| f \|_{\dot{H}^1(\R^d)} + \| g \|_{L^2(\R^d)}
\end{equation*}
with implicit constant depending only on $T$, $\| a_i \|_{\dot{B}^s_{\infty,1}}$, $(\rho,p,q,d)$, and the ellipticity constants.
\end{corollary}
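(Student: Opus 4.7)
The plan is to apply Theorem \ref{thm:HolderCoefficients} directly to the free solution $u$ and then use conservation of the energy \eqref{eq:Energy} to convert the right-hand side into the stated initial-data norms.

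First, since $u$ solves \eqref{eq:FreeSolutions}, we have $Pu = 0$, so the inhomogeneous term $\||D|^{-\sigma} Pu\|_{L^1([0,T],L^2)}$ in estimate \eqref{eq:DyadicEstimateHolderCoefficients} vanishes. With $\mu$ chosen as in Theorem \ref{thm:HolderCoefficients}, namely any constant satisfying $\mu \geq \max(1, T^s \max_i \|a_i\|_{\dot B^s_{\infty,1}})$ (finite here because the $a_i$ are smooth), the theorem reduces the required bound to
\[
\mu^{1/p} \|\nabla u\|_{L^\infty([0,T],L^2(\R^d))} \lesssim \|f\|_{\dot H^1(\R^d)} + \|g\|_{L^2(\R^d)},
\]
with implicit constant depending on $T$, $\|a_i\|_{\dot B^s_{\infty,1}}$, $(\rho,p,q,d)$, and the ellipticity constants.

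Second, I would establish the above by energy conservation. For smooth solutions of \eqref{eq:FreeSolutions}, multiplying $Pu = 0$ by $\partial_t u$ and integrating by parts in space (using that the coefficients $a_i$ are time-independent and that $P$ is in divergence form) shows that
\[
E_a(u(t)) = \int_{\R^d} |\partial_t u(t,x)|^2 + \sum_{i=1}^d a_i(x_i) (\partial_{x_i} u(t,x))^2 \, dx
\]
is independent of $t$. The ellipticity condition \eqref{eq:EllipticityIntroduction} then yields the two-sided comparison
\[
\lambda \bigl( \|\partial_t u(t)\|_{L^2}^2 + \|\nabla_x u(t)\|_{L^2}^2 \bigr) \leq E_a(u(t)) \leq \Lambda \bigl( \|\partial_t u(t)\|_{L^2}^2 + \|\nabla_x u(t)\|_{L^2}^2 \bigr),
\]
and hence $\|\nabla u(t)\|_{L^2}^2 \lesssim E_a(u(0)) \lesssim \|g\|_{L^2}^2 + \|f\|_{\dot H^1}^2$ uniformly in $t \in [0,T]$. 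Inserting this into the previous display gives the claim.

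There is no serious obstacle: the statement is obtained by concatenating the two ingredients already present in the excerpt, namely Theorem \ref{thm:HolderCoefficients} and the energy identity \eqref{eq:Energy}, and the smoothness of $u$ and of the $a_i$ more than justifies the integration by parts. The only point worth double-checking is that $\nabla u$ in Theorem \ref{thm:HolderCoefficients} is interpreted as the full space-time gradient, but since energy conservation controls $\partial_t u$ and $\nabla_x u$ simultaneously, this causes no issue.
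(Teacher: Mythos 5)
Your proposal is correct and follows the same route as the paper, which states this corollary precisely as a direct consequence of Theorem \ref{thm:HolderCoefficients} ``combined with conservation of energy \eqref{eq:Energy}'': setting $Pu=0$ kills the inhomogeneous term, and energy conservation together with ellipticity bounds $\|\nabla u\|_{L^\infty([0,T],L^2)}$ by $\|f\|_{\dot H^1}+\|g\|_{L^2}$. The only cosmetic point is that the two-sided comparison should read $\min(1,\lambda)(\|\partial_t u\|_{L^2}^2+\|\nabla_x u\|_{L^2}^2)\le E_a(u)\le\max(1,\Lambda)(\cdots)$ since the $|\partial_t u|^2$ term in \eqref{eq:Energy} carries no coefficient, but this does not affect the argument.
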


As a consequence of trading derivatives, we find the following homogeneous Strichartz estimate for the half-wave equation:
\begin{equation}
\label{eq:RoughHalfWaveStrichartzEstimates}
\| |D|^{-\rho-\frac{\sigma}{p}} e^{itL^{\frac{1}{2}}} f \|_{L^p([0,T],L^q(\R^d))} \lesssim \| f \|_{L^2(\R^d)}.
\end{equation}
In fact, for the sharp Strichartz estimates, it is easy to see that it suffices to trade less than one derivative. For non-sharp pairs one reduces to the sharp case first by Sobolev embedding. We prove inhomogeneous estimates by using again the taylored version of the Christ--Kiselev Lemma \ref{lem:ChristKiselev}.

\begin{proposition}
\label{prop:InhomogeneousWaveEstimatesHoelder}
Let $d \geq 2$, $s \in (0,1)$, $\sigma = 1-s$, and $(\rho,p,q,d)$, $(\tilde{\rho},\tilde{p},\tilde{q},d)$ be two wave Strichartz pairs with $p > \tilde{p}'$. Suppose that $a_i \in C^\infty(\R)$ satisfies \eqref{eq:Ellipticity} and $u$ solves
\begin{equation}
\label{eq:InhomogeneousRoughEquation}
\left\{ \begin{array}{cl}
\partial_t^2 u - \sum_{i=1}^d \partial_{x_i} (a(x_i) \partial_{x_i} u) &= F, \quad (t,x) \in [0,T] \times \R^d \\
u(0) &= f, \quad \partial_t u(0) = g.
\end{array} \right.
\end{equation}
Then, we find the following estimate to hold for $\rho_1 > \rho + \frac{\sigma}{p}$ and $\rho_2 > \rho + \frac{\sigma}{p}$:
\begin{equation}
\label{eq:InhomogeneousRoughEstimate}
\| \langle D \rangle^{1-\rho_1} u \|_{L^p([0,T],L^q(\R^d))} \lesssim \| f \|_{H^1(\R^d)} + \| g \|_{L^2(\R^d)} + \| \langle D \rangle^{\rho_2 } F \|_{L^{\tilde{p}'}([0,T],L^{\tilde{q}'}(\R^d))}
\end{equation}
with implicit constant only depending on $T$, $\| a_i \|_{\dot{C}^{0,1}}$, and the ellipticity constants.
\end{proposition}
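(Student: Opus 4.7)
The plan is to reduce the second-order wave equation to a pair of half-wave equations, apply the homogeneous Strichartz estimate \eqref{eq:RoughHalfWaveStrichartzEstimates} to the free contribution, perform a $TT^*$ argument to upgrade \eqref{eq:RoughHalfWaveStrichartzEstimates} to an untruncated inhomogeneous Strichartz bound on the Duhamel integral, and then invoke the Christ--Kiselev lemma to restrict the time integration to $s<t$.

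First I would write $u = u_{\mathrm{hom}} + u_{\mathrm{inh}}$ by Duhamel's formula, with
\[
u_{\mathrm{hom}}(t) = \cos(t L^{1/2}) f + L^{-1/2} \sin(t L^{1/2}) g, \qquad u_{\mathrm{inh}}(t) = \int_0^t L^{-1/2} \sin((t-s) L^{1/2}) F(s)\, ds,
\]
decompose the trigonometric operators into the half-wave propagators $e^{\pm i t L^{1/2}}$, and apply \eqref{eq:RoughHalfWaveStrichartzEstimates} to both terms of $u_{\mathrm{hom}}$. The $L^{-1/2}$ in front of $g$ trades one derivative, yielding a bound by $\|f\|_{\dot H^1} + \|g\|_{L^2}$; the passage from the homogeneous weight $|D|^{-\rho-\sigma/p}$ to the inhomogeneous weight $\langle D\rangle^{1-\rho_1}$ is absorbed by the strict inequality $\rho_1 > \rho + \sigma/p$ via Bernstein on low frequencies (together with the Kato identity of Proposition \ref{prop:SquareRootEstimate}).

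For $u_{\mathrm{inh}}$, I would first establish the untruncated inhomogeneous estimate
\[
\Bigl\| \langle D\rangle^{1-\rho_1} \int_0^T L^{-1/2} \sin((t-s) L^{1/2}) F(s)\, ds \Bigr\|_{L^p_t L^q_x} \lesssim \| \langle D \rangle^{\rho_2} F \|_{L^{\tilde{p}'}_t L^{\tilde{q}'}_x}
\]
by a standard $TT^*$ composition. Namely, \eqref{eq:RoughHalfWaveStrichartzEstimates} at the exponents $(\tilde{\rho},\tilde{p},\tilde{q})$ dualizes to $\| \int e^{-isL^{1/2}} |D|^{-\tilde\rho-\sigma/\tilde p} G(s)\, ds \|_{L^2} \lesssim \| G \|_{L^{\tilde{p}'} L^{\tilde{q}'}}$, and composing with \eqref{eq:RoughHalfWaveStrichartzEstimates} at $(\rho,p,q)$ gives the same bound with $|D|^{-\rho-\sigma/p}\!\int e^{i(t-s)L^{1/2}} |D|^{-\tilde\rho-\sigma/\tilde p}$ on the left. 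Setting $G = |D|^{\tilde\rho+\sigma/\tilde p + 1} F$, absorbing the $L^{-1/2}\sim |D|^{-1}$ into the weights, and using the strict slack in $\rho_1$, $\rho_2$ (interpreted in the natural sense $\rho_2 > \tilde\rho + \sigma/\tilde p$) to swap $|D|$-weights for $\langle D\rangle$-weights via a high/low-frequency split yields the displayed bound.

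Finally, since $p > \tilde{p}'$, I would apply Lemma \ref{lem:ChristKiselev} to the operator-valued kernel $K(s,t) = \mathbf{1}_{[0,T]}(t)\mathbf{1}_{[0,T]}(s)\, \langle D\rangle^{1-\rho_1} L^{-1/2} \sin((t-s)L^{1/2}) \langle D\rangle^{-\rho_2}$, which promotes the untruncated bound to the retarded estimate $\int_0^t$, completing the proof of \eqref{eq:InhomogeneousRoughEstimate}. The principal obstacle is the careful bookkeeping of the derivative slack in $\rho_1-\rho-\sigma/p$ and $\rho_2-\tilde\rho-\sigma/\tilde p$: these strict inequalities are precisely what permits the passage from the sharp but lossy homogeneous Strichartz estimate \eqref{eq:RoughHalfWaveStrichartzEstimates} to a bound expressed in terms of the uniform Japanese-bracket weight $\langle D\rangle$, via Sobolev embedding between different Strichartz pairs.
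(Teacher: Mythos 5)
Your proposal follows the paper's own proof essentially line for line: the same Duhamel decomposition, the same application of the rough half-wave Strichartz estimate \eqref{eq:RoughHalfWaveStrichartzEstimates} to the free part, the same $TT^*$ composition across two Strichartz pairs to obtain the untruncated inhomogeneous bound, and the same invocation of Lemma \ref{lem:ChristKiselev} with $p>\tilde{p}'$ to pass to the retarded integral. You also read the condition on $\rho_2$ in the correct form $\rho_2 > \tilde{\rho} + \sigma/\tilde{p}$, consistent with the $\langle D\rangle^{\tilde{\rho}+\sigma/\tilde{p}}$ weight that actually emerges in the paper's proof, since the statement's $\rho_2 > \rho + \sigma/p$ appears to be a typo.
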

\begin{proof}
We write
\begin{equation}
\label{eq:DuhamelRoughSolution}
u(t) = \cos(t L^{\frac{1}{2}}) f + \frac{\sin(t L^{\frac{1}{2}})}{L^{\frac{1}{2}}} g + \int_0^t \frac{\sin((t-s) L^{\frac{1}{2}})}{L^{\frac{1}{2}}} F(s) ds.
\end{equation}
The homogeneous components are estimated by \eqref{eq:RoughHalfWaveStrichartzEstimates}. We turn to the forcing term: Consider
\begin{equation*}
T: L^2(\R^d) \to L^p([0,T],L^q(\R^d)), \quad f \mapsto \langle D \rangle^{-\rho_1 } e^{it L^{\frac{1}{2}}} f
\end{equation*}
whose boundedness follows from the homogeneous estimates \eqref{eq:RoughHalfWaveStrichartzEstimates}. The dual operator with respect to the $L^2$-scalar product is given by
\begin{equation*}
T^* : L^{p'}([0,T],L^{q'}(\R^d)) \to L^2(\R^d), \quad F \mapsto \int_0^T e^{-is L^{\frac{1}{2}}} \langle D \rangle^{-\rho_1} F(s) ds.
\end{equation*}
We consider the composition with different exponents
\begin{equation*}
\begin{split}
TT^* : L^{\tilde{p}'}([0,T],L^{\tilde{q}'}(\R^d)) &\to L^p([0,T],L^q(\R^d)), \\
F &\mapsto \langle D \rangle^{-\rho_1} \int_0^T e^{i(t-s)L^{\frac{1}{2}}} (\langle D \rangle^{-\rho_2} F)(s) ds,
\end{split}
\end{equation*}
which is also bounded. Since by assumption $\tilde{p}' < p$, we can invoke Lemma \ref{lem:ChristKiselev} to find the bound
\begin{equation*}
\| \langle D \rangle^{-\rho_1} \int_0^t e^{i(t-s)L^{\frac{1}{2}}} F(s) ds \|_{L^p([0,T],L^q(\R^d))} \lesssim \| \langle D \rangle^{\tilde{\rho}+\frac{\sigma}{\tilde{p}}} F \|_{L^{\tilde{p}'}([0,T],L^{\tilde{q}'}(\R^d))}.
\end{equation*}
This yields the claim after trading $|D|$ to $|D_L|$.
\end{proof}

\subsection{Applications to nonlinear equations}
\label{subsection:NonlinearEquations}
With the usual Strichartz estimates at disposal, it is straightforward to prove well-posedness for a large class of equations with power-type nonlinearity. Here we consider as an example Schr\"odinger equations with power type nonlinearity in $L^2(\R^d)$:
\begin{equation}
\label{eq:NonlinearSEQ}
\left\{ \begin{array}{cl}
i \partial_t u + \sum_{i=1}^d \partial_{x_i} (a_i(x_i) \partial_{x_i}) u &= \mu |u|^{p-1} u , \quad (t,x) \in \R \times \R, \\
u(0) &= u_0 \in L^2(\R^d)
\end{array} \right.
\end{equation}
with $d \geq 1$ and $a_i \in BV(\R)$ satisfying \eqref{eq:Ellipticity} and $Var(\log(a_i)) < 2 \pi$. The results in the constant-coefficient case are due to Tsutsumi \cite{Tsutsumi1987}. For further explanation, we refer to \cite[Section~3.3]{Tao2006}. The following holds by substituting the free Strichartz estimates for the Schr\"odinger equation with the estimates from Theorems \ref{thm:GlobalEstimates} and \ref{thm:GlobalInhomogeneousStrichartzEstimates}:
\begin{theorem}[$L^2$-well-posedness]
Let $1<p<1+\frac{4}{d}$ and $\mu \in \{-1;1\}$. Then, \eqref{eq:NonlinearSEQ} is analytically globally well-posed in $L^2(\R^d)$ in the subcritical sense. If $p = 1 + \frac{4}{d}$, then \eqref{eq:NonlinearSEQ} is analytically locally well-posed in the critical sense.
\end{theorem}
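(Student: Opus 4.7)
The plan is to run the standard Cazenave--Weissler contraction mapping argument, using Theorems \ref{thm:GlobalEstimates} and \ref{thm:GlobalInhomogeneousStrichartzEstimates} in place of the constant-coefficient Strichartz estimates. First I would fix the Strichartz exponent $(q,r)$ adapted to the nonlinearity by setting $r = p+1$ and determining $q$ from Schr\"odinger admissibility $\frac{2}{q} + \frac{d}{r} = \frac{d}{2}$. H\"older in space gives $\| |u|^{p-1} u \|_{L^{r'}_x} \lesssim \| u \|_{L^{r}_x}^p$, and then H\"older in time yields
\begin{equation*}
\| |u|^{p-1} u \|_{L^{q'}([0,T], L^{r'})} \lesssim T^{\theta} \| u \|_{L^{q}([0,T], L^{r})}^p
\end{equation*}
with $\theta = 1 - \frac{d(p-1)}{4} > 0$ precisely when $p < 1 + \frac{4}{d}$ and $\theta = 0$ at the critical exponent.

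Second, I would set up the Duhamel operator
\begin{equation*}
\Phi(u)(t) = e^{itL} u_0 - i \mu \int_0^t e^{i(t-s)L} (|u|^{p-1} u)(s) \, ds,
\end{equation*}
and combine the homogeneous bound from Theorem \ref{thm:GlobalEstimates} (with $\ell = 2$) with the diagonal inhomogeneous bound from Theorem \ref{thm:GlobalInhomogeneousStrichartzEstimates} (taking $(\tilde{p},\tilde{q}) = (p,q)$, $\tilde{\rho} = \rho = 0$) to obtain
\begin{equation*}
\| \Phi(u) \|_{L^{q}([0,T], L^{r})} \lesssim \| u_0 \|_{L^2} + T^{\theta} \| u \|_{L^{q}([0,T], L^{r})}^{p}.
\end{equation*}
An analogous estimate for $\Phi(u) - \Phi(v)$ using the pointwise Lipschitz bound $\big| |u|^{p-1} u - |v|^{p-1} v \big| \lesssim (|u|^{p-1} + |v|^{p-1})|u-v|$ and H\"older then makes $\Phi$ a contraction on the ball $\{ \| u \|_{L^q_t L^r_x} \leq 2 C \| u_0 \|_{L^2} \}$ in $L^{q}([0,T], L^{r})$ provided $T^{\theta} \| u_0 \|_{L^2}^{p-1}$ is small. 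This yields a unique fixed point with analytic dependence on $u_0$ (as $\Phi$ is a convergent power series in $u$). The local time of existence depends only on $\| u_0 \|_{L^2}$, and since $L$ is self-adjoint with respect to the weighted scalar product of Section \ref{section:Preliminaries} and the nonlinearity is gauge-invariant, $L^2$-mass is conserved, so the local solution extends globally.

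At the critical exponent $p = 1 + \frac{4}{d}$, the factor $T^\theta$ disappears and smallness of $T$ alone cannot close the contraction. Instead I would exploit that by Theorem \ref{thm:GlobalEstimates} the orbit $e^{itL} u_0$ lies in $L^{q}(\R, L^{r}(\R^d))$, so by absolute continuity of the integral one can pick $T$ (depending on the profile of $u_0$, not just its norm) with $\| e^{itL} u_0 \|_{L^{q}([0,T], L^{r})} \leq \eta$ for $\eta$ small, and then run the contraction on the ball of radius $2\eta$. This produces a critical local solution with analytic data-to-solution map.

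The main point to check is not the nonlinear analysis, which is entirely standard, but that the required Strichartz estimates with matching source and target exponents are available: Corollary \ref{cor:InhomogeneousChristKiselev}, which proceeds via Christ--Kiselev, excludes the diagonal because it requires $p < \tilde{p}'$, whereas the Keel--Tao-based Theorem \ref{thm:GlobalInhomogeneousStrichartzEstimates} covers exactly the diagonal pair we need (including the $L^2$-critical admissible pair in dimensions $d \geq 2$; the case $d = 1$ is already non-endpoint). Once this is observed, no further obstacle arises and one simply imports the nonlinear iteration from \cite[Section~3.3]{Tao2006} or \cite{Tsutsumi1987}.
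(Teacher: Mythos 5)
Your proposal is correct and follows the same approach the paper takes: the paper itself simply cites Tsutsumi and Tao's textbook treatment and states that one substitutes the constant-coefficient Strichartz estimates by Theorems~\ref{thm:GlobalEstimates} and~\ref{thm:GlobalInhomogeneousStrichartzEstimates}, which is exactly the Cazenave--Weissler contraction you carry out. Your observation that the diagonal inhomogeneous estimate must come from the Keel--Tao-based Theorem~\ref{thm:GlobalInhomogeneousStrichartzEstimates} (rather than the Christ--Kiselev-based Corollary~\ref{cor:InhomogeneousChristKiselev}, which excludes the diagonal) is apt, and the mass-conservation step is fine since the scalar divergence-form operator $L=-\sum_i\partial_{x_i}(a_i(x_i)\partial_{x_i})$ is self-adjoint in the standard (unweighted) $L^2$ inner product, so the gauge-invariance argument closes as usual.
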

We remark that also the $H^s$-theory extends as long as it is valid to trade derivatives $\| |D|^s f \|_{L^q} \sim \| |D_L|^{s} f \|_{L^q}$.

\section{Spectral multiplier estimates and Bochner--Riesz means}
\label{section:BochnerRiesz}
In the following we consider the non-negative self-adjoint operator
\begin{equation}
\label{eq:SchroedingerOperatorBV}
L = - \sum_{i=1}^d \partial_{x_i} (a_i(x_i) \partial_{x_i} )
\end{equation}
in $L^2(\R^d)$ for $a_i \in BV(\R)$ satisfying \eqref{eq:Ellipticity} and $Var(\log(a_i)) < 2\pi$. We derive consequences of the dispersive properties worked out in Proposition \ref{prop:GlobalDispersiveEstimate} for spectral restriction, multiplier estimates, and Bochner-Riesz means. The results follow from the analysis of Chen \emph{et al.} \cite{ChenOuhabazSikoraYan2016} for $L$ being a self-adjoint operator on a doubling metric space. Since $L$ is self-adjoint, it admits a spectral resolution $E_L(\lambda)$, and for $F:[0,\infty) \to \C$ a bounded Borel function, the operator
\begin{equation*}
F(L) = \int_{0}^\infty F(\lambda) dE_L(\lambda)
\end{equation*}
defines an $L^2$-bounded operator by the spectral theorem. A special case are the Bochner-Riesz means: We let
\begin{equation*}
S^\delta_R(\lambda) = (1-\lambda/R^2)^\delta_+
\end{equation*}
for $\delta > 0$ and $R > 0$. $S_R^\delta(L)$ is referred to as Bochner--Riesz mean of order $\delta$ corresponding to $L$. In the classical case $L=-\Delta$, the Bochner--Riesz conjecture states that $S^\delta_R(L):L^p(\R^d) \to L^p(\R^d)$ is bounded provided that $\delta > \max\big( d\big| \frac{1}{2} - \frac{1}{p} \big| - \frac{1}{2}, 0 \big)$ for $1 \leq p \leq \infty$ and $d \geq 2$. Note that for $p=2$, $\delta = 0$ is trivially admissible; see also \cite{Fefferman1970}. The conjecture was verified for $d=2$ by Carleson--Sj\"olin \cite{CarlesonSjoelin1972} and H\"ormander \cite{Hoermander1973}. By using the Stein--Tomas restriction theorem and finite speed of propagation, Fefferman verified the conjecture for $d \geq 3$ and $\max(p,p') \geq \frac{2d+2}{d-1}$ (note the self-duality). Further progress was closely tied to work on the restriction conjecture \cite{Lee2004,BourgainGuth2011,GuthHickmanIliopoulou2019}. In terms of the spectral measure $dE_{\sqrt{- \Delta}}(\lambda)$ the classical Stein--Tomas restriction theorem is equivalent to
\begin{equation}
\label{eq:SteinTomasRestrictionTT^*}
\| dE_{\sqrt{-\Delta}}(\lambda) \|_{p \to p'} = \Vert \frac{\lambda^{d-1}}{(2 \pi)^d} R_\lambda^* R_\lambda \|_{p \to p'} \leq C \lambda^{d\big( \frac{1}{p} - \frac{1}{p'} \big) - 1}
\end{equation}
for all $p \in [1,(2d+2)/(d+3)]$. In the first part of Chen \emph{et al.} \cite{ChenOuhabazSikoraYan2016} spectral multiplier estimates, which are sharp in general, were derived based on the following Stein--Tomas restriction condition:
\begin{equation*}
\big( \text{ST} \big)^q_{p,s} \quad \| F(\sqrt{L}) \|_{p \to s} \leq C R^{d\big( \frac{1}{p} - \frac{1}{s} \big)} \| \delta_R F \|_{L^q}.
\end{equation*}
We denote dilations by $\delta_R F(x) = F(Rx)$. These estimates in turn can be derived from \eqref{eq:SteinTomasRestrictionTT^*}. We have the following as an instance of \cite[Proposition~II.4]{ChenOuhabazSikoraYan2016}. Note that the required smoothing estimates
\begin{equation}
\label{eq:SmoothingEstimate}
\| \exp(-tL) \|_{p \to \frac{2d}{d+2}} \leq K t^{-\frac{d}{2} \big( \frac{1}{p} - \frac{d+2}{2d} \big)}
\end{equation}
for all $t>0$ and $1 \leq p \leq \frac{2d}{d+2}$ follow from the pointwise heat kernel estimate \eqref{eq:HeatKernelEstimate}.

\begin{proposition}
\label{prop:SpectralRestrictionSchroedingerOperator}
Let $d \geq 3$, and $L$ be defined as in \eqref{eq:SchroedingerOperatorBV}. Then, for all $1 \leq p \leq \frac{2d}{d+2}$ and $\lambda \geq 0$, the following estimate holds:
\begin{equation}
\label{eq:SpectralRestrictionEstimate}
\| d E_{\sqrt{L}}(\lambda) \|_{p \to p'} \leq C\lambda^{d \big( \frac{1}{p} - \frac{1}{p'} \big) - 1}.
\end{equation}
\end{proposition}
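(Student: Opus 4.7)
The plan is to verify the hypotheses of Proposition II.4 of \cite{ChenOuhabazSikoraYan2016}, which produces exactly this type of spectral measure bound from two ingredients: the smoothing estimate \eqref{eq:SmoothingEstimate} and a global $L^1 \to L^\infty$ dispersive bound for the Schr\"odinger evolution $e^{itL}$ with sharp decay $|t|^{-d/2}$. Once both inputs are in place, the abstract result gives the Stein--Tomas condition $(\mathrm{ST})^2_{p,p'}$ for $\sqrt{L}$ in the range $1 \leq p \leq \frac{2d}{d+2}$, and the $TT^*$ identification indicated in \eqref{eq:SteinTomasRestrictionTT^*} translates this into \eqref{eq:SpectralRestrictionEstimate}.

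First I would dispose of the smoothing estimate. The pointwise Gaussian kernel bound \eqref{eq:HeatKernelEstimate} was deduced in the proof of Proposition \ref{prop:EquivalenceBesovNorms} from the tensor-product structure of the heat semigroup together with the one-dimensional bounds of \cite{BurqPlanchon2006}. It gives $\| e^{-tL} \|_{L^1 \to L^\infty} \lesssim t^{-d/2}$, and together with the trivial contraction $\| e^{-tL} \|_{L^2 \to L^2} \leq 1$, Riesz--Thorin interpolation yields
\begin{equation*}
\| e^{-tL} \|_{L^p \to L^{2d/(d+2)}} \lesssim t^{-\frac{d}{2}\left(\frac{1}{p} - \frac{d+2}{2d}\right)}, \qquad 1 \leq p \leq \tfrac{2d}{d+2},
\end{equation*}
with constant depending only on the ellipticity constants. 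This is precisely \eqref{eq:SmoothingEstimate}.

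For the dispersive input I would invoke \eqref{eq:GlobalDispersiveEstimateSEQ} of Proposition \ref{prop:GlobalDispersiveEstimate}. Although that result is phrased for $a_i \in C^{0,1}(\R)$, its constant depends only on the ellipticity parameters $\lambda, \Lambda$ and on $\mathrm{Var}(\log(a_i))$; hence the mollification and weak-compactness procedure of Subsection \ref{subsection:BVCoefficients}, adapted from \cite[Proposition~1.3]{BurqPlanchon2006}, transfers the bound verbatim to $a_i \in BV(\R)$ obeying the standing hypothesis $\mathrm{Var}(\log(a_i)) < 2 \pi$. The output is the global Schr\"odinger dispersive estimate
\begin{equation*}
\| e^{itL} \|_{L^1(\R^d) \to L^\infty(\R^d)} \lesssim |t|^{-d/2}, \qquad t \neq 0,
\end{equation*}
which is the second hypothesis of Proposition II.4 of \cite{ChenOuhabazSikoraYan2016}.

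The hard part is not the present step but rather its preparation: essentially all of the work has already been done in establishing the global Schr\"odinger dispersive bound, where the one-dimensional transport property and the variation threshold $2\pi$ from \cite{BeliIgnatZuazua2016} are indispensable. Once that estimate and the Gaussian heat kernel bound are in hand, Proposition II.4 of \cite{ChenOuhabazSikoraYan2016} applies off the shelf, and \eqref{eq:SpectralRestrictionEstimate} is its immediate conclusion. The restriction $d \geq 3$ enters only to guarantee $p' = 2d/(d-2) < \infty$ at the endpoint of the admissible range.
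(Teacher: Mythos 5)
Your proof is correct and takes essentially the same route as the paper: the paper's argument is precisely a citation of \cite[Proposition~II.4]{ChenOuhabazSikoraYan2016} together with the observation that the smoothing estimate \eqref{eq:SmoothingEstimate} follows from the Gaussian heat kernel bound \eqref{eq:HeatKernelEstimate} and that the dispersive input is supplied by \eqref{eq:GlobalDispersiveEstimateSEQ}. Your write-up simply makes explicit the interpolation step for \eqref{eq:SmoothingEstimate} and flags the passage from $C^{0,1}$ to $BV$ coefficients, both of which the paper leaves implicit.
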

As a consequence of \eqref{eq:SpectralRestrictionEstimate} we derive the Stein--Tomas restriction (cf. \cite[p.~267]{ChenOuhabazSikoraYan2016})
\begin{equation*}
\begin{split}
\| F(\sqrt{L}) \|_{p \to p'} &= \big\| \int_0^R F(\lambda) dE_{\sqrt{L}} (\lambda) \|_{p \to p'} \leq C \int_0^R |F(\lambda)| \lambda^{d \big(\frac{1}{p} - \frac{1}{p'}\big) - 1} d\lambda \\
&\leq C R^{d \big( \frac{1}{p} - \frac{1}{p'} \big) } \| \delta_R F \|_1.
\end{split} 
\end{equation*}
By a $TT^*$-argument this is $(\text{ST})^2_{p,2}$. This yields the following spectral multiplier estimates and endpoint estimates for Bochner--Riesz means by \cite[Theorem~II.6]{ChenOuhabazSikoraYan2016} and \cite[Theorem~I.24]{ChenOuhabazSikoraYan2016}.
\begin{theorem}
Let $L$ be as in \eqref{eq:SchroedingerOperatorBV}, $d \geq 3$, and $p \in [1,2d/(d+2)]$. For each bounded Borel function $F$ such that $\sup_{t >0} \| \eta \delta_t F \|_{W^{\beta,2}} < \infty$ for some $\beta > \max\big( \{ d \big( \frac{1}{p} - \frac{1}{2} \big), \frac{1}{2} \} \big)$ and non-trivial $\eta \in C^\infty_c(0,\infty)$, $F(\sqrt{L})$ is bounded on $L^r(\R^d)$ for all $r \in (p,p')$. We find the following spectral multiplier estimate to hold:
\begin{equation*}
\| F(\sqrt{L}) \|_{r \to r} \leq C_\beta (\sup_{t >0} \| \eta \delta_t F \|_{W^{\beta,2}} + |F(0)| \big).
\end{equation*}
Furthermore, for $\max(p,p') > \frac{2d}{d-2}$, the Bochner--Riesz means $S_R^{\delta(p)}(L):L^p(\R^d) \to L^p(\R^d)$ are bounded and satisfy weak endpoint bounds uniformly in $R$ with $\delta(p)= \max(d \big| \frac{1}{2} - \frac{1}{p} \big| - \frac{1}{2}, 0)$.
\end{theorem}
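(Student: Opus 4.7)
The plan is to apply the abstract framework of Chen, Ouhabaz, Sikora, and Yan \cite{ChenOuhabazSikoraYan2016} directly to the operator $L$ defined in \eqref{eq:SchroedingerOperatorBV}. Three structural ingredients are needed: $L$ is non-negative and self-adjoint on $L^2(\R^d)$, which is immediate from the divergence-form expression; the ambient space $(\R^d,|\cdot|,dx)$ is a doubling metric measure space; and the cosine family generated by $\sqrt{L}$ has finite propagation speed, which follows from Lemma \ref{lem:FiniteSpeedPropagation} together with the representation of $\cos(t\sqrt{L})$ via the Dirac groups recorded in Section \ref{section:Preliminaries}. In addition, the Gaussian heat kernel bound \eqref{eq:HeatKernelEstimate} interpolates with the $L^2$-contractivity of $e^{-tL}$ to give the smoothing estimate \eqref{eq:SmoothingEstimate} throughout the full range $1 \leq p \leq \tfrac{2d}{d+2}$.

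The substantive analytic input, which is already assembled in the paragraph preceding the statement, is the Stein--Tomas restriction condition $(\mathrm{ST})^2_{p,2}$ for $L$. Starting from the spectral restriction estimate of Proposition \ref{prop:SpectralRestrictionSchroedingerOperator}, I would integrate $dE_{\sqrt{L}}(\lambda)$ against a bounded Borel function $F$ supported in $[0,R]$ to obtain the $L^p \to L^{p'}$ bound $C R^{d(1/p - 1/p')} \|\delta_R F\|_{L^1}$, and then upgrade the $L^1$-norm on $F$ to $L^2$ by the standard $TT^*$ duality argument. This produces $(\mathrm{ST})^2_{p,2}$ in exactly the range $p \in [1,\tfrac{2d}{d+2}]$ required by the abstract theorems, where Proposition \ref{prop:SpectralRestrictionSchroedingerOperator} in turn rests on the global dispersive estimate of Proposition \ref{prop:GlobalDispersiveEstimate}.

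With the heat kernel bound and $(\mathrm{ST})^2_{p,2}$ established, the first half of the theorem is an immediate invocation of \cite[Theorem~II.6]{ChenOuhabazSikoraYan2016}: this yields the spectral multiplier estimate on $L^r(\R^d)$ for every $r \in (p,p')$, at the regularity threshold $\beta > \max\{d(\tfrac{1}{p}-\tfrac{1}{2}),\tfrac{1}{2}\}$, together with the additive $|F(0)|$ contribution. The second half follows analogously from \cite[Theorem~I.24]{ChenOuhabazSikoraYan2016}: the range condition $\max(p,p') > \tfrac{2d}{d-2}$ is precisely dual to the Stein--Tomas endpoint $p \in [1,\tfrac{2d}{d+2}]$, and the critical Bochner--Riesz index $\delta(p) = \max(d|\tfrac{1}{2}-\tfrac{1}{p}|-\tfrac{1}{2},0)$ is the output of the abstract theorem at this level of restriction.

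The only obstacle is hypothesis-matching; no genuinely new analysis is required at this stage. The one point that merits a sentence of care is ensuring that the Gaussian heat kernel bound \eqref{eq:HeatKernelEstimate} and the finite propagation property hold in the $BV$-coefficient setting rather than only the Lipschitz setting, which is why the variational threshold $Var(\log(a_i)) < 2\pi$ is assumed. Once that is stated, the theorem is reduced to citing \cite{ChenOuhabazSikoraYan2016} verbatim.
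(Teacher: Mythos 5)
Your route is exactly the paper's: derive the Stein--Tomas condition $(\mathrm{ST})^2_{p,2}$ from Proposition~\ref{prop:SpectralRestrictionSchroedingerOperator} by integrating the spectral restriction bound and applying $TT^*$, then invoke \cite[Theorem~II.6]{ChenOuhabazSikoraYan2016} for the multiplier estimate and \cite[Theorem~I.24]{ChenOuhabazSikoraYan2016} for the endpoint Bochner--Riesz bounds, with the smoothing estimate \eqref{eq:SmoothingEstimate} supplied by the Gaussian heat kernel bound \eqref{eq:HeatKernelEstimate}. One small correction: the small-variation threshold $Var(\log(a_i))<2\pi$ is not what guarantees the Gaussian heat kernel bound or finite propagation speed --- both hold for merely bounded elliptic coefficients via the commuting one-dimensional factorization; rather, the threshold is what feeds the global dispersive estimate (Proposition~\ref{prop:GlobalDispersiveEstimate} via Theorem~\ref{thm:GlobalDispersionWaveEquation}) on which Proposition~\ref{prop:SpectralRestrictionSchroedingerOperator} rests.
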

We turn to maximal Bochner--Riesz operators $S^\alpha_*(L)$ defined by
\begin{equation}
\label{eq:MaximalBochnerRiesz}
S^\alpha_*(L) f(x) = \sup_{R > 0} |S^\alpha_R(L) f(x)|.
\end{equation}
These were explored in the general context described above by Chen \emph{et al.} in \cite{ChenLeeSikoraYan2020}. \cite[Theorem~A]{ChenLeeSikoraYan2020} yields the following:
\begin{theorem}
Let $L$ be as in \eqref{eq:SchroedingerOperatorBV}, and $d \geq 3$. Then the maximal Bochner--Riesz operator $S^\alpha_*(L)$ is bounded on $L^p(\R^d)$ provided that
\begin{equation}
\label{eq:RangeMaximalOperator}
2 \leq p < \frac{2d}{d-2}, \text{ and } \alpha > \max( d \big( \frac{1}{2} - \frac{1}{p} \big) - \frac{1}{2},0).
\end{equation}
In particular, we find pointwise convergence of the Bochner--Riesz means to hold:
\begin{equation}
\label{eq:PointwiseConvergence}
\lim_{R \to \infty} S^\alpha_R(L) f(x) = f(x) \text{ a.e.}
\end{equation}
\end{theorem}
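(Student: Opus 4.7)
The strategy is to realize the result as a direct consequence of the abstract framework of Chen--Lee--Sikora--Yan \cite{ChenLeeSikoraYan2020}, after verifying the ingredients that their Theorem A requires in our setting. The ambient metric measure space is $(\R^d, dx)$ with Euclidean distance, which is of course doubling, so the only nontrivial hypotheses concern the operator $L$ itself: a Gaussian-type heat kernel estimate and a Stein--Tomas restriction estimate of the form $(\mathrm{ST})^2_{p,2}$.

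First I would check the heat kernel input. Since $L$ factors through the commuting one-dimensional operators $L_i$, the kernel bound \eqref{eq:HeatKernelEstimate} and the companion gradient and time-derivative bounds recorded just below it hold with constants depending only on the ellipticity constants; these were established in the proof of Proposition \ref{prop:EquivalenceBesovNorms}. From this we obtain the smoothing estimate \eqref{eq:SmoothingEstimate} in the full subrange $1 \le p \le 2d/(d+2)$ by the standard $L^1 \to L^\infty$ bound on $e^{-tL}$ combined with Riesz--Thorin interpolation and $L^2$-selfadjointness. Second, I would invoke Proposition \ref{prop:SpectralRestrictionSchroedingerOperator}, which, once fed into the $TT^*$-reformulation recalled in the paragraph preceding the statement, furnishes the Stein--Tomas restriction estimate $(\mathrm{ST})^2_{p,2}$ for all $p \in [1, 2d/(d+2)]$.

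With these two ingredients in hand, the hypotheses of \cite[Theorem~A]{ChenLeeSikoraYan2020} are met. Applying that theorem to our $L$ yields the boundedness of the maximal Bochner--Riesz operator $S^\alpha_*(L)$ on $L^p(\R^d)$ in the sharp range \eqref{eq:RangeMaximalOperator}. The pointwise convergence statement \eqref{eq:PointwiseConvergence} then follows by the classical density argument: for $f$ in the dense subspace $\mathrm{Dom}(L) \cap L^p$, one has $S^\alpha_R(L) f \to f$ in $L^2$ (by the spectral theorem, since $S^\alpha_R(\lambda) \to 1$ pointwise) and in fact pointwise in $x$ up to passing to a subsequence, so a.e.\ convergence on the dense subspace can be upgraded to a.e.\ convergence on all of $L^p$ by a standard $\limsup$ argument using the maximal inequality just established.

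The only step that requires any care is the passage between the spectral restriction estimate \eqref{eq:SpectralRestrictionEstimate} and the abstract condition used in \cite{ChenLeeSikoraYan2020}, but this is merely the $TT^*$ identity already spelled out in the excerpt and so is essentially bookkeeping. The substance of the proof — namely the dispersive/restriction behavior of $L$ — is entirely packaged in Proposition \ref{prop:GlobalDispersiveEstimate} and its consequence Proposition \ref{prop:SpectralRestrictionSchroedingerOperator}, which is precisely why the global Strichartz analysis of Section \ref{section:StrichartzEstimates} is what makes the maximal Bochner--Riesz theorem available here.
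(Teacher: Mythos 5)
Your proposal matches the paper's argument: both verify the heat-kernel/Gaussian bound and the Stein--Tomas restriction estimate $(\mathrm{ST})^2_{p,2}$ (via Proposition \ref{prop:SpectralRestrictionSchroedingerOperator} and the $TT^*$ reformulation) and then apply \cite[Theorem~A]{ChenLeeSikoraYan2020} directly, with pointwise convergence following from the maximal inequality by the standard density argument. The paper records this as an immediate citation of Chen--Lee--Sikora--Yan after the groundwork already laid in this section, which is precisely the structure you describe.
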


In \cite{SikoraYanYao2018} Sikora--Yan--Yao considered Bochner--Riesz estimates of negative index and spectral multiplier estimates $L^p(\R^d) \to L^q(\R^d)$. The results were likewise obtained in the general context described above, hinging on heat kernel and Tomas--Stein restriction estimates. We consider for $\alpha > -1$
\begin{equation}
\label{eq:BochnerRieszOperatorNegativeIndex}
S^\alpha_R(\sqrt{L}) = \frac{1}{\Gamma(\alpha+1)} \big( 1 - \frac{\sqrt{L}}{R} \big)^\alpha_+.
\end{equation}
For $\alpha = -1$, we set $S_R^{-1}(\sqrt{L}) = R^{-1} dE_{\sqrt{L}}(R)$. This is based on the distributional limit (cf. \cite[Eq.~(3.2.17')]{Hoermander2003}):
\begin{equation*}
\lim_{\alpha \downarrow -1} \frac{1}{\Gamma(\alpha+1)} x_+^\alpha = \delta(x).
\end{equation*}
Let $L$ be as above \eqref{eq:SchroedingerOperatorBV}. By the pointwise Gaussian estimates noted in \eqref{eq:HeatKernelEstimate}, Davies--Gaffney estimates of second order follow
\begin{equation}
\label{eq:DGEstimates}
\| P_{B(x,t^{\frac{1}{2}})} e^{-t L} P_{B(y,t^{\frac{1}{2}})} \|_{2 \to 2} \leq C \exp ( - \frac{c|x-y|^2}{t})
\end{equation}
for all $t>0$, and $x,y \in \R^d$. $P_E f(x) = \chi_E(x) f(x)$ denotes multiplication with an indicator function. Moreover, the condition
\begin{equation}
\label{eq:Gp2}
\| e^{-t^2 L} \|_{p \to 2} \leq C t^{d\big(\frac{1}{2}-\frac{1}{p}\big)}
\end{equation}
holds for all $x \in \R^d$ and $t > 0$.\\
The Bochner--Riesz estimates of negative index investigated in \cite{SikoraYanYao2018} for operators satisfying \eqref{eq:DGEstimates} and \eqref{eq:Gp2} read
\begin{equation*}
(\text{BR}^\alpha_{p,q}) \quad \| S^\alpha_R(\sqrt{L}) \|_{p \to q} \leq C R^{d \big( \frac{1}{p} - \frac{1}{q} \big)}.
\end{equation*}
Thus, the Stein--Tomas restriction estimates \eqref{eq:SpectralRestrictionEstimate} showed in Proposition \ref{prop:SpectralRestrictionSchroedingerOperator} corresponds to $(\text{BR}^\alpha_{p,p'})$. Sikora--Yan--Yao \cite[Theorem~3.9]{SikoraYanYao2018} proved that Bochner--Riesz estimates of negative index imply the following spectral multiplier estimates:
\begin{theorem}
\label{thm:SpectralMultiplierMixedIndices}
Let $L$ be as in \eqref{eq:SchroedingerOperatorBV}. Suppose that $(\text{BR}^\alpha_{p,q})$ holds for $\alpha \geq -1$ and $1<p<q<\infty$. Let $p \leq r \leq s \leq q$, and $\beta > d \big( \frac{1}{p} - \frac{1}{r} \big) + d \big( \frac{1}{s} - \frac{1}{q} \big) + \alpha + 1$, $\text{supp}(F) \subseteq [1/4,4]$, and $F \in W^{\beta,1}(\R)$, the operator $F(t \sqrt{L})$ is bounded from $L^r(\R^d) \to L^s(\R^d)$. Moreover, the following estimate holds:
\begin{equation}
\label{eq:SpectralMultiplierEstimate}
\sup_{t > 0} t^{d\big( \frac{1}{r} - \frac{1}{s} \big)} \| F(t \sqrt{L}) \|_{r \to s} \leq C \| F \|_{W^{\beta,1}(\R)}.
\end{equation} 
\end{theorem}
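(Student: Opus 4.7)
Plan. The result is an instance of the abstract spectral multiplier theorem of Sikora--Yan--Yao \cite[Theorem~3.9]{SikoraYanYao2018} applied to the operator $L$ of \eqref{eq:SchroedingerOperatorBV}. To reduce to their setup I would first verify that $L$ fits their framework: the pointwise Gaussian heat-kernel bound \eqref{eq:HeatKernelEstimate} (which holds for $BV$-coefficients here because the one-dimensional operators $\partial_{x_i}(a_i(x_i)\partial_{x_i})$ commute in $i$ and the one-dimensional estimates of \cite{BurqPlanchon2006} tensorize) yields both the second-order Davies--Gaffney estimate \eqref{eq:DGEstimates} and the $L^p \to L^2$ off-diagonal smoothing \eqref{eq:Gp2}. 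The remaining hypothesis is $(\text{BR}^\alpha_{p,q})$, which is given.

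With these ingredients in hand, the strategy of \cite{SikoraYanYao2018} is as follows. Using the Abel-type identity
\[
F(x) = \frac{1}{\Gamma(\alpha+1)} \int_0^\infty (\lambda-x)_+^\alpha \, F^{[\alpha+1]}(\lambda) \, d\lambda,
\]
where $F^{[\alpha+1]}$ is a Marchaud--Weyl fractional derivative of $F$ of order $\alpha+1$, one expands
\[
F(t\sqrt{L}) = \int_0^\infty F^{[\alpha+1]}(\lambda) \, \lambda^\alpha \, S^\alpha_{\lambda/t}(\sqrt{L}) \, d\lambda.
\]
Apply $(\text{BR}^\alpha_{p,q})$ to bound each Bochner--Riesz factor and interpolate with the $L^{r_0} \to L^{r_0}$ estimates for $r_0 \in \{p,q\}$ (themselves coming from the spectral multiplier theorem of \cite{ChenOuhabazSikoraYan2016} already deployed in the paper) via Stein complex interpolation in the analytic family of Bochner--Riesz indices; this yields
\[
\|S^\alpha_{\lambda/t}(\sqrt{L})\|_{r \to s} \lesssim (\lambda/t)^{d(1/r - 1/s)}
\]
for $p \leq r \leq s \leq q$.

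Substituting this back and taking absolute values inside the integral gives
\[
\|F(t\sqrt{L})\|_{r \to s} \lesssim t^{-d(1/r - 1/s)} \int_0^\infty |F^{[\alpha+1]}(\lambda)| \, \lambda^{\alpha + d(1/r - 1/s)} \, d\lambda.
\]
The support hypothesis $\operatorname{supp}(F) \subseteq [1/4,4]$ localizes the integral to a compact set bounded away from zero, and a fractional Sobolev embedding converts it into $\|F\|_{W^{\beta,1}}$ once $\beta > d(1/p - 1/r) + d(1/s - 1/q) + \alpha + 1$, yielding \eqref{eq:SpectralMultiplierEstimate}. I expect the main technical obstacle to be the Stein interpolation step: one must extend the real parameter $\alpha$ to a vertical complex strip, construct the analytic family $\{S^z_R(\sqrt{L})\}$, and secure uniform bounds in the imaginary part on both edges of the strip. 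This is standard for $-\Delta$ and adapts here because $L$ admits the semigroup and spectral-theoretic tools needed (heat smoothing \eqref{eq:Gp2}, Davies--Gaffney \eqref{eq:DGEstimates}, and the Phillips functional calculus of Section \ref{section:Preliminaries}).
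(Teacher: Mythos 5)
Your proposal is correct and follows the paper's route: verify that $L$ satisfies the Sikora--Yan--Yao hypotheses (the Gaussian heat-kernel bound \eqref{eq:HeatKernelEstimate}, hence the Davies--Gaffney estimate \eqref{eq:DGEstimates} and the smoothing estimate \eqref{eq:Gp2}, plus the given $(\text{BR}^\alpha_{p,q})$), and then invoke \cite[Theorem~3.9]{SikoraYanYao2018}. The paper treats that abstract result as a black box, so your additional sketch of its internal subordination--interpolation mechanism, while a reasonable outline, is not required and is not reproduced in the paper.
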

For a second result on spectral multipliers, we recall the definition of the Weyl--Sobolev norm: The distributions
\begin{equation}
\label{eq:HomogeneousDistributions}
\chi_{\pm}^\alpha = \frac{x_{\pm}^\alpha}{\Gamma(\alpha+1)} \quad \Re \alpha > -1.
\end{equation}
can be extended to arbitrary index $\nu \in \C$ by respecting the recursion relation of the derivatives (cf. \cite[p.~3087]{SikoraYanYao2018}). For $\text{supp}(F) \subseteq [0,\infty)$, we then define the Weyl fractional derivative of $F$ of order $\nu$ by
\begin{equation*}
F^{(\nu)} = F * \chi_-^{-\nu-1}, \quad \nu \in \C
\end{equation*}
and the Weyl--Sobolev norm by
\begin{equation*}
\| F \|_{WS^{\nu,p}} = \| F \|_p + \| F^{(\nu)} \|_p.
\end{equation*}
For $1<p<\infty$ and $\nu \geq 0$, the Weyl--Sobolev norm is equivalent to the usual Sobolev norm $\| F \|_{W^{\nu,p}} \sim \| F \|_{WS^{\nu,p}}$ whereas for $p=1$ we have
\begin{equation*}
\| F \|_{WS^{\nu,1}} \leq C_\varepsilon \| F \|_{W^{\nu+\varepsilon,1}}
\end{equation*}
for any $\varepsilon > 0$ (cf. \cite[Lemma~3.7]{SikoraYanYao2018}). The following result \cite[Proposition~3.8]{SikoraYanYao2018} is not restricted to dyadically supported multipliers:
\begin{proposition}
Suppose that $(\text{BR}^\alpha_{p,q})$ holds for some $\alpha \geq -1$ and $1 \leq p < q \leq \infty$. Then, for every $\varepsilon>0$, there exists a constant $C_\varepsilon$ such that for any $R>0$ and all Borel functions $F$ with $\text{supp}(F) \subseteq [R/2,R]$, we find the estimate
\begin{equation*}
\| F(\sqrt{L}) \|_{p \to q} \leq C R^{d\big( \frac{1}{p} - \frac{1}{q} \big)} \| \delta_R F \|_{WS^{\alpha+1,1}(\R)}
\end{equation*}
to hold.
\end{proposition}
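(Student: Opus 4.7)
The plan is to derive the estimate by expressing $F$ as a one-parameter superposition of Bochner--Riesz kernels of index $\alpha$ via a Weyl fractional-integration reproducing formula, and then applying the hypothesis $(\mathrm{BR}^\alpha_{p,q})$ uniformly inside the superposition.

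First I would establish the reproducing identity
\begin{equation*}
F(\lambda) = \int_0^\infty F^{(\alpha+1)}(t)\, t^\alpha\, S_t^\alpha(\lambda)\, dt, \qquad \lambda \geq 0,
\end{equation*}
as a consequence of the convolution law $\chi_-^{-\alpha-2} * \chi_-^\alpha = \delta_0$ for the one-sided homogeneous distributions in \eqref{eq:HomogeneousDistributions}. This gives $F = F^{(\alpha+1)} * \chi_-^\alpha$, and the substitution $(t-\lambda)^\alpha = t^\alpha(1-\lambda/t)^\alpha$ isolates the Bochner--Riesz kernel $S^\alpha_t$ of \eqref{eq:BochnerRieszOperatorNegativeIndex}. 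The endpoint $\alpha=-1$ is interpreted distributionally through $\lim_{\alpha\downarrow -1}\chi_+^\alpha = \delta_0$, consistent with the convention $S_R^{-1}(\sqrt L) = R^{-1}\, dE_{\sqrt L}(R)$.

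Passing this identity to the operator side via the bounded Borel functional calculus of $\sqrt L$ and applying Minkowski's inequality, I obtain
\begin{equation*}
\|F(\sqrt L)\|_{p\to q} \leq \int_0^\infty |F^{(\alpha+1)}(t)|\, t^\alpha\, \|S_t^\alpha(\sqrt L)\|_{p\to q}\, dt.
\end{equation*}
Inserting the hypothesis $(\mathrm{BR}^\alpha_{p,q})$ then yields
\begin{equation*}
\|F(\sqrt L)\|_{p\to q} \lesssim \int_0^\infty |F^{(\alpha+1)}(t)|\, t^{\alpha + d(1/p-1/q)}\, dt.
\end{equation*}

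Since $F$ is supported in $[R/2, R]$, I would split the $t$-integral into the regions $t \in [R/2, R]$ and $t \in [0, R/2]$. On the first region $t \sim R$, so pulling out $R^{\alpha + d(1/p-1/q)}$ produces the contribution $R^{\alpha + d(1/p-1/q)} \|F^{(\alpha+1)}\|_{L^1}$. On the second region $F^{(\alpha+1)}(t)$ is the non-local tail of a Weyl fractional derivative evaluated away from the support of $F$; direct differentiation of the Weyl fractional integral representation yields the pointwise bound $|F^{(\alpha+1)}(t)| \lesssim R^{-\alpha-2}\|F\|_{L^1}$ for $t \leq R/2$, hence the corresponding contribution is bounded by $R^{d(1/p-1/q)-1}\|F\|_{L^1}$. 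Using the dilation identities $\|(\delta_R F)^{(\alpha+1)}\|_1 = R^\alpha \|F^{(\alpha+1)}\|_1$ and $\|\delta_R F\|_1 = R^{-1}\|F\|_1$ (which follow from $\chi_-^\beta(\cdot/R) = R^{-\beta}\chi_-^\beta$), both contributions reassemble into $R^{d(1/p-1/q)}\|\delta_R F\|_{WS^{\alpha+1,1}}$, which is the claimed bound.

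The main obstacle is the sharp pointwise estimate on $F^{(\alpha+1)}$ on $[0, R/2]$, i.e., outside the support of $F$, since for non-integer $\alpha+1$ one must represent $F^{(\alpha+1)}$ as $\lceil\alpha+1\rceil$ ordinary derivatives of a Weyl fractional integral of fractional order and track the constants carefully. Any borderline case where $\alpha + d(1/p-1/q) + 1$ vanishes produces a logarithmic divergence that has to be absorbed into an arbitrarily small power of $R$; this is the natural source of the $\varepsilon$-loss in the constant $C_\varepsilon$. All remaining steps are homogeneity bookkeeping under dilation.
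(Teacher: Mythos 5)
The paper does not prove this statement: it quotes it verbatim from Sikora--Yan--Yao \cite[Proposition~3.8]{SikoraYanYao2018}. Your reconstruction nonetheless follows the same subordination strategy used there (write $F(\lambda)=\int_0^\infty F^{(\alpha+1)}(t)\,t^\alpha S^\alpha_t(\lambda)\,dt$ via $F = F^{(\alpha+1)}*\chi_-^\alpha$, pass it through the Borel functional calculus, and invoke $(\mathrm{BR}^\alpha_{p,q})$ termwise), and the reproducing formula and the dilation identities $\|(\delta_R F)^{(\alpha+1)}\|_1 = R^\alpha\|F^{(\alpha+1)}\|_1$, $\|\delta_R F\|_1 = R^{-1}\|F\|_1$ are all correct. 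So the route is the right one.

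There is, however, a concrete gap in the tail estimate. You claim the pointwise bound $|F^{(\alpha+1)}(t)|\lesssim R^{-\alpha-2}\|F\|_{1}$ for all $t\leq R/2$, saying $t$ is ``away from the support of $F$.'' But $t=R/2$ is the left endpoint of the support, not away from it: for $t<R/2$ and $\alpha\notin\mathbb{Z}$ one has
\begin{equation*}
F^{(\alpha+1)}(t)=\frac{1}{\Gamma(-\alpha-1)}\int_{R/2}^{R} F(s)\,(s-t)^{-\alpha-2}\,ds,
\end{equation*}
and since $-\alpha-2<-1$ this gives $|F^{(\alpha+1)}(t)|\lesssim (R/2-t)^{-\alpha-2}\|F\|_1$, which blows up as $t\uparrow R/2$ rather than being bounded by $R^{-\alpha-2}\|F\|_1$. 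Integrating this against $t^{\alpha+d(1/p-1/q)}$ up to $R/2$ then diverges for $\alpha>-1$. The fix is standard but must be stated: split instead at $t=R/4$. On $[0,R/4]$ one has $s-t\geq R/4$, so the pointwise bound $|F^{(\alpha+1)}(t)|\lesssim R^{-\alpha-2}\|F\|_1$ does hold there and the contribution is $\lesssim R^{d(1/p-1/q)-1}\|F\|_1$. On $[R/4,R]$ one has $t\sim R$, so one uses $\|F^{(\alpha+1)}\|_{L^1}$ directly (this is finite by hypothesis and absorbs the blow-up at the boundary), giving the contribution $\lesssim R^{\alpha+d(1/p-1/q)}\|F^{(\alpha+1)}\|_1$. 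With that correction the homogeneity bookkeeping goes through exactly as you wrote. Finally, your closing diagnosis of the $\varepsilon$ is off: since $\alpha\geq-1$ and $p<q$ force $\alpha+d(1/p-1/q)+1\geq d(1/p-1/q)>0$, no logarithmic borderline case arises in this computation; the $\varepsilon$ in the statement has nothing to do with a divergent $t$-integral but with the passage from the Weyl--Sobolev norm $WS^{\alpha+1,1}$ to an ordinary Sobolev norm $W^{\alpha+1+\varepsilon,1}$ mentioned just before the proposition (and indeed, as stated with the $WS$-norm on the right, the constant does not actually depend on $\varepsilon$).
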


Moreover, Bochner--Riesz estimates of index $-1$ yield Bochner--Riesz estimates of higher order with more admissible indices (cf. \cite[Theorem~3.12]{SikoraYanYao2018}). We omit the details.

\section*{Data availability statement}

Data sharing not applicable to this article as no datasets were generated or analysed during the current study.

\section*{Acknowledgements}
Funded by the Deutsche Forschungsgemeinschaft (DFG, German Research Foundation)  Project-ID 258734477 – SFB 1173.

\end{document}